\newtheorem{theorem}{Theorem}[section]
\newtheorem{definition}[theorem]{Definition}
\newtheorem{remark}[theorem]{Remark}
\newtheorem{example}[theorem]{Example}
\newtheorem{lemma}[theorem]{Lemma}
\newtheorem{proposition}[theorem]{Proposition}
\newtheorem{corollary}[theorem]{Corollary}
\numberwithin{equation}{section}
\def\Xint#1{\mathchoice
   {\XXint\displaystyle\textstyle{#1}}%
   {\XXint\textstyle\scriptstyle{#1}}%
   {\XXint\scriptstyle\scriptscriptstyle{#1}}%
   {\XXint\scriptscriptstyle\scriptscriptstyle{#1}}%
   \!\int}
\def\XXint#1#2#3{{\setbox0=\hbox{$#1{#2#3}{\int}$}
     \vcenter{\hbox{$#2#3$}}\kern-.5\wd0}}
\def\dashint{\Xint-}
\newcommand{\twopartdef}[4]
{
\left\{
		\begin{array}{ll}
			#1 & #2 \\
			#3 & #4
		\end{array}
	\right.
}
\newcommand{\threepartdef}[6]
{
	\left\{
		\begin{array}{lll}
			#1 & #2 \\
			#3 & #4 \\
			#5 & #6
		\end{array}
	\right.
}
\author{Wojciech G\'{o}rny}
\address{W. G\'{o}rny: Faculty of Mathematics, Informatics and Mechanics, University of Warsaw, Warsaw, Poland.}
\email{w.gorny@mimuw.edu.pl}
\date{\today}
\subjclass[2010]{35J20, 35J25, 35J75, 35J92}
\title[LGP with trace imposed on a part of the boundary]{Least gradient problem with Dirichlet condition imposed on a part of the boundary}
\keywords{Least Gradient Problem, 1-Laplacian, Duality, Anisotropy}
\begin{document}

\begin{abstract}
We provide an analysis of the least gradient problem in the case when the boundary datum is only imposed on a part of the boundary. First, we give a characterisation of solutions in a general setting using convex duality theory. Then, we discuss the way in which solutions attain their boundary values, structure of solutions and their regularity. 
\end{abstract}

\maketitle

\section{Introduction}

In this paper, we study a variant of the least gradient problem. In the last few years, this problem and its anisotropic formulation attracted a lot of attention, see for instance \cites{DS,GRS2017NA,JMN,HaM,HKLS,MRL,Mor,RS,Zun}. The standard version of the least gradient problem may be stated as follows:
\begin{equation}\label{LGP}\tag{LGP}
\min \bigg\{ \int_\Omega |Du|: \quad u \in BV(\Omega), \quad u|_{\partial\Omega} = f \bigg\}.
\end{equation}
The boundary datum is understood as the trace of a BV function. This problem was first considered in \cite{SWZ}, where the authors viewed it as primarily as a problem in geometric measure theory; it was studied under strict geometric conditions on $\Omega$ and the focus was on the relationship between problem \eqref{LGP} and the study of minimal surfaces (see also \cite{BGG}). The authors established that for continuous boundary data, if $\Omega \subset \mathbb{R}^N$ is an open bounded convex set, a unique solution exists and it is continuous up to the boundary. 

The main focus of this paper is the following variant of problem \eqref{LGP}:
\begin{equation}\label{problem}\tag{$\Gamma$-LGP}
\min \bigg\{ \int_\Omega |Du|_\phi: \quad u \in BV(\Omega), \quad u|_{\Gamma} = f \bigg\}.
\end{equation}
Here, $\Gamma$ is a relatively open subset of $\partial\Omega$, and the total variation is calculated with respect to the anisotropy given by the function $\phi$. The motivations to study such a problem are twofold. Firstly, on convex domains in two dimensions the problem \eqref{problem} (when $\phi$ is the Euclidean norm) is related to the problem appearing in {\it free material design}, see \cites{CL,KoZ}:
\begin{equation*}
\min \bigg\{ \int_{\overline{\Omega}} |p|: \quad p \in \mathcal{M}(\overline{\Omega}, \mathbb{R}^2), \quad \mathrm{div}(p) = 0,  \quad p \cdot \nu^\Omega|_{\Gamma} = g \bigg\},
\end{equation*}
where $g = \frac{\partial f}{\partial \tau}$ is the tangential derivative of $f$, see \cite{GRS2017NA}. The problem, first considered in \cite{GRS2017NA} in the planar and isotropic case, originates from mechanics; given a domain $\Omega$ and loads on the boundary (typically point loads), the goal is to find an elastic body which can support these loads and is as stiff as possible. Moreover, in this context it is natural to consider anisotropic norms, since in topology optimisation problems they correspond to using composite materials with anisotropic properties, see \cite{BeS}. An analogous problem in plastic design and its relationship to a constrained version of the least gradient problem was first studied by Kohn and Strang in \cite{KS}. Finally, let us note that when $\Gamma = \partial\Omega$, the free material design problem is also called the Beckmann problem and it is equivalent to the optimal transport problem, where the source and target measures are located on $\partial\Omega$:
\begin{equation*}
\min \bigg\{ \int_{\overline{\Omega} \times \overline{\Omega}} d\gamma: \, \gamma \in \mathcal{M}^+(\overline{\Omega}  \times \overline{\Omega}), \, (\Pi_x)_{\#}\gamma = g^+, \, (\Pi_y)_{\#} \gamma = g^- \bigg\},
\end{equation*}
where $g^+$ and $g^-$ are the positive and negative parts of $g$ respectively, see \cite{San2015}. 

The main motivation to consider anisotropic cases of the least gradient problem comes from medical imaging. Such a problem including a positive weight arises as a dimensional reduction of the conductivity imaging problem, see for instance \cite{JMN}. It is an inverse problem, where given a body $\Omega$, a measurement of the voltage $f$ on its boundary and a measurement of the current density $|J|$ inside the body we want to recover the (isotropic) conductivity $\sigma$. Denote by $u$ the electrical potential corresponding to the voltage $f$; then, it formally satisfies the equation
\begin{equation*}
\twopartdef{-\mathrm{div}(\sigma \nabla u) = 0}{\mbox{in } \Omega}{u = f}{\mbox{on } \partial\Omega.}
\end{equation*}
Because by Ohm's law the current density equals $J = - \sigma \nabla u$, the above equation can be formally rewritten as the weighted 1-Laplace equation
\begin{equation*}
\twopartdef{-\mathrm{div}(|J| \frac{\nabla u}{|\nabla u|}) = 0}{\mbox{in } \Omega}{u = f}{\mbox{on } \partial\Omega.}
\end{equation*}
The 1-Laplace equation is the Euler-Lagrange equation for the least gradient problem and this relationship extends to anisotropic cases (see \cites{Maz,MRL}), so the above equation is formally equivalent to the weighted least gradient problem with weight $a = |J|$:
\begin{equation}\label{eq:weightedleastgradientproblem}\tag{wLGP}
\min \bigg\{ \int_\Omega a(x) |Du|, \quad u \in BV(\Omega), \quad u|_{\partial\Omega} = f \bigg\}.
\end{equation}
The passage from the conductivity imaging problem to the weighted least gradient problem was presented here only on a formal level, but it was justified for $u \in W^{1,1}(\Omega) \cap C(\overline{\Omega})$ in \cite{MNT} and later for $u \in BV(\Omega)$ in \cite{MNT}.

This paper has three main objectives. The first one is to study a relaxed version of problem \eqref{problem}. The need to introduce a relaxed version can already be seen in the isotropic least gradient problem \eqref{LGP}, because even when $\Omega$ is a two-dimensional disk, there exist boundary data $f \in L^\infty(\partial\Omega)$ such that the least gradient problem (with boundary condition understood as the trace of a BV function) admits no solutions. We introduce the relaxed problem, which in particular involves a weaker form of the boundary condition, prove existence of minimisers and provide an Euler-Lagrange type characterisation of the solutions inspired by the characterisation given for the isotropic least gradient problem by Maz\'on, Rossi and Segura de Le\'on in \cite{MRL}. The method used in \cite{MRL} involved approximations by solutions to the $p$-Laplace equation as $p \rightarrow 1$; here, we use a different (and perhaps easier to generalise) method based on convex duality. Moreover, we prove that all solutions share the same frame of superlevel sets. This is done in Section \ref{sec:relaxedformulation}; the main result is Theorem \ref{thm:characterisation}.

The second goal is to study in more detail the way in which the boundary datum is attained. This part is inspired by the results of Jerrard, Moradifam and Nachman (\cite{JMN}). There, in the case when $\Gamma = \partial\Omega$, the authors prove that under a geometric assumption on $\Omega$ called the {\it barrier condition}, which is a generalisation of strict convexity to anisotropic cases, minimisers of the relaxed problem are minimisers of the original problem \eqref{problem}.
Here, we give a generalisation of this condition in Definition \ref{def:barrier}, and use it to recover the same implication when $\Gamma \neq \partial\Omega$. This is done in the first part of Section \ref{sec:strongformulation}; the main result is Theorem \ref{thm:existencetracesense}.

The third and final goal is to study regularity and structure of solutions. When $\Gamma = \partial\Omega$, in the isotropic case or when $\phi$ is regular enough (various sufficient conditions have been given in \cite{JMN} and \cite{Zun}), solutions to problem \eqref{problem} with continuous boundary data are continuous in $\overline{\Omega}$. Moreover, H\"older continuity of boundary data implies H\"older continuity of solutions with a smaller exponent. The situation is different when $\Gamma \neq \partial\Omega$. Then, it is natural for discontinuities to form: even in the isotropic case, solutions for continuous boundary data are not necessarily continuous in $\overline{\Omega}$. We show this in an extended series of examples which highlight different ways in which regularity of solutions may break down. 
It turns out that in order to have continuity of solutions inside $\Omega$, we need to assume that $\Gamma$ is connected, and even under this assumption we cannot hope for more than continuity of solutions in $\Omega \cup \Gamma$. Such a result for regularity of solutions is proved under the assumption that we have a maximum principle for $\phi$-minimal surfaces, which holds for instance if the anisotropy is given by a sufficiently regular weight or by a strictly convex norm in two dimensions. A similar discussion to the above is given to uniqueness and structure of solutions. This is done in the second part of Section \ref{sec:strongformulation}; the main results are Theorems \ref{thm:continuitynorms} and \ref{thm:continuityweights}.

\section{Preliminaries}\label{sec:preliminaries}

\subsection{Anisotropic BV spaces}

We start by recalling the notion of anisotropic BV spaces introduced in \cite{AB} and listing a few of their properties. A particular attention is given to properties involving traces of BV functions. There are various notations for the trace of a function $u \in BV(\Omega)$ on $\partial\Omega$ in the literature, such as $Tu$ or $\gamma u$, but in the whole paper we will simply denote it by $u$. Whenever there may be confusion we give an additional comment specifying if we mean a function (in $BV(\Omega)$) or its trace (in $L^1(\partial\Omega)$).

\begin{definition}
Let $\Omega \subset \mathbb{R}^N$ be an open bounded set with Lipschitz boundary. A continuous function $\phi: \overline{\Omega} \times \mathbb{R}^N \rightarrow [0, \infty)$ is called a metric integrand, if it satisfies the following conditions:  \\
$(1)$ $\phi$ is convex with respect to the second variable for a.e. $x \in \overline{\Omega}$; \\
$(2)$ $\phi$ is {1-}homogeneous with respect to the second variable, i.e.
\begin{equation*}
\forall \, x \in \overline{\Omega}, \quad \forall \, \xi \in \mathbb{R}^N, \quad \forall \, t \in \mathbb{R} \quad \phi(x, t \xi) = |t| \phi(x, \xi);
\end{equation*}
$(3)$ $\phi$ is comparable to the Euclidean norm on $\overline{\Omega}$, i.e.
\begin{equation*}
\exists \,  \lambda, \Lambda  > 0 \quad \forall \, x \in \overline{\Omega}, \quad \forall \, \xi \in \mathbb{R}^N \quad \lambda |\xi| \leq \phi(x, \xi) \leq \Lambda |\xi|.
\end{equation*}
In particular, $\phi$ is uniformly elliptic in $\overline{\Omega}$.
\end{definition}

In the context of least gradient problems, these conditions apply to most cases considered in the literature. The typical forms of $\phi$ include: $\phi(x, \xi) = |\xi|$ (the classical least gradient problem, see \cites{GRS2017NA,MRL,SWZ}); $\phi(x, \xi) = g(x) |\xi|$ with $g$ continuous and bounded away from $0$ (the weighted least gradient problem, see \cites{JMN,Zun}); $\phi(x, \xi) = \| \xi \|_p$, where $p \in [1, \infty]$ $($anisotropy defined by the $l_p$ norms, see \cites{Gor2018CVPDE,Gor2020NA}$)$.

\begin{definition}
The polar function of $\phi$ is $\phi^0: \overline{\Omega} \times \mathbb{R}^N \rightarrow [0, \infty)$ defined by the formula
\begin{equation*}
\phi^0 (x, \xi^*) = \sup \, \{ \langle \xi^*, \xi \rangle : \, \xi \in \mathbb{R}^N, \, \phi(x, \xi) \leq 1 \}.
\end{equation*} 
\end{definition}

\begin{definition}
Let $\phi$ be a {continuous metric integrand in} $\overline{\Omega}$. For a given function $u \in L^1(\Omega)$ we define its $\phi-$total variation in $\Omega$ by the formula:
\begin{equation*}
\int_\Omega |Du|_\phi = \sup \, \bigg\{ \int_\Omega u \, \mathrm{div}(\mathbf{z}) \, dx : \, \phi^0(x,\mathbf{z}(x)) \leq 1 \, \, \, \text{a.e.}, \quad \mathbf{z} \in C_c^1(\Omega) \bigg\}.
\end{equation*}
The $\phi-$total variation is also sometimes denoted $\int_\Omega \phi(x, Du)$. We will say that $u \in BV_\phi(\Omega)$ if its $\phi-$total variation in $\Omega$ is finite; furthermore, we define the $\phi-$perimeter of a set $E$ by the formula
$$P_\phi(E, \Omega) = \int_{\Omega} |D\chi_E|_\phi.$$
If $P_\phi(E, \Omega) < \infty$, we say that $E$ is a set of bounded $\phi-$perimeter in $\Omega$.
\end{definition}

This definition is very similar to the definition of standard BV spaces; the only difference is that the bound on the length of the vector field is expressed in terms of the polar norm of $\phi$. The properties of metric integrands ensure that $BV(\Omega) = BV_\phi(\Omega)$ as sets, equipped with different (but equivalent) topologies, and that many properties of isotropic BV spaces can be recovered. We are primarily concerned with approximation by smooth functions and a version of the Gagliardo extension theorem; in the form presented below they were proved in \cite{Moll}.

\begin{lemma}\label{lem:anisotropicstrictapproximation}
Given $u \in BV(\Omega)$, there exists a sequence $w_n \in W^{1,1}(\Omega)$ such that $w_n \rightarrow u$ in $L^1(\Omega)$, $w_n = f$ on $\partial\Omega$ and
\begin{equation}
\int_\Omega |Du|_\phi = \lim_{n \rightarrow \infty} \int_\Omega \phi(x, \nabla w_n(x)) \, dx.
\end{equation}
\end{lemma}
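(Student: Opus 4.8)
The plan is to reduce the statement to a mollification argument by first extending $u$ across $\partial\Omega$ without creating a jump, and then to repair the boundary trace \emph{exactly} with the help of the Gagliardo extension theorem from \cite{Moll}.

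First I would extend $u$ to $\tilde u \in BV(\mathbb{R}^N)$. Fix a bounded open set $U \supset \overline{\Omega}$; since $\partial\Omega$ is Lipschitz, the Gagliardo extension theorem yields $v \in W^{1,1}(U \setminus \overline{\Omega})$ whose trace on $\partial\Omega$ equals $f$, and I extend $v$ by $0$ outside $U$. Setting $\tilde u = u$ in $\Omega$ and $\tilde u = v$ in $\mathbb{R}^N \setminus \Omega$, the inner and outer traces on $\partial\Omega$ coincide (both equal $f$), so the jump part of $D\tilde u$ on $\partial\Omega$ vanishes and $|D\tilde u|_\phi(\partial\Omega) = 0$ (here I use that $\phi$ is comparable to the Euclidean norm, so $|D\tilde u|_\phi$ and $|D\tilde u|$ share their null sets on $\partial\Omega$).

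Next I mollify: with a standard mollifier $\rho_\varepsilon$, set $w_\varepsilon := (\tilde u * \rho_\varepsilon)|_\Omega \in C^\infty(\overline{\Omega}) \subset W^{1,1}(\Omega)$, so that $w_\varepsilon \to u$ in $L^1(\Omega)$. From the convolution estimate $\int_\Omega \phi(x, \nabla w_\varepsilon)\,dx \le |D\tilde u|_\phi(\Omega^\varepsilon) + o(1)$, where $\Omega^\varepsilon$ is the $\varepsilon$-neighbourhood of $\Omega$ and the $o(1)$ error comes from the modulus of continuity of $\phi$ in its first variable, together with $|D\tilde u|_\phi(\Omega^\varepsilon) \downarrow |D\tilde u|_\phi(\overline{\Omega}) = \int_\Omega |Du|_\phi$ (using the previous paragraph), I obtain $\limsup_\varepsilon \int_\Omega \phi(x, \nabla w_\varepsilon)\,dx \le \int_\Omega |Du|_\phi$; lower semicontinuity of the $\phi$-total variation under $L^1$ convergence (immediate from its dual definition) gives the reverse inequality. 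Running the same argument for the Euclidean total variation shows $w_\varepsilon \to u$ strictly in $BV(\Omega)$, whence the traces $f_\varepsilon := w_\varepsilon|_{\partial\Omega}$ converge to $f$ in $L^1(\partial\Omega)$ by continuity of the trace under strict convergence.

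Finally I correct the trace exactly. Applying the Gagliardo extension theorem to the defect $f - f_\varepsilon \in L^1(\partial\Omega)$ produces $H_\varepsilon \in W^{1,1}(\Omega)$ with trace $f - f_\varepsilon$ and $\int_\Omega \phi(x, \nabla H_\varepsilon)\,dx + \|H_\varepsilon\|_{L^1(\Omega)} \le C \|f - f_\varepsilon\|_{L^1(\partial\Omega)} \to 0$. Then $w_n := w_{1/n} + H_{1/n}$ has trace $f_{1/n} + (f - f_{1/n}) = f$, converges to $u$ in $L^1(\Omega)$, and by subadditivity of $\phi(x,\cdot)$ satisfies $\int_\Omega \phi(x, \nabla w_n) \le \int_\Omega \phi(x, \nabla w_{1/n}) + \int_\Omega \phi(x, \nabla H_{1/n}) \to \int_\Omega |Du|_\phi$, with the matching lower bound again from lower semicontinuity. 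The main obstacle is precisely this last step: bare mollification only makes the trace converge to $f$, and forcing equality while preserving convergence of the $\phi$-total variation is exactly what the quantitative Gagliardo bound — controlling the whole $W^{1,1}$-norm, gradient included, by the $L^1(\partial\Omega)$-norm of the boundary defect — is there to provide. The remaining ingredients, the no-jump extension and the mollification estimates, are routine once one observes that $|D\tilde u|_\phi$ charges no mass on $\partial\Omega$.
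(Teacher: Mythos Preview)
The paper does not supply its own proof of this lemma; it is quoted from \cite{Moll} (see the sentence immediately preceding the lemma). So there is no in-paper argument to compare against. Your sketch is a correct and standard route to the result, and it is self-contained once one accepts the integral representation $|D\tilde u|_\phi = \phi(\cdot,\nu_{\tilde u})\,|D\tilde u|$ for continuous metric integrands (Amar--Bellettini), which is what makes the mollification estimate with the $o(1)$ continuity error go through. The trace-repair step is exactly the mechanism encoded in Lemma~\ref{lem:anisotropicgagliardo}; note that that lemma gives the $\phi$-gradient bound directly, while the $L^1$ smallness of $H_\varepsilon$ follows from its support shrinking to $\partial\Omega$ together with the $BV$ (hence $L^{N/(N-1)}$) bound. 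One cosmetic remark: in the paper's statement the symbol $f$ denotes the trace of $u$ (cf.\ the convention explained at the start of Section~\ref{sec:preliminaries} and the use of the lemma in the proof of Theorem~\ref{thm:characterisation}); your reading is the intended one.
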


\begin{lemma}\label{lem:anisotropicgagliardo}
Given $g \in L^1(\partial\Omega)$, there exists a sequence $v_n \in W^{1,1}(\Omega)$ such that $v_n = g$ on $\partial\Omega$, $v_n(x) = 0$ if $\mbox{dist}(x, \partial\Omega) > \frac{1}{n}$ and
\begin{equation}
\int_\Omega |Dv_n|_\phi \leq \int_{\partial\Omega} \phi(x,\nu^\Omega) \, |g| \, d\mathcal{H}^{N-1} + \frac{1}{n}.
\end{equation}
\end{lemma}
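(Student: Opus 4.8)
The plan is to reduce the statement to a purely local construction near $\partial\Omega$ in which the gradient of the competitor points essentially in the normal direction, so that the sharp constant $\phi(x,\nu^\Omega)|g|$ emerges from the $1$-homogeneity of $\phi$. Since $\partial\Omega$ is Lipschitz, I would cover it by finitely many open sets $U_1,\dots,U_m$ in which, after a rotation, $\Omega\cap U_i=\{(y',y_N):y_N>\gamma_i(y')\}$ for a Lipschitz $\gamma_i$, and fix a subordinate partition of unity $\{\theta_i\}$. Writing $g=\sum_i\theta_i g$, it suffices to extend each piece separately and add, so the problem localizes to extending a boundary datum supported in a single graph chart.

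I would first treat Lipschitz boundary data $h$ in such a chart. Setting $\tilde h(y')=h(y',\gamma_i(y'))$ and choosing a monotone cutoff $\eta$ with $\eta(0)=1$ and $\operatorname{supp}\eta\subset[0,1]$, I define
\[
v(y',y_N)=\tilde h(y')\,\eta\!\Big(\tfrac{y_N-\gamma_i(y')}{\epsilon}\Big).
\]
This $v$ is Lipschitz (hence in $W^{1,1}$), vanishes once $y_N-\gamma_i(y')\ge\epsilon$, and has trace $\tilde h$ on the graph; since the foot point $(y',\gamma_i(y'))$ lies at Euclidean distance $y_N-\gamma_i(y')$ from $(y',y_N)$, the support sits inside $\{\mathrm{dist}(\cdot,\partial\Omega)<\epsilon\}$. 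A direct computation gives $\nabla v=\tilde h\,\eta'(\cdot)\epsilon^{-1}(-\nabla\gamma_i,1)+(\eta(\cdot)\nabla_{y'}\tilde h,0)$, whose first term is parallel to the inward normal via $(-\nabla\gamma_i,1)=\sqrt{1+|\nabla\gamma_i|^2}\,(-\nu^\Omega)$, so by $1$-homogeneity its $\phi$-value equals $|\tilde h|\,|\eta'|\epsilon^{-1}\sqrt{1+|\nabla\gamma_i|^2}\,\phi(x,\nu^\Omega)$.

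Using subadditivity of $\phi(x,\cdot)$ and changing variables $t=y_N-\gamma_i(y')$, the normal part integrates to
\[
\int_{y'}|\tilde h(y')|\sqrt{1+|\nabla\gamma_i|^2}\Big(\int_0^\infty|\eta'(t/\epsilon)|\epsilon^{-1}\,\phi\big((y',\gamma_i+t),\nu^\Omega\big)\,dt\Big)dy',
\]
which, since $\int_0^\infty|\eta'(t/\epsilon)|\epsilon^{-1}\,dt=1$ and $\phi$ is continuous in $x$, converges as $\epsilon\to0$ to $\int_{\partial\Omega\cap U_i}\phi(x,\nu^\Omega)|h|\,d\mathcal H^{N-1}$ via $d\mathcal H^{N-1}=\sqrt{1+|\nabla\gamma_i|^2}\,dy'$. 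The tangential part is bounded by $\Lambda\int_{y'}\int_0^\epsilon\eta(t/\epsilon)|\nabla_{y'}\tilde h|\,dt\,dy'\le C\epsilon\|\nabla\tilde h\|_{L^1}\to0$; here I use crucially that $h$ is Lipschitz. Summing over $i$ (using $\theta_i\ge0$, $\sum_i\theta_i=1$, so $\sum_i\phi(x,\nu^\Omega)\theta_i|h|=\phi(x,\nu^\Omega)|h|$) produces, for Lipschitz $h$ and any $\delta>0$, a single $V\in W^{1,1}(\Omega)$ with trace $h$, support in $\{\mathrm{dist}<\delta\}$, $\int_\Omega|DV|_\phi\le\int_{\partial\Omega}\phi(x,\nu^\Omega)|h|\,d\mathcal H^{N-1}+\delta$, and $\|V\|_{L^1(\Omega)}\le C\delta\|h\|_{L^1(\partial\Omega)}$.

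Finally I would pass from Lipschitz to arbitrary $g\in L^1(\partial\Omega)$ by a telescoping argument that keeps the trace exact. Choose Lipschitz $g_k\to g$ in $L^1(\partial\Omega)$ with $\|g_k-g\|_{L^1}\le\eta2^{-k}$, set $h_0=g_1$ and $h_k=g_{k+1}-g_k$, and let $V_k$ be the extension of $h_k$ from the previous step, with widths $\delta_k\to0$ (all $\le\tfrac1n$) and errors $\delta_k$ chosen so that $\sum_k\delta_k$ is small and $\sum_k\|V_k\|_{L^1}<\infty$. Then $v_n:=\sum_kV_k$ converges in $W^{1,1}(\Omega)$, is supported in $\{\mathrm{dist}<\tfrac1n\}$, and by boundedness of the trace operator its trace is $\sum_k h_k=g$. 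By subadditivity, $\int_\Omega|Dv_n|_\phi\le\sum_k\int_\Omega|DV_k|_\phi\le\int_{\partial\Omega}\phi(x,\nu^\Omega)|g_1|\,d\mathcal H^{N-1}+\Lambda\sum_{k\ge1}\|h_k\|_{L^1}+\sum_k\delta_k$, and since $\int_{\partial\Omega}\phi(x,\nu^\Omega)|g_1|\le\int_{\partial\Omega}\phi(x,\nu^\Omega)|g|+\Lambda\|g_1-g\|_{L^1}$, choosing $\eta$ and the $\delta_k$ small makes the total error below $\tfrac1n$. The main obstacle is exactly this combination: forcing the gradient to concentrate along $\nu^\Omega$, which is needed for the sharp anisotropic constant and is the reason a generic isotropic extension does not suffice, while simultaneously attaining the exact trace of a merely integrable datum. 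The telescoping series with geometrically shrinking supports and errors is the device that reconciles these two requirements.
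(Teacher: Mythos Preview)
The paper does not supply its own proof of this lemma; it is quoted from \cite{Moll} without argument, so there is nothing in the paper to compare against directly. Your construction is correct and is exactly the standard route for the anisotropic Gagliardo extension: localize to Lipschitz graph charts, extend Lipschitz data by a normal-direction cutoff so that the dominant part of the gradient is parallel to $\nu^\Omega$ and the $1$-homogeneity of $\phi$ produces the sharp factor $\phi(x,\nu^\Omega)|g|$, control the tangential remainder by the chart width $\epsilon$, and then pass to arbitrary $g\in L^1(\partial\Omega)$ by a telescoping series with shrinking supports. This is essentially the argument one finds in Moll's paper (and, in the isotropic case, already in Giusti and Anzellotti--Giaquinta), so your proof may be regarded as a faithful reconstruction of the cited result.
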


\subsection{Anzelotti pairings}

Now, we recall the definition and basic properties of Anzelotti pairings introduced in \cite{Anz}; we follow the presentation of this subject in \cite{CFM} (in the isotropic case a good introduction can be found in Appendix C to \cite{ACM}). Suppose that $\Omega \subset \mathbb{R}^N$ is an open bounded set with Lipschitz boundary. For $p \geq 1$, denote
\begin{equation*}
X_p(\Omega) = \bigg\{ \mathbf{z} \in L^\infty(\Omega; \mathbb{R}^N): \, \mbox{ div}(\mathbf{z}) \in L^p(\Omega) \bigg\}.
\end{equation*}
Given $\mathbf{z} \in X_N(\Omega)$ and $w \in BV(\Omega) \subset L^{N/(N-1)}(\Omega)$, we define the functional $(\mathbf{z}, Dw): C_c^\infty(\Omega) \rightarrow \mathbb{R}$ by the formula
\begin{equation*}
\langle (\mathbf{z}, Dw), \varphi \rangle = - \int_\Omega w \, \varphi \, \mathrm{div}(\mathbf{z}) \, dx - \int_\Omega w \, \mathbf{z} \cdot \nabla \varphi \, dx.
\end{equation*}
The distribution $(\mathbf{z}, Dw)$ turns out to be a Radon measure on $\Omega$. It generalises the pointwise product $\mathbf{z} \cdot \nabla w$ to $BV(\Omega)$, namely for $w \in W^{1,1}(\Omega) \cap L^\infty(\Omega)$ we have
\begin{equation*}
\int_\Omega (\mathbf{z}, Dw) = \int_{\Omega} \mathbf{z} \cdot \nabla w \, dx \quad \forall \, w \in W^{1,1}(\Omega).
\end{equation*}
The following Proposition summarises the most important properties of the pairing $(\mathbf{z}, Du)$.

\begin{proposition}\label{prop:boundonanzelottipairing}
Suppose that $\Omega \subset \mathbb{R}^N$ is an open bounded set with Lipschitz boundary. Suppose that $\phi$ is a metric integrand. Let $\mathbf{z} \in X_N(\Omega)$ and $u \in BV(\Omega)$. Then, for any Borel set $B \subset \Omega$ we have
\begin{equation*}
\bigg| \int_{B} (\mathbf{z}, Du) \bigg| \leq \| \phi^0(x,\mathbf{z}(x)) \|_{L^\infty(\Omega)} \int_B |Du|_\phi,
\end{equation*}
in particular $(\mathbf{z}, Du) \ll |Du|$ as measures in $\Omega$.

Moreover, there exists a function $[\mathbf{z},\nu^\Omega] \in L^\infty(\partial\Omega)$ such that $\| [\mathbf{z},\nu^\Omega] \|_{L^\infty(\partial\Omega)} \leq  \| \mathbf{z} \|_{L^\infty(\Omega; \mathbb{R}^N)}$ and the following {\it Green's formula} holds:
\begin{equation*}
\int_\Omega u \, \mathrm{div}(\mathbf{z}) \, dx + \int_\Omega (\mathbf{z}, Du) = \int_{\partial\Omega} [\mathbf{z}, \nu^\Omega] \, u \, d\mathcal{H}^{N-1}.
\end{equation*}
\end{proposition}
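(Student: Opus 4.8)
The plan is to treat the two assertions in turn, the guiding idea throughout being to reduce to Sobolev functions, where the pairing $(\mathbf{z}, Dw)$ is the genuine pointwise product $\mathbf{z} \cdot \nabla w \, dx$, and then to pass to the limit via the approximation results of Lemmas \ref{lem:anisotropicstrictapproximation} and \ref{lem:anisotropicgagliardo}. The measure bound comes from an anisotropic Cauchy--Schwarz inequality plus strict approximation; Green's formula comes from showing that a certain functional depends on $u$ only through its trace, and then representing it by $L^1(\partial\Omega)^* = L^\infty(\partial\Omega)$.

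For the bound, the starting point is that by the very definition of the polar function $\langle \mathbf{z}(x), \xi \rangle \leq \phi^0(x, \mathbf{z}(x)) \, \phi(x, \xi)$ for every $\xi$, and by $1$-homogeneity this upgrades to $|\langle \mathbf{z}(x), \xi \rangle| \leq \phi^0(x, \mathbf{z}(x)) \, \phi(x, \xi)$. Hence for $w \in W^{1,1}(\Omega)$ one has the pointwise estimate $|\mathbf{z} \cdot \nabla w| \leq \| \phi^0(\cdot, \mathbf{z}) \|_{L^\infty(\Omega)} \, \phi(x, \nabla w)$. I would take $u \in BV(\Omega)$ and a strictly approximating sequence $w_n \in W^{1,1}(\Omega)$ from Lemma \ref{lem:anisotropicstrictapproximation}, so that $w_n \to u$ in $L^1(\Omega)$ and, the total masses converging, $|Dw_n|_\phi \rightharpoonup |Du|_\phi$ weakly-$*$ as measures. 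Testing the distributional definition against $\psi \in C_c^\infty(\Omega)$ and using that $w_n$ is bounded in $BV(\Omega) \hookrightarrow L^{N/(N-1)}(\Omega)$ while $\mathrm{div}(\mathbf{z}) \in L^N(\Omega)$ gives $\langle (\mathbf{z}, Du), \psi \rangle = \lim_n \int_\Omega \psi \, \mathbf{z} \cdot \nabla w_n \, dx$. Therefore, for any $\varphi \in C_c^\infty(\Omega)$ with $\varphi \geq 0$ and any $\psi \in C_c^\infty(\Omega)$ with $|\psi| \leq \varphi$,
\begin{align*}
\Big| \int_\Omega \psi \, (\mathbf{z}, Du) \Big| &= \lim_{n \to \infty} \Big| \int_\Omega \psi \, \mathbf{z} \cdot \nabla w_n \, dx \Big| \\
&\leq \| \phi^0(\cdot, \mathbf{z}) \|_{L^\infty(\Omega)} \lim_{n \to \infty} \int_\Omega \varphi \, |Dw_n|_\phi = \| \phi^0(\cdot, \mathbf{z}) \|_{L^\infty(\Omega)} \int_\Omega \varphi \, |Du|_\phi.
\end{align*}
Taking the supremum over such $\psi$ yields $\int_\Omega \varphi \, d|(\mathbf{z}, Du)| \leq \| \phi^0(\cdot, \mathbf{z}) \|_{L^\infty(\Omega)} \int_\Omega \varphi \, |Du|_\phi$ for all nonnegative $\varphi \in C_c^\infty(\Omega)$, which is exactly the inequality $|(\mathbf{z}, Du)| \leq \| \phi^0(\cdot, \mathbf{z}) \|_{L^\infty(\Omega)} \, |Du|_\phi$ between Radon measures; restricting to a Borel set $B$ gives the first claim, and absolute continuity with respect to $|Du|$ follows since $|Du|_\phi \leq \Lambda |Du|$.

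For Green's formula, I would show that $\ell(w) := \int_\Omega w \, \mathrm{div}(\mathbf{z}) \, dx + \int_\Omega (\mathbf{z}, Dw)$ depends on $w \in BV(\Omega)$ only through its trace $w|_{\partial\Omega}$. The crucial point is that $\ell(w) = 0$ whenever $w|_{\partial\Omega} = 0$: such a $w$ can be approximated by $\varphi_k \in C_c^\infty(\Omega)$ with $\varphi_k \to w$ in $L^1(\Omega)$ and with no loss of total variation mass at $\partial\Omega$, and for compactly supported $\varphi_k$ one has trivially $\ell(\varphi_k) = \int_\Omega \mathrm{div}(\varphi_k \mathbf{z}) \, dx = 0$. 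Passing to the limit, the first term of $\ell$ converges by the $L^N$--$L^{N/(N-1)}$ duality and the second by tightness of $|D\varphi_k|_\phi$ near the boundary together with the measure bound just proved, so $\ell(w) = 0$. Since the trace operator $T \colon BV(\Omega) \to L^1(\partial\Omega)$ is onto by the Gagliardo theorem, $\ell$ then factors as $\ell(w) = L(w|_{\partial\Omega})$ for a well-defined linear functional $L$ on $L^1(\partial\Omega)$. I expect this zero-trace step to be the main obstacle: one must approximate a BV function of vanishing trace by compactly supported smooth functions so that no total variation mass concentrates on $\partial\Omega$ in the limit, for only then does $\int_\Omega (\mathbf{z}, D\varphi_k)$ converge to $\int_\Omega (\mathbf{z}, Dw)$ without picking up a spurious boundary contribution. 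This tightness near $\partial\Omega$ is precisely the analytic content of the vanishing of the trace and is what lets $\ell$ descend to the boundary.

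It then remains to bound $L$. Given $g \in L^1(\partial\Omega)$, I would insert Gagliardo extensions $v_n \in W^{1,1}(\Omega)$ of $g$ (Lemma \ref{lem:anisotropicgagliardo} applied with the Euclidean norm, so that $\int_\Omega |\nabla v_n| \, dx \to \int_{\partial\Omega} |g| \, d\mathcal{H}^{N-1}$), supported in the $\tfrac1n$-neighbourhood of $\partial\Omega$. Since $L(g) = \ell(v_n)$ for every $n$, and since the bulk term $\int_\Omega v_n \, \mathrm{div}(\mathbf{z}) \, dx \to 0$ (as $v_n$ is bounded in $L^{N/(N-1)}(\Omega)$ while $\mathrm{div}(\mathbf{z}) \in L^N(\Omega)$ is integrated over a shrinking strip), one estimates $|\int_\Omega (\mathbf{z}, Dv_n)| = |\int_\Omega \mathbf{z} \cdot \nabla v_n \, dx| \leq \| \mathbf{z} \|_{L^\infty} \int_\Omega |\nabla v_n| \, dx$ and, letting $n \to \infty$, obtains $|L(g)| \leq \| \mathbf{z} \|_{L^\infty(\Omega; \mathbb{R}^N)} \int_{\partial\Omega} |g| \, d\mathcal{H}^{N-1}$. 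Thus $L$ is a bounded functional on $L^1(\partial\Omega)$ of norm at most $\| \mathbf{z} \|_{L^\infty}$, and the duality $L^1(\partial\Omega)^* = L^\infty(\partial\Omega)$ produces $[\mathbf{z}, \nu^\Omega] \in L^\infty(\partial\Omega)$ with $\| [\mathbf{z}, \nu^\Omega] \|_{L^\infty(\partial\Omega)} \leq \| \mathbf{z} \|_{L^\infty(\Omega; \mathbb{R}^N)}$ and $L(g) = \int_{\partial\Omega} [\mathbf{z}, \nu^\Omega] \, g \, d\mathcal{H}^{N-1}$; taking $g = u|_{\partial\Omega}$ is exactly Green's formula.
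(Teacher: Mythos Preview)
The paper does not prove this proposition; it is stated as a summary of known facts, with references to Anzelotti \cite{Anz} for the isotropic case and to \cite{CFM} for the anisotropic version. So there is no ``paper's own proof'' to compare against.

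Your argument is essentially Anzelotti's original one and is correct in outline. The measure bound via the polar inequality and strict approximation is clean; the only point worth making explicit is that strict convergence $\int_\Omega |Dw_n|_\phi \to \int_\Omega |Du|_\phi$ together with lower semicontinuity on open sets does force the weak-$*$ convergence $|Dw_n|_\phi \rightharpoonup |Du|_\phi$ you use.

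For Green's formula you correctly identify the zero-trace step as the crux, but the sketch can be tightened. Rather than directly approximating a zero-trace $w\in BV(\Omega)$ by $C_c^\infty(\Omega)$ functions (which requires building a strict approximation with compact support, a slightly delicate construction via extension by zero and mollification plus cutoff), it is cleaner to do it in two stages: first observe that $\ell(w)=0$ for $w\in W^{1,1}_0(\Omega)$ by density of $C_c^\infty(\Omega)$ in $W^{1,1}_0(\Omega)$ (here the convergence $\int_\Omega \mathbf{z}\cdot\nabla\varphi_k \to \int_\Omega \mathbf{z}\cdot\nabla w$ is immediate from $L^1$ convergence of gradients); then pass to $w\in BV(\Omega)$ with zero trace using Lemma~\ref{lem:anisotropicstrictapproximation}, which produces $w_n\in W^{1,1}(\Omega)$ with the \emph{same} (hence zero) trace and strict convergence. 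At that point your tightness argument applies verbatim: weak-$*$ convergence of $(\mathbf{z},Dw_n)$ against compactly supported cutoffs, combined with $\int_\Omega (1-\eta_\varepsilon)\,|Dw_n|_\phi \to \int_\Omega (1-\eta_\varepsilon)\,|Dw|_\phi$ (which follows from strict convergence), gives $\int_\Omega(\mathbf{z},Dw_n)\to\int_\Omega(\mathbf{z},Dw)$. The rest of your argument, factoring $\ell$ through the trace and bounding the resulting functional on $L^1(\partial\Omega)$ via Gagliardo extensions, is the standard route and is fine.
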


The function $[\mathbf{z},\nu^\Omega]$ has the interpretation of the normal trace of the vector field $\mathbf{z}$ at the boundary and it coincides with the classical normal trace if $\mathbf{z}$ is smooth enough. Moreover, the construction above can be done under slightly more general assumptions (see \cites{Anz,CFM}), but here we restrict ourselves to the setting we will use in Section \ref{sec:relaxedformulation}.

\subsection{Anisotropic least gradient functions}

\begin{definition}
Let $\Omega \subset \mathbb{R}^N$ be an open bounded set with Lipschitz boundary. We say that $u \in BV(\Omega)$ is a function of $\phi-$least gradient $($in $\Omega)$, if for every compactly supported $v \in BV(\Omega)$ we have
\begin{equation*}
\int_\Omega |Du|_\phi \leq \int_\Omega |D(u + v)|_\phi.
\end{equation*}
If $\phi$ admits a continuous extension to $\mathbb{R}^N$, we may instead assume that $v$ is a $BV$ function with zero trace on $\partial\Omega$; see \cite[Proposition 3.16]{Maz}.
\end{definition}

Additionally, if a set $E \subset \Omega$ is such that $\chi_E$ is a function of $\phi-$least gradient, we say that $E$ is a $\phi-$minimal set.

\begin{definition}
We say that $u \in BV(\Omega)$ is a solution to Problem \eqref{problem}, if $u$ is a function of $\phi-$least gradient and the trace of $u$ on $\Gamma$ equals $f$, i.e. for $\mathcal{H}^{N-1}-$almost every $x \in \Gamma$ we have 
$$ {\lim_{r \rightarrow 0^+} \,} \dashint_{B(x,r) \cap \Omega} |f(x) - u(y)| \, dy = 0.$$
\end{definition}

However, this condition is a very strong notion of solutions even in the case when $\Gamma = \partial\Omega$. Typically, in the least gradient problem, solutions in this sense exist only for regular enough boundary data and under additional geometric conditions on $\Omega$ (see \cites{Gor2020NA,Gor2019IUMJ,JMN,Mor,RS,ST,SWZ}); in the isotropic case, a sufficient condition is strict convexity of $\Omega$ and continuity of $f$, see \cite{SWZ}. In Section \ref{sec:relaxedformulation}, we will introduce a different notion of solutions, see Definition \ref{dfn:1laplace}, and in Section \ref{sec:strongformulation} we will discuss the relationship between the two definitions under additional assumptions on $f$ and $\Omega$.

Finally, let us mention a characterisation of $\phi$-least gradient functions via their superlevel sets. The first result of this type has been proved in the isotropic case in \cite[Theorem 1]{BGG} and its proof is based on the the co-area formula.

\begin{theorem}\label{thm:anisobgg}
$($\cite[Theorem 3.19]{Maz}$)$ Let $\Omega \subset \mathbb{R}^N$ be an open bounded set with Lipschitz boundary. Assume that $\phi$ admits a continuous extension to $\mathbb{R}^N$. Take $u \in BV(\Omega)$. Then, $u$ is a function of $\phi-$least gradient in $\Omega$ if and only if $\chi_{E_t}$ is a function of $\phi-$least gradient in $\Omega$ for almost all $t \in \mathbb{R}$.  
\end{theorem}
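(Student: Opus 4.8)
The plan is to reduce everything to the anisotropic coarea formula
\begin{equation*}
\int_\Omega |Du|_\phi = \int_{-\infty}^{\infty} P_\phi(E_t, \Omega)\, dt, \qquad E_t = \{x \in \Omega: u(x) > t\},
\end{equation*}
which is the natural counterpart of the isotropic identity used in \cite{BGG}. First I would record this formula: writing $Du = \theta_u\, |Du|$ for the polar decomposition of the measure $Du$, the classical coarea formula gives $|Du| = \int_{-\infty}^\infty |D\chi_{E_t}|\, dt$ together with $\theta_u = \theta_{\chi_{E_t}}$ at $|D\chi_{E_t}|$-almost every point, and since $\int_\Omega |Du|_\phi = \int_\Omega \phi(x, \theta_u)\, d|Du|$ with $\phi$ one-homogeneous, the formula follows. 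I would also record the \emph{submodularity} of the $\phi$-perimeter,
\begin{equation*}
P_\phi(A \cup B, \Omega) + P_\phi(A \cap B, \Omega) \leq P_\phi(A, \Omega) + P_\phi(B, \Omega),
\end{equation*}
which is a consequence of the convexity and one-homogeneity of $\phi$ (apply the lattice inequality for the $\phi$-total variation to $\chi_A$ and $\chi_B$).

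For the implication $(\Leftarrow)$, assume $\chi_{E_t}$ is of $\phi$-least gradient for a.e.\ $t$ and let $v \in BV(\Omega)$ be compactly supported in some $K \Subset \Omega$. Setting $w = u + v$ and $F_t = \{w > t\}$, the support condition forces $E_t \triangle F_t \subset K \Subset \Omega$, so $\chi_{F_t} - \chi_{E_t}$ is an admissible perturbation in the definition of $\phi$-minimality of $E_t$; hence $P_\phi(E_t, \Omega) \leq P_\phi(F_t, \Omega)$ for a.e.\ $t$. Integrating in $t$ and applying the coarea formula to both $u$ and $w$ gives $\int_\Omega |Du|_\phi \leq \int_\Omega |Dw|_\phi$, i.e.\ $u$ is of $\phi$-least gradient.

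For the converse $(\Rightarrow)$, assume $u$ is of $\phi$-least gradient and suppose, for contradiction, that the set $I$ of levels $t$ at which $E_t$ fails to be $\phi$-minimal has positive measure. After restricting to a bounded subset of positive measure, I would arrange (by a measurable-selection argument) a family of competitors $A_t$, with $A_t \triangle E_t$ contained in a fixed $K \Subset \Omega$ and $P_\phi(A_t, \Omega) \leq P_\phi(E_t, \Omega) - \delta$ for a uniform $\delta > 0$ on $I$, and $A_t = E_t$ off $I$. The obstruction is that $\{A_t\}$ need not be monotone in $t$, so it is not a priori the family of superlevel sets of any function. I would remedy this by \emph{monotonising} the family using the lattice operations to restore monotonicity in $t$, invoking submodularity to ensure this rearrangement does not increase $\int_{-\infty}^\infty P_\phi(A_t, \Omega)\, dt$. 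The monotone family $\{\tilde A_t\}$ then defines a competitor $\tilde u$ with $\tilde u = u$ outside $K$ and, by the coarea formula,
\begin{equation*}
\int_\Omega |D\tilde u|_\phi = \int_{-\infty}^\infty P_\phi(\tilde A_t, \Omega)\, dt \leq \int_{-\infty}^\infty P_\phi(A_t, \Omega)\, dt < \int_{-\infty}^\infty P_\phi(E_t, \Omega)\, dt = \int_\Omega |Du|_\phi,
\end{equation*}
the last strict inequality coming from the uniform gain $\delta$ on the positive-measure set $I$, contradicting the minimality of $u$.

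The main obstacle is precisely this last step: turning the level-wise competitors into the superlevel sets of a single admissible function. Making the selection $t \mapsto A_t$ measurable with uniform control on $\delta$ and on the modification region $K$, and then verifying that the lattice monotonisation keeps the total $\phi$-perimeter from increasing, is the technical heart of the argument. The role of the assumption that $\phi$ extends continuously to $\mathbb{R}^N$ is to guarantee lower semicontinuity, the coarea formula and the submodularity inequality in the form needed up to the boundary, and to allow admissible perturbations to be taken with zero trace rather than compact support.
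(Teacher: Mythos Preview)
The paper does not supply a proof of this statement: it is quoted from \cite[Theorem 3.19]{Maz} and used as a black box, so there is nothing in the paper to compare your argument against directly.

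On its own merits, your $(\Leftarrow)$ direction is correct and is the standard coarea argument. The $(\Rightarrow)$ direction, however, contains a real gap. Submodularity controls $P_\phi(A\cup B)+P_\phi(A\cap B)$ in terms of $P_\phi(A)+P_\phi(B)$ for a \emph{pair} of sets, and iteration handles finite families; but your ``monotonisation'' must rearrange an uncountable family $\{A_t\}_{t\in I}$ into a nested one without increasing $\int P_\phi(A_t)\,dt$. It is not clear how to carry out the lattice operations over a continuum in a measurable way, nor that the resulting sets have finite perimeter, nor that the strict gain $\delta\,|I|$ survives the limiting procedure. You flag this as ``the technical heart'', but as written it is a statement of what one would like to be true rather than a sketch of how to achieve it; the measurable-selection step with uniform $K$ and $\delta$ is also nontrivial to set up.

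The route taken in \cite{Maz} sidesteps all of this via the Euler--Lagrange (calibration) characterisation: $u$ is of $\phi$-least gradient if and only if there exists a divergence-free $\mathbf{z}$ with $\phi^0(\cdot,\mathbf{z})\le 1$ a.e.\ and $(\mathbf{z},Du)=|Du|_\phi$ as measures. Given such $\mathbf{z}$, the coarea formula for both the Anzellotti pairing and the $\phi$-total variation yields
\[
\int_{-\infty}^{\infty}\int_\Omega(\mathbf{z},D\chi_{E_t})\,dt
=\int_\Omega(\mathbf{z},Du)
=\int_\Omega|Du|_\phi
=\int_{-\infty}^{\infty} P_\phi(E_t,\Omega)\,dt,
\]
and since $\int_\Omega(\mathbf{z},D\chi_{E_t})\le P_\phi(E_t,\Omega)$ for every $t$, equality holds for a.e.\ $t$. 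Thus the \emph{same} $\mathbf{z}$ calibrates each $\chi_{E_t}$, which is therefore of $\phi$-least gradient. No selection, no monotonisation.
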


\section{Relaxed formulation of the problem}\label{sec:relaxedformulation}

In this Section, we will impose the Dirichlet boundary condition only on a part of the boundary. Namely, let us take $\Gamma \subset \partial\Omega$ to be a relatively open subset. Given $f \in L^1(\partial\Omega)$, we consider the functional $J_\Gamma: L^{1}(\Omega) \rightarrow [0,\infty]$ defined by the formula
\begin{equation}
J_\Gamma(u) = \twopartdef{\int_\Omega |Du|_\phi}{\mbox{if } u \in BV(\Omega), \,\, u = f \mbox{ on } \Gamma}{+\infty}{\mbox{otherwise.}}
\end{equation}
Study of the problem \eqref{problem} corresponds to minimisation of this functional in $L^1(\Omega)$. However, even when $\Gamma = \partial\Omega$, this minimisation procedure faces some geometric difficulties. In this case, existence of solutions has been proved for continuous boundary data under some additional geometric assumptions on $\Omega$, such as positive mean curvature of $\partial\Omega$ in the isotropic case (see \cite{SWZ}) or the {\it barrier condition} in the anisotropic case (see \cite{JMN}). Furthermore, if the boundary data are discontinuous, it is possible that there are no solutions even in the isotropic case when and $\Omega$ is a disk, see \cite{ST}.

For these reasons, it is natural to study the relaxed functional of $J_\Gamma$, namely the functional $\overline{J}_\Gamma: L^1(\Omega) \rightarrow [0,\infty]$ defined by
\begin{equation}
\overline{J}_\Gamma(u) = \inf \bigg\{ \liminf_{n \rightarrow \infty} J_\Gamma(u_n): \quad u_n \rightarrow u \mbox{ in } L^1(\Omega), \quad u_n \in BV(\Omega), \quad u_n = f \mbox{ on } \Gamma  \bigg\}.
\end{equation}
We will see that if $\Gamma \subset \partial\Omega$ is regular enough, then we may give an exact formula for $\overline{J}_\Gamma$. Consider the functional
$\mathcal{J}_\Gamma: L^{1}(\Omega) \rightarrow [0,\infty]$ defined by the formula
\begin{equation}
\mathcal{J}_{\Gamma}(u) = \int_{\Omega} |Du|_\phi + \int_{\Gamma} \phi(x,\nu^\Omega) |u - f| \, d\mathcal{H}^{N-1}.
\end{equation}
Under a certain geometric assumption on $\Gamma$, we will see in Theorem \ref{thm:relaxation} that $\overline{J}_\Gamma = \mathcal{J}_\Gamma$.

\subsection{Relaxation of the functional}

This subsection is devoted to the study of the relaxed functional of $J_\Gamma$. The analysis will be performed under the following geometric assumption on $\Gamma$:

\begin{definition}\label{dfn:lipschitzextension}
Suppose that $\Omega \subset \mathbb{R}^N$ be an open bounded set with Lipschitz boundary. Let $\Gamma \subset \partial\Omega$. We say that $\Omega$ satisfies the Lipschitz extension property near $\Gamma$, if there exists an open bounded set $\Omega'$ with Lipschitz boundary such that $\Omega \subset \Omega'$ and
$$ \partial\Omega \cap \partial\Omega' = \partial\Omega \backslash \Gamma.$$
\end{definition}

This is in fact a regularity assumption on $\Gamma$. It is satisfied in a variety of cases, for instance when $\Omega$ is a strictly convex set on the plane and $\Gamma$ is a finite union of arcs, or in any dimension when $\Gamma$ is the part of the boundary cut off by a hyperplane.

\begin{proposition}\label{prop:lowersemicontinuity}
Suppose that $\Omega$ satisfies the Lipschitz extension property near $\Gamma$. Then, the functional $\mathcal{J}_\Gamma$ is lower semicontinuous on $L^{1}(\Omega)$.
\end{proposition}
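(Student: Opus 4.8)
The plan is to remove the apparent discontinuity of the boundary term by absorbing it into a bulk $\phi$-total variation on the enlarged domain $\Omega'$. The only genuinely delicate object in $\mathcal{J}_\Gamma$ is the trace integral $\int_\Gamma \phi(x,\nu^\Omega)|u-f|\,d\mathcal{H}^{N-1}$, since traces are not continuous under $L^1$ convergence; the bulk term $u \mapsto \int_\Omega |Du|_\phi$ is already lower semicontinuous on $L^1(\Omega)$, being a supremum of the $L^1$-continuous linear functionals $u \mapsto \int_\Omega u\,\mathrm{div}(\mathbf{z})\,dx$ from the definition of the $\phi$-total variation. The idea is therefore to glue each competitor to a fixed extension of $f$ living just outside $\Omega$, so that the boundary term becomes the jump part of an anisotropic total variation across $\Gamma$.

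First I would use the Lipschitz extension property to fix, once and for all, an extension of the datum. Let $\Omega'$ be the domain from Definition \ref{dfn:lipschitzextension}, so that $\Gamma$ is contained in the interior of $\Omega'$ while $\partial\Omega\setminus\Gamma \subset \partial\Omega'$. Applying the anisotropic Gagliardo extension (Lemma \ref{lem:anisotropicgagliardo}) on the Lipschitz domain $\Omega'\setminus\overline{\Omega}$, I would produce a function $F \in BV(\Omega'\setminus\overline{\Omega})$ whose trace on $\Gamma$ (from the side of $\Omega'\setminus\overline{\Omega}$) equals $f$ and which vanishes near $\partial\Omega'$; in particular $C_F := \int_{\Omega'\setminus\overline{\Omega}} |DF|_\phi < \infty$ is a finite constant independent of the competitor.

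Next, given $u \in BV(\Omega)$, define $\hat{u} \in BV(\Omega')$ by setting $\hat{u}=u$ on $\Omega$ and $\hat{u}=F$ on $\Omega'\setminus\overline{\Omega}$. Since the traces of $u$ and of $F$ are integrable, $\hat u$ is indeed $BV$, and the decomposition of the anisotropic total variation measure across the Lipschitz interface $\Gamma$ yields
\begin{equation*}
\int_{\Omega'} |D\hat{u}|_\phi = \int_\Omega |Du|_\phi + \int_{\Omega'\setminus\overline{\Omega}} |DF|_\phi + \int_{\Gamma} \phi(x,\nu^\Omega)\,|u - f|\,d\mathcal{H}^{N-1},
\end{equation*}
where on $\Gamma$ I used that the jump normal is $\pm\nu^\Omega$ and $\phi$ is $1$-homogeneous, that the inner trace of $\hat u$ is $u$, and that the outer trace is $f$. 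Rearranging, this reads $\int_{\Omega'}|D\hat u|_\phi = \mathcal{J}_\Gamma(u) + C_F$. Now if $u_n \to u$ in $L^1(\Omega)$, then $\hat{u}_n \to \hat{u}$ in $L^1(\Omega')$ because all of them coincide with $F$ outside $\Omega$; lower semicontinuity of the bulk $\phi$-total variation on $\Omega'$ gives $\int_{\Omega'}|D\hat u|_\phi \le \liminf_n \int_{\Omega'}|D\hat u_n|_\phi$, and subtracting the constant $C_F$ from both sides yields $\mathcal{J}_\Gamma(u) \le \liminf_n \mathcal{J}_\Gamma(u_n)$, as desired.

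The step I expect to be the main obstacle is justifying the interface decomposition formula in the anisotropic setting, namely that gluing $u$ and $F$ across $\Gamma$ produces precisely the jump contribution $\int_\Gamma \phi(x,\nu^\Omega)|u-f|\,d\mathcal{H}^{N-1}$ with the correct anisotropic weight. This requires knowing that $\phi$ extends continuously up to a neighbourhood of $\Gamma$, that the restriction of $|D\hat u|_\phi$ to the rectifiable set $\Gamma$ equals $\phi(x,\nu_{\hat u})|\hat u^+ - \hat u^-|\,\mathcal{H}^{N-1}$, and that the inner and outer traces appearing here coincide with the $BV$ traces of $u$ and $F$; the latter is where the Lipschitz regularity of $\Gamma$ (hence the Lipschitz extension property) is essential. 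Everything else is a formal rearrangement together with the already-noted lower semicontinuity of the bulk $\phi$-total variation.
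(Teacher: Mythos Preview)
Your proposal is correct and follows essentially the same approach as the paper: both extend $u$ by a fixed $W^{1,1}$ (or $BV$) function with trace $f$ on $\Gamma$ to the enlarged domain $\Omega'$, use the anisotropic interface decomposition to write $\mathcal{J}_\Gamma(u)$ as $\int_{\Omega'}|D\hat u|_\phi$ minus a fixed constant, and then invoke lower semicontinuity of the $\phi$-total variation on $\Omega'$. The paper simply asserts the existence of such an extension $\psi \in W^{1,1}(\Omega'\setminus\overline\Omega)$ and cites the isotropic version of the gluing formula (\cite[Corollary~3.89]{AFP}), exactly the step you flagged as the main technical point.
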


\begin{proof}
Let $\Omega'$ be the set in Definition \ref{dfn:lipschitzextension}. Let $\psi \in W^{1,1}(\Omega' \backslash \overline{\Omega})$ be a function with trace $f$ on $\Gamma$. Denote by $u_\psi \in BV(\Omega')$ the function defined by
\begin{equation}
u_\psi(x) = \twopartdef{u(x)}{x \in \Omega}{\psi(x)}{x \in \Omega' \backslash \overline{\Omega}.}
\end{equation}
Then, we have (see for instance \cite[Corollary 3.89]{AFP} in the isotropic case)
$$ \int_{\Omega'} |Du_\psi|_\phi = \int_\Omega |Du|_\phi + \int_{\Gamma} \phi(x, \nu^\Omega) |u - f| \, d\mathcal{H}^{N-1} + \int_{\Omega' \backslash \Omega} \phi(x,\nabla \psi(x)) \, dx.$$
We rewrite the above as follows:
$$ \mathcal{J}_{\Gamma}(u) = \int_{\Omega} |Du|_\phi + \int_{\Gamma} \phi(x,\nu^\Omega) |u - f| \, d\mathcal{H}^{N-1} = \int_{\Omega'} |Du_\psi|_\phi - \int_{\Omega' \backslash \Omega} \phi(x,\nabla \psi(x)) \, dx.$$
Now, suppose that $u_n \rightarrow u$ in $L^{1}(\Omega)$. In particular, also $(u_n)_\psi \rightarrow u_\psi$ in $L^1(\Omega')$. Then, by the lower semicontinuity of the $\phi-$total variation,
$$ \liminf_{n \rightarrow \infty} \mathcal{J}_{\Gamma}(u_n) = \liminf_{n \rightarrow \infty}  \int_{\Omega'} |D(u_n)_\psi|_\phi - \int_{\Omega' \backslash \Omega} \phi(x,\nabla \psi(x)) \, dx \geq \qquad\qquad\qquad\qquad\qquad\qquad\qquad$$
$$ \qquad\qquad\qquad\qquad\qquad\qquad\qquad\qquad\qquad\qquad \geq \int_{\Omega'} |Du_\psi|_\phi - \int_{\Omega' \backslash \Omega} \phi(x,\nabla \psi(x)) \, dx = \mathcal{J}_\Gamma(u),$$
so the functional $\mathcal{J}_\Gamma$ is lower semicontinuous on $L^{1}(\Omega)$.
\end{proof}

\begin{proposition}\label{prop:approximation}
Suppose that $\Omega$ satisfies the Lipschitz extension property near $\Gamma$. Given $u \in BV(\Omega)$, there exists a sequence $u_n \in W^{1,1}(\Omega)$ such that $u_n \rightarrow u$ in $L^1(\Omega)$, $u_n = f$ on $\Gamma$ and
\begin{equation}
\mathcal{J}_{\Gamma}(u) = \lim_{n \rightarrow \infty} J_\Gamma(u_n).
\end{equation}
\end{proposition}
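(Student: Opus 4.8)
The plan is to obtain the two bounds $\liminf_n J_\Gamma(u_n) \ge \mathcal{J}_\Gamma(u)$ and $\limsup_n J_\Gamma(u_n) \le \mathcal{J}_\Gamma(u)$ for a suitably chosen sequence. The first bound is essentially free: any admissible $u_n \in W^{1,1}(\Omega)$ with $u_n = f$ on $\Gamma$ satisfies $\mathcal{J}_\Gamma(u_n) = \int_\Omega |Du_n|_\phi = J_\Gamma(u_n)$, because the surface term in $\mathcal{J}_\Gamma$ vanishes. Hence, as soon as $u_n \to u$ in $L^1(\Omega)$, the lower semicontinuity of $\mathcal{J}_\Gamma$ from Proposition \ref{prop:lowersemicontinuity} yields $\liminf_n J_\Gamma(u_n) = \liminf_n \mathcal{J}_\Gamma(u_n) \ge \mathcal{J}_\Gamma(u)$. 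Thus the real content of the Proposition is to construct a recovery sequence realising the opposite inequality; this is where the Lipschitz extension property enters only indirectly, through Proposition \ref{prop:lowersemicontinuity}.

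To build such a sequence, write $g := u|_{\partial\Omega}$ for the trace of $u$ and split the target value $\mathcal{J}_\Gamma(u) = \int_\Omega |Du|_\phi + \int_\Gamma \phi(x,\nu^\Omega)\,|g - f|\,d\mathcal{H}^{N-1}$ into an interior part and a boundary-layer part. First, apply Lemma \ref{lem:anisotropicstrictapproximation} to obtain $w_n \in W^{1,1}(\Omega)$ with $w_n \to u$ in $L^1(\Omega)$, with trace equal to $g$ on $\partial\Omega$, and with $\int_\Omega \phi(x,\nabla w_n)\,dx \to \int_\Omega |Du|_\phi$. These carry the wrong trace on $\Gamma$, so I would correct it by a thin boundary layer: setting $h := (f - g)\chi_\Gamma \in L^1(\partial\Omega)$, Lemma \ref{lem:anisotropicgagliardo} provides $v_n \in W^{1,1}(\Omega)$ with trace $h$ on $\partial\Omega$, supported in the collar $\{ \mathrm{dist}(x,\partial\Omega) < 1/n \}$, and satisfying $\int_\Omega |Dv_n|_\phi \le \int_\Gamma \phi(x,\nu^\Omega)\,|f - g|\,d\mathcal{H}^{N-1} + \tfrac1n$. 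I then set $u_n := w_n + v_n$. On $\Gamma$ the traces add to $g + (f-g) = f$, so $u_n = f$ on $\Gamma$; and $u_n \to u$ in $L^1(\Omega)$, since $w_n \to u$ while $v_n \to 0$ in $L^1(\Omega)$, the latter because the collar has vanishing measure and $v_n$ has uniformly bounded total variation.

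For the energy, since $\phi(x,\cdot)$ is convex and $1$-homogeneous it is subadditive in its second argument, so pointwise $\phi(x,\nabla w_n + \nabla v_n) \le \phi(x,\nabla w_n) + \phi(x,\nabla v_n)$, and integrating gives $\int_\Omega |Du_n|_\phi \le \int_\Omega \phi(x,\nabla w_n)\,dx + \int_\Omega |Dv_n|_\phi$. Passing to the limit, $\limsup_n J_\Gamma(u_n) \le \int_\Omega |Du|_\phi + \int_\Gamma \phi(x,\nu^\Omega)\,|f - g|\,d\mathcal{H}^{N-1} = \mathcal{J}_\Gamma(u)$, which combined with the lower bound yields $\lim_n J_\Gamma(u_n) = \mathcal{J}_\Gamma(u)$. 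The main obstacle is to decouple the interior approximation from the trace correction so that the two total-variation contributions do not interfere in the limit: the subadditivity estimate is what makes the splitting lossless, and it is essential that the layer $v_n$ concentrate on a collar of vanishing width, so that it contributes exactly the prescribed surface term and nothing to either the interior energy or the $L^1$ limit. Verifying $v_n \to 0$ in $L^1(\Omega)$ and that the correction does not leak mass into $\int_\Omega \phi(x,\nabla w_n)\,dx$ are the points that require care.
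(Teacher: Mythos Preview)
Your proof is correct and follows essentially the same approach as the paper: strict approximation $w_n$ via Lemma~\ref{lem:anisotropicstrictapproximation} plus a Gagliardo boundary-layer correction $v_n$ via Lemma~\ref{lem:anisotropicgagliardo}, combined by subadditivity of $\phi$ and closed off with the lower semicontinuity of $\mathcal{J}_\Gamma$. The only differences are notational (the paper writes the correction datum as $g$ rather than $h$) and that you spell out why $v_n \to 0$ in $L^1(\Omega)$ a bit more explicitly than the paper does.
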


\begin{proof}
We set
\begin{equation}
g = \twopartdef{f - u}{\mbox{on } \Gamma}{0}{\mbox{on } \partial\Omega \backslash \Gamma.}
\end{equation}
Let $w_n$ be the sequence given by Lemma \ref{lem:anisotropicstrictapproximation} and let $v_n$ be the sequence given by Lemma \ref{lem:anisotropicgagliardo}. We have $w_n \rightarrow u$ in $L^1(\Omega)$, $v_n \rightarrow 0$ in $L^1(\Omega)$ and $v_n = g$ on $\partial\Omega$. Moreover, we rewrite the estimate in Lemma \ref{lem:anisotropicgagliardo} as
\begin{equation}
\int_\Omega |Dv_n|_\phi \leq \int_{\Gamma} \phi(x,\nu^\Omega) \, |u - f| \, d\mathcal{H}^{N-1} + \frac{1}{n}.
\end{equation}
Set $u_n = v_n + w_n$. Then, $u_n \in W^{1,1}(\Omega)$, $u_n \rightarrow u$ in $L^1(\Omega)$ and $u_n = f$ on $\Gamma$. We estimate
\begin{equation}
\mathcal{J}_\Gamma(u_n) = \int_\Omega |Du_n|_\phi \leq \int_\Omega |Dv_n|_\phi + \int_\Omega |Dw_n|_\phi \leq \int_\Gamma \phi(x,\nu^\Omega) \, |u - f| \, d\mathcal{H}^{N-1} + \frac{1}{n} + \int_\Omega |Dw_n|_\phi.
\end{equation}
Now, we take the upper limit in the above series of inequalities. By the lower semicontinuity of $\mathcal{J}_\Gamma$ given in Proposition \ref{prop:lowersemicontinuity}, we get
\begin{equation*}
\mathcal{J}_\Gamma(u) \leq \liminf_{n \rightarrow \infty} \mathcal{J}_\Gamma(u_n) \leq \limsup_{n \rightarrow \infty} \mathcal{J}_\Gamma(u_n) \leq \limsup_{n \rightarrow \infty} \int_\Omega |Dw_n|_\phi + \int_{\Gamma} \phi(x,\nu) \, |u - f| \, d\mathcal{H}^{N-1} = \qquad
\end{equation*}
\begin{equation*}
\qquad\qquad\qquad\qquad\qquad\qquad\qquad\qquad\qquad = \lim_{n \rightarrow \infty} \int_\Omega |Dw_n|_\phi + \int_{\Gamma} \phi(x,\nu^\Omega) \, |u - f| \, d\mathcal{H}^{N-1} = \mathcal{J}_\Gamma(u).
\end{equation*}
Hence, all the inequalities above are in fact equalities and $u_n$ satisfies all the desired properties.
\end{proof}

Finally, notice that Propositions \ref{prop:lowersemicontinuity} and \ref{prop:approximation} immediately imply the following Theorem.

\begin{theorem}\label{thm:relaxation}
Suppose that $\Omega$ satisfies the Lipschitz extension property near $\Gamma$. Then, the relaxation of the functional $J_\Gamma$ is the functional $\mathcal{J}_\Gamma$. \qed
\end{theorem}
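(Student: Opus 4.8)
The plan is to prove the two inequalities $\mathcal{J}_\Gamma \leq \overline{J}_\Gamma$ and $\overline{J}_\Gamma \leq \mathcal{J}_\Gamma$ pointwise on $L^1(\Omega)$; the first will come from the lower semicontinuity established in Proposition \ref{prop:lowersemicontinuity}, and the second from the recovery sequence constructed in Proposition \ref{prop:approximation}. The single observation tying the two functionals together is that whenever $v \in BV(\Omega)$ satisfies $v = f$ on $\Gamma$, the boundary term $\int_\Gamma \phi(x,\nu^\Omega)|v-f| \, d\mathcal{H}^{N-1}$ vanishes, so that $\mathcal{J}_\Gamma(v) = \int_\Omega |Dv|_\phi = J_\Gamma(v)$. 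Thus $\mathcal{J}_\Gamma$ and $J_\Gamma$ coincide exactly on the admissible class appearing in the definition of $\overline{J}_\Gamma$, which is precisely what lets the lower semicontinuity of $\mathcal{J}_\Gamma$ control the relaxation of $J_\Gamma$.

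For the lower bound I would fix $u \in L^1(\Omega)$ and take an arbitrary admissible sequence $u_n \to u$ in $L^1(\Omega)$ with $u_n \in BV(\Omega)$ and $u_n = f$ on $\Gamma$. By the observation above $J_\Gamma(u_n) = \mathcal{J}_\Gamma(u_n)$, and Proposition \ref{prop:lowersemicontinuity} gives
\[ \mathcal{J}_\Gamma(u) \leq \liminf_{n \to \infty} \mathcal{J}_\Gamma(u_n) = \liminf_{n \to \infty} J_\Gamma(u_n). \]
Passing to the infimum over all such sequences yields $\mathcal{J}_\Gamma(u) \leq \overline{J}_\Gamma(u)$; if no admissible sequence exists, then $\overline{J}_\Gamma(u) = +\infty$ and the inequality is trivial.

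For the upper bound I may assume $\mathcal{J}_\Gamma(u) < \infty$, which in particular forces $u \in BV(\Omega)$. Proposition \ref{prop:approximation} then provides a sequence $u_n \in W^{1,1}(\Omega) \subset BV(\Omega)$ with $u_n \to u$ in $L^1(\Omega)$, $u_n = f$ on $\Gamma$, and $\lim_{n \to \infty} J_\Gamma(u_n) = \mathcal{J}_\Gamma(u)$. Since this sequence competes in the infimum defining $\overline{J}_\Gamma$, we obtain $\overline{J}_\Gamma(u) \leq \mathcal{J}_\Gamma(u)$, and combining the two inequalities gives $\overline{J}_\Gamma = \mathcal{J}_\Gamma$.

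Because the two analytic ingredients are already isolated in Propositions \ref{prop:lowersemicontinuity} and \ref{prop:approximation}, I do not expect a genuine obstacle here; the only points requiring care are the bookkeeping of the degenerate cases --- when $u \notin BV(\Omega)$ one has $\int_\Omega |Du|_\phi = +\infty$, so both functionals equal $+\infty$ and nothing is lost --- and the verification that the recovery sequence of Proposition \ref{prop:approximation}, being of class $W^{1,1}(\Omega)$ with the correct trace on $\Gamma$, genuinely lies in the admissible class over which $\overline{J}_\Gamma$ is defined.
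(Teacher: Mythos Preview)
Your proof is correct and follows exactly the approach intended in the paper: the author simply states that Propositions \ref{prop:lowersemicontinuity} and \ref{prop:approximation} immediately imply the theorem, and your argument spells out precisely the standard two-inequality relaxation argument that this remark encodes.
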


Therefore, in what follows we will study the properties of the functional $\mathcal{J}_\Gamma$.

\subsection{The Euler-Lagrange characterisation}

In this Section, we want to study existence of solutions to \eqref{problem} in the sense of Euler-Lagrange equations. This is motivated by the following observation: in the isotropic least gradient problem with the Dirichlet boundary condition imposed on the whole boundary, the Euler-Lagrange equation of
\begin{equation*}
\min \bigg\{ \int_\Omega |Du|: \quad u \in BV(\Omega), \quad u = f \mbox{ on } \partial\Omega   \bigg\}
\end{equation*}
is formally given by the $1-$Laplace equation
\begin{equation*}
\twopartdef{-\mathrm{div}\bigg( {\displaystyle \frac{Du}{|Du|} }\bigg) = 0}{\mbox{ in } \Omega}{u = f}{\mbox{ on } \partial\Omega.}
\end{equation*}
This formal expression was first given a precise meaning by Maz\'on, Rossi and Segura de Le\'on in \cite{MRL}. The authors provide a characterisation of solutions to the $1-$Laplace equation by introducing a divergence-free vector field $\mathbf{z}$ which plays a role of the expression $\frac{Du}{|Du|}$ even when it is not well-defined. The vector field $\mathbf{z}$ is obtained using an approximation by solutions to the Dirichlet problems for the $p-$Laplace equations as $p \rightarrow 1$. A similar idea appears in \cite{Maz}, where the author used the Yosida approximation of the subdifferential in order to recover an analogous result in the anisotropic case. 

Here, we use a different (and perhaps simpler) approach. Instead of solving a sequence of approximate problems, we will use the duality theory in the sense of Ekeland-Temam (\cite{ET}). We restrict the domain of the functional $\mathcal{J}_\Gamma$ to $W^{1,1}(\Omega)$, and find the dual problem to the minimisation of $\mathcal{J}_\Gamma$ in $W^{1,1}(\Omega)$. We will see that the dual problem admits a solution (even if the primal problem does not) and we will use the $\varepsilon-$subdifferentiability property to obtain the Euler-Lagrange equations for any minimiser of the functional $\mathcal{J}_\Gamma$ in $BV(\Omega)$.

First, let us recall that $BV(\Omega) \subset L^{N/(N-1)}(\Omega)$. The dual space to $L^{N/(N-1)}(\Omega)$ is $L^N(\Omega)$; in this duality, for any $u \in L^{N/(N-1)}$ we can define the subdifferential of the convex and lower semicontinuous (provided that $\Omega$ satisfies the Lipschitz extension property near $\Gamma$) functional $\mathcal{J}_\Gamma$ as follows:
\begin{equation*}
\partial \mathcal{J}_\Gamma (u) := \bigg\{ w \in L^N(\Omega): \, \mathcal{J}_\Gamma(v) - \mathcal{J}_\Gamma(u) \geq \int_\Omega w (v - u) \, dx \quad  \forall v \in L^{N/(N-1)}(\Omega) \bigg\}.
\end{equation*}
Under these assumptions, the subdifferential $\partial\mathcal{J}_\Gamma(u)$ is a convex, closed and nonempty set. Moreover, $u$ is a minimiser of the functional $\mathcal{J}_\Gamma$ if and only if $0 \in \partial\mathcal{J}_\Gamma(u)$. Therefore, the Euler-Lagrange equation associated to minimisation of $\mathcal{J}_\Gamma$ is
\begin{equation}\label{eq:variationalsolution}
0 \in \partial\mathcal{J}_\Gamma(u)
\end{equation}
(note that this incorporates the Dirichlet boundary condition imposed on $\Gamma$). Following \cite{MRL}, we now give a precise characterisation of solutions to \eqref{eq:variationalsolution}:

\begin{definition}\label{dfn:1laplace}
We will say that $u \in BV(\Omega)$ is a solution to the anisotropic $1-$Laplace equation with Dirichlet boundary datum on $\Gamma \subset \partial\Omega$, if there exists a vector field $\mathbf{z} \in L^\infty(\Omega; \mathbb{R}^N)$ such that $\phi^0(x,\mathbf{z}(x)) \leq 1$ a.e. in $\Omega$ which satisfies:
\begin{equation}\label{eq:divergencecondition}
-\mathrm{div}(\mathbf{z}) = 0 \qquad \mbox{ in } \mathcal{D}'(\Omega);
\end{equation}
\begin{equation}\label{eq:directionalcondition}
(\mathbf{z}, Du) = |Du|_\phi \qquad \mbox{ as measures in } \Omega; 
\end{equation}
\begin{equation}
[\mathbf{z}, \nu^\Omega] \in \mbox{sign}(f-u) \, \phi(\cdot, \nu^\Omega) \qquad \mbox{ a.e. on } \Gamma;
\end{equation}
\begin{equation}
[\mathbf{z}, \nu^\Omega] = 0 \qquad \mbox{ a.e. on } \partial\Omega \backslash \Gamma.
\end{equation}
\end{definition}

The name ``anisotropic $1$-Laplace equation'' comes from the fact that in the isotropic case, assuming that $u \in W^{1,1}(\Omega)$ with $|\nabla u| > 0$, we have $\mathbf{z} = \frac{\nabla u}{|\nabla u|}$ a.e. in $\Omega$, so equations \eqref{eq:divergencecondition} and \eqref{eq:directionalcondition} reduce to
\begin{equation*}
-\mathrm{div}\bigg( {\displaystyle \frac{\nabla u}{|\nabla u|} }\bigg) = 0.
\end{equation*}
In the next subsection, we prove that the two notions of solutions given in equation \eqref{eq:variationalsolution} and Definition \ref{dfn:1laplace} are indeed equivalent. Then, we will study the relationship between these formulations and the anisotropic least gradient problem with Dirichlet boundary condition on a part of the boundary, i.e. problem \eqref{problem}.

\subsection{Proof of the Euler-Lagrange characterisation}

We want to prove that any minimiser $u \in BV(\Omega)$ of the functional $\mathcal{J}_\Gamma$ satisfies the conditions in Definition \ref{dfn:1laplace}. To this end, we will study the dual problem to the minimisation of $\mathcal{J}_\Gamma$. First, let us recall the notion of the Legendre-Fenchel transform. It is defined as follows: given a Banach space $V$ and $F: V \rightarrow \mathbb{R} \cup \{ + \infty \}$, we define $F^*: V^* \rightarrow \mathbb{R} \cup \{ + \infty \}$ by the formula
\begin{equation*}
F^*(v^*) = \sup_{v \in V} \bigg\{ \langle v, v^* \rangle - F(v) \bigg\}.
\end{equation*}
Then, let us recall shortly how the dual problem is typically defined in the setting of calculus of variations. A standard reference is \cite[Chapter III.4]{ET}. 

Let $X, Y$ be two Banach spaces and let $A: X \rightarrow Y$ be a continuous linear operator. Denote by $A^*: Y^* \rightarrow X^*$ its dual. Then, if the primal problem is of the form
\begin{equation}\tag{P}\label{eq:primal}
\inf_{u \in X} \bigg\{ E(Au) + G(u) \bigg\},
\end{equation}
where $E: Y \rightarrow \mathbb{R} \cup \{ +\infty \}$ and $G: X \rightarrow \mathbb{R} \cup \{ +\infty \}$ are proper, convex and lower semicontinuous, then the dual problem is defined as the maximisation problem
\begin{equation}\tag{P*}\label{eq:dual}
\sup_{p^* \in Y^*} \bigg\{ - E^*(-p^*) - G^*(A^* p^*) \bigg\}.
\end{equation}
Moreover, if there exists $u_0 \in X$ such that $E(A u_0) < \infty$, $G(u_0) < \infty$ and $E$ is continuous at $A u_0$, then
$$\inf \eqref{eq:primal} = \sup \eqref{eq:dual}$$
and the dual problem \eqref{eq:dual} admits at least one solution.

Let us express the minimisation of $\mathcal{J}_\Gamma$ in this framework. We restrict its domain of definition to $W^{1,1}(\Omega)$, so that the gradient is a bounded operator from $W^{1,1}(\Omega)$ to $(L^1(\Omega))^N$ and the dual spaces are easy to control. Namely, we minimise the functional $F: W^{1,1}(\Omega) \rightarrow [0,\infty]$ given by the same formula as $\mathcal{J}_\Gamma$, i.e.
\begin{equation}
F(u) = \int_{\Omega} \phi(x,\nabla u(x)) \, dx + \int_{\Gamma} \phi(x,\nu^\Omega) \, |u - f|  \, d\mathcal{H}^{N-1}.
\end{equation}
We want to express the minimisation of the functional $F$ in the framework of Fenchel duality. Therefore, we set $X = W^{1,1}(\Omega)$, $Y = L^1(\partial\Omega) \times (L^1(\Omega))^N$, and the linear operator $A: X \rightarrow Y$ is defined by the formula
\begin{equation*}
Au = (u|_{\partial\Omega}, \nabla u).
\end{equation*}
Here, $u|_{\partial\Omega}$ is the trace of $u \in W^{1,1}(\Omega)$ on $\partial\Omega$. In particular, the dual spaces to $X$ and $Y$ are
\begin{equation*}
X^* = (W^{1,1}(\Omega))^*, \qquad Y^* =  L^\infty(\partial\Omega) \times (L^\infty(\Omega))^N.
\end{equation*}
We denote the points $p \in Y$ in the following way: $p = (p_0, \overline{p})$, where $p_0 \in L^1(\partial\Omega)$ and $\overline{p} \in (L^1(\Omega))^N$. We will also use a similar notation for points $p^* \in Y^*$. Then, we set $E: L^1(\partial\Omega) \times (L^1(\Omega))^N \rightarrow \mathbb{R}$ by the formula
\begin{equation}\label{eq:definitionofE}
E(p_0, \overline{p}) = E_0(p_0) + E_1(\overline{p}), \quad E_0(p_0) = \int_{\Gamma} \phi(x,\nu^\Omega) |p_0 - f| \, d\mathcal{H}^{N-1}, \quad E_1(\overline{p}) = \int_\Omega \phi(x,\overline{p}(x)) \, dx.
\end{equation}
We also set $G: W^{1,1}(\Omega) \rightarrow \mathbb{R}$ to be the zero functional, i.e. $G \equiv 0$. In particular, the functional $G^*: (W^{1,1}(\Omega))^* \rightarrow [0,\infty]$ is given by the formula
\begin{equation*}
G^*(u^*) = \twopartdef{0}{\mbox{if } u^* = 0;}{+\infty}{\mbox{if } u^* \neq 0.}
\end{equation*}
By \cite[Lemma 2.1]{Mor} the functional $E_1^*: (L^\infty(\Omega))^N \rightarrow [0,\infty]$ is given by the formula
\begin{equation*}
E_1^*(\overline{p}^*) = \twopartdef{0}{\mbox{if } \phi^0(x,\overline{p}^*(x)) \leq 1 \mbox{ a.e. in } \Omega;}{+\infty}{\mbox{otherwise}.}
\end{equation*}
It remains to calculate the functional $E_0^*: L^\infty(\partial\Omega) \rightarrow \mathbb{R} \cup \{ \infty \}$. 

\begin{lemma}\label{lem:E0dual}
Let $E_0$ be defined in equation \eqref{eq:definitionofE}. Then, we have
\begin{equation}
E_0^*(p_0^*) = \twopartdef{\int_{\partial\Omega} f \, p_0^* \, d\mathcal{H}^{N-1}}{p_0^* = 0 \mbox{ a.e. on } \partial\Omega\backslash\Gamma, |p_0^*| \leq \phi(x,\nu^\Omega) \mbox{ a.e. on } \Gamma;}{+\infty}{\mbox{otherwise.}}    
\end{equation}
\end{lemma}

\begin{proof}
First, we will prove that
\begin{equation}\label{eq:calculatingthedual}
\sup_{p_0 \in L^1(\partial\Omega)} \bigg\{ \langle p_0, p_0^* \rangle - \int_{\Gamma} \phi(x,\nu^\Omega) \, |p_0| \, d\mathcal{H}^{N-1} \bigg\} \end{equation}
equals zero if $p_0^* = 0$ $\mbox{a.e. on } \partial\Omega \backslash \Gamma$ with respect to $\mathcal{H}^{N-1}$ and $|p_0^*| \leq \phi(x,\nu^\Omega)$ $\mbox{a.e. on } \Gamma$ with respect to $\mathcal{H}^{N-1}$. Otherwise, this value equals $+\infty$. 

Suppose otherwise. If $p_0^* \neq 0$ on a subset of $\partial\Omega \backslash \Gamma$ of positive $\mathcal{H}^{N-1}-$measure, then there exists a set $K \subset \Gamma$ of positive $\mathcal{H}^{N-1}-$measure such that $|p_0^*| > \varepsilon$ on $K$. Without loss of generality, assume that $p_0^* > \varepsilon$ on $K$. Take the sequence $p_0^k = k \chi_K$; then, we have
\begin{equation}
\langle p_0^k, p_0^* \rangle - \int_{\Gamma} \phi(x,\nu^\Omega) \, |p_0^k| \, d\mathcal{H}^{N-1} = \langle p_0^k, p_0^* \rangle \geq \int_K k \varepsilon \, d\mathcal{H}^{N-1} \rightarrow +\infty,
\end{equation}
so the expression in \eqref{eq:calculatingthedual} goes to $+\infty$ as $k \rightarrow \infty$.

Similarly, assume that $|p_0^*| > \phi(x,\nu^\Omega)$ on a subset of $\Gamma$ of positive $\mathcal{H}^{N-1}-$measure. Then, there exists a set $K' \subset \Gamma$ of positive $\mathcal{H}^{N-1}-$measure such that $|p_0^*| > \phi(x,\nu^\Omega) + \varepsilon$ on $K'$. Without loss of generality, assume that $p_0^* > \phi(x,\nu^\Omega) + \varepsilon$ on $K'$. Take the sequence $p_0^l = l \chi_K'$; then, we have
\begin{equation*}
\langle p_0^l, p_0^* \rangle - \int_{\Gamma} \phi(x,\nu^\Omega) \, |p_0^l| \, d\mathcal{H}^{N-1} \geq \int_{K'} l |p_0^*| \, d\mathcal{H}^{N-1} - \int_{K'} l \phi(x,\nu^\Omega) \, d\mathcal{H}^{N-1} \geq \int_{K'} l\varepsilon \, d\mathcal{H}^{N-1} \rightarrow +\infty,
\end{equation*}
so the expression in \eqref{eq:calculatingthedual} goes to $+\infty$ as $l \rightarrow \infty$.

Now, let us see that if these two conditions are satisfied, then the expression in \eqref{eq:calculatingthedual} is bounded from above by zero. Indeed, we have
\begin{equation*}
\langle p_0, p_0^* \rangle - \int_{\Gamma} \phi(x,\nu^\Omega) \, |p_0| \, d\mathcal{H}^{N-1} \leq \int_\Gamma \phi(x,\nu^\Omega) (p_0 - |p_0|) \, d\mathcal{H}^{N-1} \leq 0,
\end{equation*}
so we proved equation \eqref{eq:calculatingthedual}.

Finally, we compute
$$ E_0^*(p_0^*) = \sup_{p_0 \in L^1(\partial\Omega)} \bigg\{ \langle p_0, p_0^* \rangle - \int_{\Gamma} \phi(x, \nu^\Omega) \, |p_0 - f| \, d\mathcal{H}^{N-1} \bigg\} = $$
$$ = \sup_{p_0 \in L^1(\partial\Omega)} \bigg\{ \langle p_0 + f, p_0^* \rangle - \int_{\Gamma} \phi(x,\nu^\Omega) \, |p_0| \, d\mathcal{H}^{N-1} \bigg\} =$$
$$ = \langle f, p_0^* \rangle + \sup_{p_0 \in L^1(\partial\Omega)} \bigg\{ \langle p_0, p_0^* \rangle - \int_{\Gamma} \phi(x,\nu^\Omega) \, |p_0| \, d\mathcal{H}^{N-1} \bigg\} = $$
\begin{equation*}
= \twopartdef{\int_{\partial\Omega} f \, p_0^* \, d\mathcal{H}^{N-1}}{p_0^* = 0 \mbox{ a.e. on } \partial\Omega\backslash\Gamma, \, |p_0^*| \leq \phi(x,\nu^\Omega) \mbox{ a.e. on } \Gamma;}{+\infty}{\mbox{otherwise,}}    
\end{equation*}
which is the desired formula for $E_0^*$.
\end{proof}

In order to find the form of the dual problem \eqref{eq:dual}, the last thing we need to do is take a closer look at the operator $A^*$. This operator only enters the dual problem via $G^*(A^*p^*)$; by the form of $G^*$, we only need to check what is the condition so that $A^* p^* = 0$. By definition of the dual operator, for every $u \in W^{1,1}(\Omega)$ we have
$$ 0 = \langle u, A^* p^* \rangle  = \langle p^*, Au \rangle = \int_{\partial\Omega} p^*_0 \, u \, d\mathcal{H}^{N-1} + \int_\Omega \overline{p}^* \cdot \nabla u.$$
First, take $u$ to be a smooth function with compact support in $\Omega$; then, this condition reduces to
$$ \int_\Omega \overline{p}^* \cdot \nabla u = 0 \quad \forall u \in C_c^\infty(\Omega),$$
hence $\mathrm{div}(\overline{p}^*) = 0$ as distributions. Hence, $\overline{p}^* \in X_p(\Omega)$ for all $p \geq 1$ and we may use the Green's formula. Given any $u \in W^{1,1}(\Omega)$, we get 
$$ 0 = \langle u, A^* p^* \rangle  = \langle p^*, Au \rangle = \int_{\partial\Omega} p^*_0 \, u \, d\mathcal{H}^{N-1} + \int_\Omega \overline{p}^* \cdot \nabla u = \qquad\qquad\qquad\qquad\qquad\qquad\qquad\qquad $$
$$ \qquad\qquad\qquad\qquad\qquad = \int_{\partial\Omega} (p^*_0 + [\overline{p}^*, \nu^\Omega]) \, u \, d\mathcal{H}^{N-1} - \int_\Omega u \, \mathrm{div}(\overline{p}^*) = \int_{\partial\Omega} (p^*_0 + [\overline{p}^*, \nu^\Omega]) \, u \, d\mathcal{H}^{N-1}.$$
Because the right hand side disappears for all $u \in W^{1,1}(\Omega)$ and the trace operator from $W^{1,1}(\Omega)$ to $L^1(\partial\Omega)$ is surjective, we have $p_0^* = -[\overline{p}^*, \nu^\Omega]$.

We are now ready to state the form of the dual problem. Keeping in mind the above calculations, we first rewrite the dual problem \eqref{eq:dual} as
\begin{equation}
\sup_{p^* \in L^\infty(\partial\Omega) \times (L^\infty(\Omega))^N} \bigg\{ - E_0^*(-p_0^*) - E_1^*(\overline{p}^*) - G^*(A^* p^*) \bigg\}.
\end{equation}
Now, we take into account that unless $A^* p^* = 0$, this expression equals $-\infty$. Hence, we again rewrite the dual problem as
\begin{equation}
\sup_{\substack{ p^* \in L^\infty(\partial\Omega) \times (L^\infty(\Omega))^N \\ \mathrm{div}(\overline{p}^*) = 0 \\ p_0^* = -[\overline{p}^*,\nu^\Omega]}} \bigg\{ - E_0^*(-p_0^*) - E_1^*(\overline{p}^*) \bigg\}.
\end{equation}
Now, we take into account that unless $\phi^0(x,\overline{p}^*(x)) \leq 1$ a.e. in $\Omega$, then this expression equals $-\infty$. Hence, we again rewrite the dual problem as
\begin{equation}
\sup_{\substack{ p^* \in L^\infty(\partial\Omega) \times (L^\infty(\Omega))^N \\ \mathrm{div}(\overline{p}^*) = 0 \\ \phi^0(x,\overline{p}^*(x)) \leq 1 \mbox{ in } \Omega \\ p_0^* = -[\overline{p}^*,\nu^\Omega]}} \bigg\{ - E_0^*(-p_0^*) \bigg\}.
\end{equation}
Finally, we use Lemma \ref{lem:E0dual} and plug in the form of $E_0^*$ to the above formula. Let us denote the space of admissible vector fields in the dual problem as
\begin{equation*}
\mathcal{A}_\Gamma = \bigg\{ p^* \in L^\infty(\partial\Omega) \times (L^\infty(\Omega))^N: \, \mathrm{div}(\overline{p}^*) = 0 \mbox{ in } \mathcal{D}'(\Omega); \, \phi^0(x,\overline{p}^*(x)) \leq 1 \mbox{ a.e. in } \Omega; \qquad\qquad\qquad
\end{equation*}
\begin{equation*}
\qquad\qquad\qquad\qquad p_0^* = -[\overline{p}^*,\nu^\Omega]; \, p_0^* = 0 \,\, \mathcal{H}^{N-1}-\mbox{a.e. on } \partial\Omega\backslash\Gamma; \,  |p_0^*| \leq \phi(x,\nu^\Omega) \,\, \mathcal{H}^{N-1}-\mbox{a.e. on } \Gamma  \bigg\}
\end{equation*}
and obtain that the dual problem \eqref{eq:dual} takes the form
\begin{equation}\label{eq:finaldual}
\sup_{\substack{ p^* \in \mathcal{A}_\Gamma}} \bigg\{ \int_{\partial\Omega} f \, p_0^* \, d\mathcal{H}^{N-1} \bigg\}.
\end{equation}
Moreover, we have $\inf \eqref{eq:primal} = \sup \eqref{eq:dual}$ and the dual problem \eqref{eq:finaldual} admits a solution, because for $u_0 \equiv 0$ we have $E(Au_0) = \int_\Gamma \, \phi(x,\nu^\Omega) \, |f| \, d\mathcal{H}^{N-1} < \infty$, $G(u_0) = 0 < \infty$ and $E$ is continuous at $0$.

Furthermore, we may simplify the dual problem a bit. In light of the constraint $p_0^* = - [\overline{p}^*,\nu^\Omega]$ we may decrease the number of variables. Again, we introduce a similar set
\begin{equation*}
\mathcal{A}'_\Gamma = \bigg\{ \overline{p}^* \in (L^\infty(\Omega))^N: \, \mathrm{div}(\overline{p}^*) = 0 \mbox{ in } \mathcal{D}'(\Omega); \, \phi^0(x,\overline{p}^*(x)) \leq 1 \mbox{ a.e. in } \Omega; \qquad\qquad\qquad\qquad
\end{equation*}
\begin{equation*}
\qquad\qquad\qquad\qquad\qquad\qquad\qquad\qquad\qquad\qquad\qquad\qquad\qquad\qquad [\overline{p}^*,\nu^\Omega] = 0 \,\, \mathcal{H}^{N-1}-\mbox{a.e. on } \partial\Omega\backslash\Gamma  \bigg\}
\end{equation*}
The reduced form of the dual problem is
\begin{equation}\tag{RP*}\label{eq:reduceddual}
\sup_{\overline{p}^* \in \mathcal{A}'_\Gamma} \bigg\{ - \int_{\partial\Omega} f \, [\overline{p}^*, \nu^\Omega] \, d\mathcal{H}^{N-1} \bigg\}.
\end{equation}
After this identification, the last constraint in \eqref{eq:finaldual} disappears in \eqref{eq:reduceddual}, because it is implied by the constraint $\phi^0(x,\overline{p}^*(x)) \leq 1$ a.e. in $\Omega$.

\begin{remark}
Provided that $\mathcal{J}_\Gamma$ is lower semicontinuous and a solution of the primal problem exists $($in $W^{1,1}(\Omega))$, the extremality relations between any solution $u$ of the primal problem and any solution $p^*$ of the dual problem are as follows $($see \cite[Remark III.4.2]{ET}$)$:
\begin{equation}\label{eq:extremality1}
E(Au) + E^*(-p^*) = \langle -p^*, Au \rangle
\end{equation}
\begin{equation}\label{eq:extremality2}
G(u) + G^*(A^* p^*) = \langle u, A^* p^* \rangle.
\end{equation}
Let us plug in the expressions for $E, E^*, G, G^*$ into these two equations to see what are the extremality relations for problem \eqref{eq:variationalsolution}. Equation \eqref{eq:extremality2} is automatically satisfied and equation \eqref{eq:extremality1} becomes
\begin{equation*}
\int_\Gamma \phi(x,\nu^\Omega) \, |u - f| \, d\mathcal{H}^{N-1} + \int_\Omega \phi(x,\nabla u) \, dx - \int_{\partial\Omega} f \, p_0^* \, d\mathcal{H}^{N-1} = - \int_{\partial\Omega} u \, p_0^* \, d\mathcal{H}^{N-1} - \int_\Omega \overline{p}^* \cdot \nabla u \, dx,
\end{equation*}
where $p^* \in \mathcal{A}_\Gamma$. Keeping in mind that $p_0^* = 0$ $\mathcal{H}^{N-1}-$a.e. on $\partial\Omega\backslash\Gamma$, we rewrite this as
\begin{equation}
\int_{\Gamma} \bigg( \phi(x,\nu^\Omega) |u - f| + p_0^* (u - f)  \bigg) \, d\mathcal{H}^{N-1} + \int_\Omega \bigg( \phi(x,\nabla u) + \overline{p}^* \cdot \nabla u \bigg) \, dx = 0.
\end{equation}
Since $|p_0^*| \leq \phi(x,\nu^\Omega)$ $\mathcal{H}^{N-1}-$a.e. on $\Gamma$, the expression in the first bracket is nonnegative; similarly, because $\phi^0(x,\overline{p}^*) \leq 1$ a.e. in $\Omega$, the expression in the second bracket is nonnegative. Hence, both are equal to zero and we have
$$ \phi(x,\nabla u) = - \overline{p}^* \cdot \nabla u \quad \mbox{a.e. in } \Omega $$
and
$$ p_0^* \in \mbox{sign}(f - u) \phi(x,\nu^\Omega) \quad \mathcal{H}^{N-1}-\mbox{a.e. on } \Gamma.$$
Notice that because $p_0^* = - [\overline{p}^*, \nu^\Omega]$, the vector field $\mathbf{z} = -\overline{p}^*$ satisfies all the conditions in Definition \ref{dfn:1laplace}. Thus, if there exists a solution $u \in W^{1,1}(\Omega)$ of problem \eqref{eq:variationalsolution}, then it is a solution to the $1-$Laplace equation in the sense of Definition \ref{dfn:1laplace}.
\end{remark}

The following Theorem extends the observation above to the case when there is no minimiser of the functional $\mathcal{J}_\Gamma$ in $W^{1,1}(\Omega)$ and the minimum is in the space $BV(\Omega)$. Here, instead of the extremality conditions, we will use the $\varepsilon-$subdifferentiability property.

\begin{theorem}\label{thm:characterisation}
Suppose that $\Omega$ satisfies the Lipschitz extension property near $\Gamma$. For $u \in BV(\Omega)$, the following conditions are equivalent: \vspace{2mm} \\ 
(1) $u$ is a minimiser of the functional $\mathcal{J}_\Gamma$ $($in other words, $0 \in \partial\mathcal{J}_\Gamma(u))$; \\
(2) $u$ is a solution to the $1-$Laplace equation in the sense of Definition \ref{dfn:1laplace}. 
\end{theorem}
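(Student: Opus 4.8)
The plan is to prove the two implications separately. For the direction $(2) \Rightarrow (1)$, suppose $u$ is a solution in the sense of Definition \ref{dfn:1laplace} with associated vector field $\mathbf{z}$. I would take any competitor $v \in BV(\Omega)$ and compare $\mathcal{J}_\Gamma(u)$ with $\mathcal{J}_\Gamma(v)$. The idea is to use the vector field $\mathbf{z}$ as a calibration. Applying Green's formula (Proposition \ref{prop:boundonanzelottipairing}) to $\mathbf{z}$ and both $u$ and $v$, and using $\mathrm{div}(\mathbf{z}) = 0$, the conditions $(\mathbf{z}, Du) = |Du|_\phi$, the pointwise bound $\left| \int_B (\mathbf{z}, Dv) \right| \leq \int_B |Dv|_\phi$, and the boundary conditions $[\mathbf{z},\nu^\Omega] \in \mathrm{sign}(f-u)\phi(\cdot,\nu^\Omega)$ on $\Gamma$ together with $[\mathbf{z},\nu^\Omega] = 0$ on $\partial\Omega \setminus \Gamma$, one should obtain $\mathcal{J}_\Gamma(u) = -\int_{\partial\Omega} f\,[\mathbf{z},\nu^\Omega]\,d\mathcal{H}^{N-1}$. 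Since $-\mathbf{z} \in \mathcal{A}'_\Gamma$, this value is dominated by $\sup \eqref{eq:reduceddual} = \inf \eqref{eq:primal}$, whence $u$ is a minimiser. Care is needed to verify that $-\mathbf{z}$ indeed lies in $\mathcal{A}'_\Gamma$ and that the total-variation lower bound for $v$ combines correctly with its boundary term.

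For the harder direction $(1) \Rightarrow (2)$, the obstacle is precisely that a minimiser $u$ may lie in $BV(\Omega) \setminus W^{1,1}(\Omega)$, so the clean extremality relations of the Remark do not apply directly. My plan is to exploit the $\varepsilon$-subdifferentiability property from the Ekeland--Temam framework. Let $p^* \in \mathcal{A}_\Gamma$ be a solution of the dual problem \eqref{eq:finaldual}, which exists by the qualification condition already verified for $u_0 \equiv 0$. Since $u \in BV(\Omega)$ is a minimiser of the relaxed functional $\mathcal{J}_\Gamma$ whose value equals $\inf \eqref{eq:primal} = \sup \eqref{eq:dual}$, I would approximate $u$ by a sequence $u_n \in W^{1,1}(\Omega)$ with $u_n \to u$ in $L^1(\Omega)$ and $J_\Gamma(u_n) \to \mathcal{J}_\Gamma(u)$, as furnished by Proposition \ref{prop:approximation}. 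The key point is that each $u_n$ is an $\varepsilon_n$-minimiser of the primal problem over $W^{1,1}(\Omega)$ with $\varepsilon_n \to 0$, so the approximate extremality relations hold: the sums $E(Au_n) + E^*(-p^*) - \langle -p^*, Au_n\rangle$ and the analogous $G$-term are each at most $\varepsilon_n$.

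The execution then mirrors the Remark but with error terms. Writing out $E(Au_n) + E^*(-p^*) + \langle p^*, Au_n \rangle \leq \varepsilon_n$ and splitting into the boundary and interior contributions, I would obtain
\begin{equation*}
\int_{\Gamma} \bigl( \phi(x,\nu^\Omega)|u_n - f| + p_0^*(u_n - f) \bigr)\,d\mathcal{H}^{N-1} + \int_\Omega \bigl( \phi(x,\nabla u_n) + \overline{p}^* \cdot \nabla u_n \bigr)\,dx \leq \varepsilon_n,
\end{equation*}
where both integrands are nonnegative by $|p_0^*| \leq \phi(x,\nu^\Omega)$ on $\Gamma$ and $\phi^0(x,\overline{p}^*) \leq 1$ in $\Omega$. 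Setting $\mathbf{z} = -\overline{p}^*$, this already fixes $\mathbf{z}$ as the candidate vector field, independent of $n$; its divergence-free condition \eqref{eq:divergencecondition} and the boundary trace conditions follow from membership in $\mathcal{A}_\Gamma$ together with $p_0^* = -[\overline{p}^*,\nu^\Omega]$. The remaining and most delicate task is to pass to the limit $n \to \infty$ to recover the measure equality $(\mathbf{z}, Du) = |Du|_\phi$ in \eqref{eq:directionalcondition} and the sign condition on $\Gamma$. Here I would use weak-$*$ convergence of $Du_n$ to $Du$ as measures, lower semicontinuity of the $\phi$-total variation, the bound from Proposition \ref{prop:boundonanzelottipairing} giving $\int_\Omega (\mathbf{z}, Du) \leq \int_\Omega |Du|_\phi$, and the reverse inequality extracted from the vanishing of the interior error term in the limit; the boundary sign condition should follow by combining convergence of traces with lower semicontinuity of the $\Gamma$-integral. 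The principal obstacle is controlling the boundary trace term in the limit, since traces are not continuous under $L^1$ convergence, and this is exactly where the structure guaranteed by Proposition \ref{prop:approximation} (that $u_n = f$ on $\Gamma$ along the approximating sequence, or that the defect concentrates as a controlled boundary mass) must be invoked carefully.
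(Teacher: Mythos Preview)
Your overall strategy matches the paper's: duality plus $\varepsilon$-subdifferentiability for $(1)\Rightarrow(2)$, and a calibration argument for $(2)\Rightarrow(1)$. The main divergence is your choice of approximating sequence in $(1)\Rightarrow(2)$, and your description of the limit passage there is where the argument becomes muddled.

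For $(1)\Rightarrow(2)$, the paper does \emph{not} use Proposition~\ref{prop:approximation}. It uses Lemma~\ref{lem:anisotropicstrictapproximation}, i.e.\ the $\phi$-strict approximation $u_n \in W^{1,1}(\Omega)$ with the \emph{same trace as $u$} on all of $\partial\Omega$. This single choice decouples the two conditions cleanly: since $u_n|_{\partial\Omega} = u|_{\partial\Omega}$ for every $n$, the boundary integral $\int_\Gamma (\phi(x,\nu^\Omega)|u_n-f| + p_0^*(u_n-f))\,d\mathcal{H}^{N-1}$ is constant in $n$, hence bounded above by every $\varepsilon_n$, hence zero --- this gives the sign condition on $\Gamma$ immediately. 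For the interior term, the fixed trace and Green's formula give $\int_\Omega \overline{p}^*\cdot\nabla u_n = \int_{\partial\Omega}[\overline{p}^*,\nu^\Omega]\,u_n = \int_{\partial\Omega}[\overline{p}^*,\nu^\Omega]\,u = \int_\Omega(\overline{p}^*,Du)$ for every $n$, so the remaining inequality becomes $0 \le \int_\Omega \phi(x,\nabla u_n) - \int_\Omega(-\overline{p}^*,Du) \le \varepsilon_n$, and $\phi$-strict convergence finishes.

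Your route via Proposition~\ref{prop:approximation} can be salvaged, but your account of it is not right. With $u_n|_\Gamma = f$ the boundary term in the $\varepsilon$-inequality is \emph{identically zero}, so there is no ``convergence of traces'' issue and no ``lower semicontinuity of the $\Gamma$-integral'' to invoke --- the boundary term simply disappears and tells you nothing about the sign condition directly. What you would actually have to do is: pass to the limit in the remaining interior inequality to get $\mathcal{J}_\Gamma(u) + \int_\Gamma[\overline{p}^*,\nu^\Omega]\,f \le 0$, then apply Green's formula to $u$ itself to rewrite this as the sum of two nonnegative quantities $\bigl(\int_\Omega|Du|_\phi - \int_\Omega(\mathbf{z},Du)\bigr) + \bigl(\int_\Gamma\phi|u-f| - \int_\Gamma[\mathbf{z},\nu^\Omega](f-u)\bigr) \le 0$, forcing both to vanish. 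This works, but is a detour compared to the paper's trace-fixing trick.

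For $(2)\Rightarrow(1)$ your calibration-through-the-dual argument is fine (up to a sign: you should get $\mathcal{J}_\Gamma(u) = \int_\Gamma f\,[\mathbf{z},\nu^\Omega]$, which is the value of \eqref{eq:reduceddual} at $-\mathbf{z}$). The paper instead compares $u$ directly against competitors $w\in W^{1,1}(\Omega)$ with $w|_\Gamma = f$ via Green's formula, obtaining $\mathcal{J}_\Gamma(u) \le \mathcal{J}_\Gamma(w)$, and then extends to general $w\in BV(\Omega)$ using Proposition~\ref{prop:approximation}. Both routes are short.
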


We need the Lipschitz extension property near $\Gamma$ so that the functional $\mathcal{J}_\Gamma$ is lower semicontinuous; it does not enter the proof directly. Moreover, this Theorem implies that under the Lipschitz extension property the $1-$Laplace equation admits a solution in the sense of Definition \ref{dfn:1laplace}.

\begin{proof}
$(1) \Rightarrow (2)$. Suppose that $p^*$ is a solution of the dual problem. We will prove that the vector field $\mathbf{z} = -\overline{p}^*$ satisfies all the conditions in Definition \ref{dfn:1laplace}. We see immediately that it satisfies the divergence constraint and that $[-\overline{p}^*,\nu^\Omega] = 0$ $\mathcal{H}^{N-1}$-a.e. on $\partial\Omega\backslash\Gamma$. Now, since $\mathcal{J}_\Gamma$ is lower semicontinuous, we may use the $\varepsilon-$subdifferentiability property of minimising sequences, see \cite[Proposition V.1.2]{ET}: for any minimising sequence $u_n$ for \eqref{eq:primal} and a maximiser $p^*$ of \eqref{eq:dual}, we have
\begin{equation}\label{eq:epsilonsubdiff1}
0 \leq E(Au_n) + E^*(-p^*) - \langle -p^*, Au_n \rangle \leq \varepsilon_n
\end{equation}
\begin{equation}\label{eq:epsilonsubdiff2}
0 \leq G(u_n) + G^*(A^* p^*) - \langle u_n, A^* p^* \rangle \leq \varepsilon_n
\end{equation}
with $\varepsilon_n \rightarrow 0$. Now, let $u \in BV(\Omega)$ be a minimiser of $\mathcal{J}_\Gamma$. Let us take the sequence $u_n \in W^{1,1}(\Omega)$ given by Lemma \ref{lem:anisotropicstrictapproximation}, i.e. it has the same trace as $u$ and converges $\phi-$strictly to $u$; then, it is a minimising sequence in \eqref{eq:primal}. Equation \eqref{eq:epsilonsubdiff2} is automatically satisfied and equation \eqref{eq:epsilonsubdiff1} gives
\begin{equation}
0 \leq \int_{\Gamma} \bigg( \phi(x,\nu^\Omega) |u_n - f| + p_0^* (u_n - f)  \bigg) \, d\mathcal{H}^{N-1} + \int_\Omega \bigg( \phi(x,\nabla u_n) + \overline{p}^* \cdot \nabla u_n \bigg) \, dx \leq \varepsilon_n.
\end{equation}
Because the trace of $u_n$ is fixed (and equal to the trace of $u$), the integral on $\Gamma$ does not change with $n$; hence, it has to equal zero. Keeping in mind that $p_0^* = [- \overline{p}^*, \nu^\Omega]$, we get
$$ [-\overline{p}^*, \nu^\Omega] \in \mbox{sign}(f - u) \phi(x,\nu^\Omega) \quad \mathcal{H}^{N-1}-\mbox{a.e. on } \Gamma.$$
The integral on $\Omega$ changes with $n$; since the first integral is zero, we have
\begin{equation}\label{eq:epsilonsubdiff3}
0 \leq \int_\Omega \bigg( \phi(x,\nabla u_n) + \overline{p}^* \cdot \nabla u_n \bigg) \, dx \leq \varepsilon_n.
\end{equation}
Finally, keeping in mind that $\mathrm{div}(\overline{p}^*) = 0$ and again using the fact that the trace of $u_n$ is fixed and equal to the trace of $u$, by Green's formula we get
$$ \int_\Omega \overline{p}^* \cdot \nabla u_n \, dx = \int_\Omega (\overline{p}^*, \nabla u_n \, dx) = \int_{\partial\Omega} [\overline{p}^*, \nu^\Omega] \, u_n \, d\mathcal{H}^{N-1} = \int_{\partial\Omega} [\overline{p}^*, \nu^\Omega] \, u \, d\mathcal{H}^{N-1} = \int_\Omega (\overline{p}^*, Du). $$
Hence, equation \eqref{eq:epsilonsubdiff3} takes the form
\begin{equation*}
0 \leq \int_\Omega \phi(x,\nabla u_n) \, dx - \int_\Omega (-\overline{p}^*, Du)   \leq \varepsilon_n
\end{equation*}
and since $u_n$ converges $\phi-$strictly to $u$, we get that
$$ \int_\Omega |Du|_\phi - \int_\Omega (-p^*, Du) = \lim_{n \rightarrow \infty} \bigg( \int_\Omega \phi(x,\nabla u_n) \, dx - \int_\Omega (-\overline{p}^*, Du) \bigg) = 0.$$
This together with Proposition \ref{prop:boundonanzelottipairing} implies that 
\begin{equation*}
(-\overline{p}^*, Du) = |Du|_\phi \quad \mbox{as measures in } \Omega,
\end{equation*}
so the pair $(u, -\overline{p}^*)$ satisfies all the conditions in Definition \ref{dfn:1laplace}. \vspace{2mm}

$(2) \Rightarrow (1)$. Suppose that $\mathbf{z}$ is the vector field from Definition \ref{dfn:1laplace}. Suppose that $w \in W^{1,1}(\Omega)$ and $w|_{\Gamma} = f$. By Green's formula
\begin{equation*}
0 = \int_{\Omega} (w-u) \mathrm{div}(\mathbf{z}) \, dx = -\int_\Omega (\mathbf{z}, Dw) + \int_\Omega (\mathbf{z}, Du) + \int_{\partial\Omega} [\mathbf{z},\nu^\Omega] (w-u) \, d\mathcal{H}^{N-1} = \qquad\qquad\qquad
\end{equation*}
\begin{equation*}
\qquad\qquad\qquad\qquad\qquad\qquad\qquad\qquad\qquad = -\int_\Omega (\mathbf{z}, Dw) + \int_\Omega (\mathbf{z}, Du) + \int_{\Gamma} [\mathbf{z},\nu^\Omega] (f-u) \, d\mathcal{H}^{N-1}.
\end{equation*}
We reorganise this equation to get
\begin{equation}\label{eq:laplaceimpliesminimum}
\mathcal{J}_\Gamma(u) = \int_\Omega (\mathbf{z}, Du) + \int_{\Gamma} [\mathbf{z},\nu^\Omega] (f-u) \, d\mathcal{H}^{N-1} = \int_\Omega (\mathbf{z}, Dw) \leq \int_\Omega |Dw|_\phi = \mathcal{J}_\Gamma(w),
\end{equation}
where the inequality follows from the fact that $\phi^0(x,\mathbf{z}(x)) \leq 1$ a.e. in $\Omega$. 

Now, suppose that $w \in BV(\Omega)$. By Proposition \ref{prop:approximation} there exists a sequence $w_n \in W^{1,1}(\Omega)$ with $w_n = f$ on $\Gamma$ such that $\mathcal{J}_\Gamma(w_n) \rightarrow \mathcal{J}_\Gamma(w)$. By equation \eqref{eq:laplaceimpliesminimum}, we have
\begin{equation*}
\mathcal{J}_\Gamma(u) \leq \mathcal{J}_\Gamma(w_n) \rightarrow \mathcal{J}_\Gamma(w),
\end{equation*}
so $u$ is a minimiser of the functional $\mathcal{J}_\Gamma$.
\end{proof}


In particular, because in the proof of Theorem \ref{thm:characterisation} we can use any solution of the dual problem, the structure of all solutions to \eqref{eq:variationalsolution} is determined by a (not uniquely chosen) single vector field. In the context of the standard least gradient problem this result has been first observed in \cite{MRL}. A proof using a duality-based approach can be found in \cite{Mor}; note that in order to deal with the case $\Gamma \neq \partial\Omega$ we need to use a different dualisation from the one used in \cite{Mor}. However, in the standard least gradient problem both methods lead to similar results and the dual problem in \cite{Mor} corresponds to the reduced dual problem \eqref{eq:reduceddual} in this paper. This structure result is formalised in the following Corollary; we will come back to the discussion about structure of solutions in Examples \ref{ex:square} and \ref{ex:structure}.

\begin{corollary}\label{cor:singlevectorfield}
The structure of all solutions to \eqref{eq:variationalsolution} is determined by a single vector field: if $p^*$ is a solution to the dual problem, then $-\overline{p}^*$ satisfies Definition \ref{dfn:1laplace} for any minimiser $u$ of the functional $\mathcal{J}_\Gamma$.
\end{corollary}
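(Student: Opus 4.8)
The plan is to observe that the implication $(1) \Rightarrow (2)$ in the proof of Theorem \ref{thm:characterisation} already yields this statement, once the order of the quantifiers is made explicit. First I would fix, once and for all, a single maximiser $p^*$ of the dual problem \eqref{eq:dual}; its existence was established above, since $\inf\eqref{eq:primal} = \sup\eqref{eq:dual}$ and the constraint qualification at $u_0 \equiv 0$ holds. The candidate vector field is then $\mathbf{z} = -\overline{p}^*$, and three of its four defining properties in Definition \ref{dfn:1laplace} — the divergence condition \eqref{eq:divergencecondition}, the bound $\phi^0(x,\overline{p}^*(x)) \leq 1$ a.e., and the vanishing of $[-\overline{p}^*,\nu^\Omega]$ on $\partial\Omega \backslash \Gamma$ — are intrinsic to $p^*$ and make no reference to any minimiser. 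Only the two conditions \eqref{eq:directionalcondition} and the sign relation on $\Gamma$ involve $u$, and these are exactly what the argument must reproduce for an arbitrary $u$.

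Next I would take an arbitrary minimiser $u \in BV(\Omega)$ of $\mathcal{J}_\Gamma$ and run the argument from $(1) \Rightarrow (2)$ verbatim with this one fixed $p^*$. The crucial tool is the $\varepsilon$-subdifferentiability property \cite[Proposition V.1.2]{ET}, which decouples the two choices: it guarantees that for \emph{any} minimising sequence $u_n$ for \eqref{eq:primal} and the fixed maximiser $p^*$, the inequalities \eqref{eq:epsilonsubdiff1}--\eqref{eq:epsilonsubdiff2} hold with some $\varepsilon_n \to 0$. In particular, with $p^*$ frozen I am free to choose, for each given minimiser $u$, the approximating sequence $u_n \in W^{1,1}(\Omega)$ supplied by Lemma \ref{lem:anisotropicstrictapproximation}, which shares the trace of $u$ and converges $\phi$-strictly to it; this is a minimising sequence. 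Feeding it into \eqref{eq:epsilonsubdiff1} forces the constant $\Gamma$-integral to vanish, yielding the sign condition $[-\overline{p}^*,\nu^\Omega] \in \mathrm{sign}(f-u)\,\phi(\cdot,\nu^\Omega)$ on $\Gamma$, and then Green's formula together with $\phi$-strict convergence gives $(-\overline{p}^*, Du) = |Du|_\phi$. Thus the same $-\overline{p}^*$ verifies Definition \ref{dfn:1laplace} for this $u$, and since $u$ was arbitrary, for every minimiser.

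The only point requiring care — and the closest thing to an obstacle — is confirming that nothing in this chain secretly forces $p^*$ to depend on $u$. This is precisely why one works with the $\varepsilon$-subdifferentiability formulation rather than the exact extremality relations \eqref{eq:extremality1}--\eqref{eq:extremality2}: the latter presuppose an exact primal minimiser in $W^{1,1}(\Omega)$, which need not exist, whereas the former hold for the single fixed dual maximiser against every minimising sequence simultaneously. Since the trace integral over $\Gamma$ in \eqref{eq:epsilonsubdiff1} is constant along $u_n$ (all terms share the trace of $u$), it must equal zero, and the residual volume term converges by $\phi$-strict convergence; neither step reintroduces a dependence of $p^*$ on $u$. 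This establishes the Corollary, and in fact the proof is nothing more than the remark that the construction in Theorem \ref{thm:characterisation} is uniform in the minimiser.
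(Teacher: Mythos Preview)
Your proposal is correct and follows exactly the paper's own approach: the paper treats the Corollary as an immediate observation, noting that ``in the proof of Theorem \ref{thm:characterisation} we can use any solution of the dual problem,'' and you have simply made explicit the quantifier structure showing that the fixed dual maximiser $p^*$ works uniformly for every minimiser $u$. Your careful remark about why the $\varepsilon$-subdifferentiability formulation (rather than the exact extremality relations) is what makes this uniformity transparent is a nice clarification, but the substance is identical to the paper's reasoning.
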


Now, let us see that solutions in the sense of Definition \eqref{dfn:1laplace} are functions of $\phi$-least gradient; this is formalised in the next Proposition. Denote by $u_\Gamma$ the restriction of the trace of $u$ to $\Gamma$.

\begin{proposition}
If $u$ satisfies Definition \ref{dfn:1laplace} and $u_{\Gamma} = f$, then it is a function of $\phi$-least gradient.
\end{proposition}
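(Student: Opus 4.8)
The plan is to use the vector field $\mathbf{z}$ supplied by Definition \ref{dfn:1laplace} as a calibration and to test the $\phi$-least gradient inequality against an arbitrary competitor. First I would observe that $\mathbf{z} \in L^\infty(\Omega;\mathbb{R}^N)$ with $\mathrm{div}(\mathbf{z}) = 0 \in L^N(\Omega)$, so $\mathbf{z} \in X_N(\Omega)$; consequently Proposition \ref{prop:boundonanzelottipairing} applies, giving both the pointwise bound on the Anzelotti pairing and the Green's formula for $\mathbf{z}$.

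Fix a compactly supported $v \in BV(\Omega)$; the goal is to show $\int_\Omega |Du|_\phi \leq \int_\Omega |D(u+v)|_\phi$. Since $v$ has compact support in $\Omega$, its trace on $\partial\Omega$ vanishes, so Green's formula combined with $\mathrm{div}(\mathbf{z}) = 0$ yields $\int_\Omega (\mathbf{z}, Dv) = \int_{\partial\Omega}[\mathbf{z},\nu^\Omega]\, v \, d\mathcal{H}^{N-1} - \int_\Omega v\, \mathrm{div}(\mathbf{z})\, dx = 0$.

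Next I would exploit the linearity of the pairing in its second argument, which is immediate from the distributional definition $\langle (\mathbf{z}, Dw), \varphi \rangle = -\int_\Omega w \varphi\, \mathrm{div}(\mathbf{z})\, dx - \int_\Omega w\, \mathbf{z}\cdot\nabla\varphi\, dx$, to write $\int_\Omega (\mathbf{z}, D(u+v)) = \int_\Omega (\mathbf{z}, Du) + \int_\Omega (\mathbf{z}, Dv) = \int_\Omega |Du|_\phi$, using the directional condition \eqref{eq:directionalcondition} and the vanishing just established. On the other hand, since $\phi^0(x,\mathbf{z}(x)) \leq 1$ a.e., Proposition \ref{prop:boundonanzelottipairing} gives $\int_\Omega (\mathbf{z}, D(u+v)) \leq \int_\Omega |D(u+v)|_\phi$. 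Chaining these two relations produces exactly $\int_\Omega |Du|_\phi \leq \int_\Omega |D(u+v)|_\phi$, which is the defining inequality for a function of $\phi$-least gradient.

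I do not anticipate a serious obstacle: once the pairing is available, the argument is essentially a one-line calibration, and the only points requiring care are the membership $\mathbf{z} \in X_N(\Omega)$ and the additivity of the pairing. I would remark in passing that the hypothesis $u_\Gamma = f$ is not actually used in establishing the $\phi$-least gradient property; it enters only to upgrade $u$ to a solution of \eqref{problem} in the strong sense. Thus the conclusion is really a statement about the interior minimality encoded by \eqref{eq:divergencecondition} and \eqref{eq:directionalcondition} alone.
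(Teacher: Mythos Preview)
Your argument is correct and is a genuinely different route from the paper's. The paper does not calibrate directly: it invokes the equivalence in Theorem~\ref{thm:characterisation} to say that $u$ minimises $\mathcal{J}_\Gamma$, and then for a competitor $v$ with the same trace on $\Gamma$ observes $\int_\Omega |Du|_\phi = \mathcal{J}_\Gamma(u) \leq \mathcal{J}_\Gamma(v) = \int_\Omega |Dv|_\phi$. That is a two-line reduction, but it leans on the whole duality machinery and, through Proposition~\ref{prop:approximation} in the $(2)\Rightarrow(1)$ direction, on the Lipschitz extension property near $\Gamma$. Your calibration argument is more self-contained: it uses only $\mathrm{div}(\mathbf{z})=0$, $(\mathbf{z},Du)=|Du|_\phi$, $\phi^0(\cdot,\mathbf{z})\leq 1$, and the Anzelotti Green formula, and needs no structural assumption on $\Gamma$. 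Your closing remark is also sharper than the paper's statement: the hypothesis $u_\Gamma=f$ is indeed irrelevant for the interior $\phi$-least-gradient inequality in your approach, whereas the paper's proof genuinely uses it to kill the boundary term in $\mathcal{J}_\Gamma$.
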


\begin{proof}
Let $v$ be a competitor in the definition of a function of $\phi$-least gradient. Then, because $u_\Gamma = v_\Gamma = f$, we have
$$ \int_\Omega |Du|_\phi = \mathcal{J}_\Gamma(u) \leq \mathcal{J}_\Gamma(v) = \int_\Omega |Dv|_\phi,$$
so $u$ is a function of $\phi$-least gradient.
\end{proof}

When $\Gamma = \partial\Omega$, the converse implication is also true, see \cite[Corollary 3.9]{Maz}, but it may fail when the boundary condition is imposed only on a subset of the boundary. To be more precise, we first state the following Proposition.

\begin{proposition}\label{prop:extendinggamma}
Suppose that $\Gamma \subset \Gamma' \subset \partial\Omega$. Let $f \in L^1(\Gamma)$. Suppose that $u \in BV(\Omega)$ is a solution to problem \eqref{eq:variationalsolution} for boundary data $f$ defined on $\Gamma$. Then, $u$ is a solution to problem \eqref{eq:variationalsolution} for boundary data $\widetilde{f} \in L^1(\Gamma')$ defined by the formula
$$ \widetilde{f} = \twopartdef{f}{\mbox{ on } \Gamma}{u}{\mbox{ on } \Gamma' \backslash \Gamma.}$$
\end{proposition}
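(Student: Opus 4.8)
The plan is to exploit the fact, recorded before \eqref{eq:variationalsolution}, that a solution to \eqref{eq:variationalsolution} is exactly a minimiser of the associated relaxed functional, and then to compare the functional $\mathcal{J}_\Gamma$ with its analogue $\mathcal{J}_{\Gamma'}$ (the functional obtained by replacing $\Gamma$ with $\Gamma'$ and $f$ with $\widetilde{f}$) directly, without ever constructing a new competitor. Since $u$ solves \eqref{eq:variationalsolution} for $f$ on $\Gamma$, it minimises $\mathcal{J}_\Gamma$ over $BV(\Omega)$, and the entire task reduces to showing that $u$ also minimises $\mathcal{J}_{\Gamma'}$. First I would note that $\widetilde{f}$ is well defined and lies in $L^1(\Gamma')$: on $\Gamma$ it equals $f \in L^1(\Gamma)$, and on $\Gamma' \backslash \Gamma$ it equals the trace of $u$, which belongs to $L^1(\partial\Omega)$.

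The key step is a decomposition of the boundary integral. Because $\Gamma$ and $\Gamma' \backslash \Gamma$ partition $\Gamma'$, and $\widetilde{f} = f$ on $\Gamma$ while $\widetilde{f} = u$ on $\Gamma' \backslash \Gamma$, for every $v \in BV(\Omega)$ we have
\begin{equation*}
\mathcal{J}_{\Gamma'}(v) = \int_\Omega |Dv|_\phi + \int_\Gamma \phi(x,\nu^\Omega)\,|v - f|\,d\mathcal{H}^{N-1} + \int_{\Gamma' \backslash \Gamma} \phi(x,\nu^\Omega)\,|v - u|\,d\mathcal{H}^{N-1}.
\end{equation*}
The first two terms constitute exactly $\mathcal{J}_\Gamma(v)$, so
\begin{equation*}
\mathcal{J}_{\Gamma'}(v) = \mathcal{J}_\Gamma(v) + \int_{\Gamma' \backslash \Gamma} \phi(x,\nu^\Omega)\,|v - u|\,d\mathcal{H}^{N-1}.
\end{equation*}
Since $\phi(x,\nu^\Omega) \geq \lambda|\nu^\Omega| \geq 0$, the additional integral is nonnegative, whence $\mathcal{J}_{\Gamma'}(v) \geq \mathcal{J}_\Gamma(v) \geq \mathcal{J}_\Gamma(u)$, the last inequality being the minimality of $u$ for $\mathcal{J}_\Gamma$.

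To conclude, I would evaluate the identity at $v = u$: the extra integral then vanishes because the traces coincide, giving $\mathcal{J}_{\Gamma'}(u) = \mathcal{J}_\Gamma(u)$. Combining this with the previous inequality yields $\mathcal{J}_{\Gamma'}(v) \geq \mathcal{J}_{\Gamma'}(u)$ for all $v \in BV(\Omega)$, so $u$ minimises $\mathcal{J}_{\Gamma'}$, i.e. $0 \in \partial\mathcal{J}_{\Gamma'}(u)$, which is precisely problem \eqref{eq:variationalsolution} for the datum $\widetilde{f}$ on $\Gamma'$.

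Because this route directly certifies $u$ as a global minimiser rather than manufacturing one, there is no obstacle of real substance; the only points demanding care are the disjoint decomposition of $\Gamma'$ and the integrability of $\widetilde{f}$. As an alternative I would record the dual viewpoint through Definition \ref{dfn:1laplace} and Theorem \ref{thm:characterisation}: the vector field $\mathbf{z}$ associated with $u$ on $\Gamma$ already satisfies the divergence, bound, and directional conditions inside $\Omega$, and has $[\mathbf{z},\nu^\Omega] = 0$ on $\partial\Omega \backslash \Gamma \supset \partial\Omega \backslash \Gamma'$; on $\Gamma' \backslash \Gamma$ one has $\widetilde{f} - u = 0$, so the required membership $[\mathbf{z},\nu^\Omega] \in \mathrm{sign}(\widetilde{f}-u)\,\phi(x,\nu^\Omega) = [-\phi(x,\nu^\Omega),\phi(x,\nu^\Omega)]$ holds trivially, and the same $\mathbf{z}$ witnesses that $u$ solves the problem on $\Gamma'$.
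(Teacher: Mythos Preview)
Your proof is correct. The paper's own argument is precisely what you record as the ``alternative'' at the end: it takes the vector field $\mathbf{z}$ from Definition \ref{dfn:1laplace} associated with $u$ on $\Gamma$ and observes in one line that the same $\mathbf{z}$ verifies all the conditions of Definition \ref{dfn:1laplace} for the datum $\widetilde{f}$ on $\Gamma'$ (the only new check being that $[\mathbf{z},\nu^\Omega]=0$ lies in $\mathrm{sign}(0)\,\phi(x,\nu^\Omega)$ on $\Gamma'\backslash\Gamma$).

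Your primary route---the direct decomposition $\mathcal{J}_{\Gamma'}(v)=\mathcal{J}_\Gamma(v)+\int_{\Gamma'\backslash\Gamma}\phi(x,\nu^\Omega)\,|v-u|\,d\mathcal{H}^{N-1}$ followed by the two inequalities---is genuinely different and arguably cleaner: it uses only the convex-analytic equivalence ``$0\in\partial\mathcal{J}_\Gamma(u)$ iff $u$ minimises $\mathcal{J}_\Gamma$'' and never invokes Theorem \ref{thm:characterisation}, hence does not rely on the Lipschitz extension property near $\Gamma$ or $\Gamma'$. The paper's vector-field route, by contrast, needs Theorem \ref{thm:characterisation} both to obtain $\mathbf{z}$ from the hypothesis and to pass from $\mathbf{z}$ back to the conclusion $0\in\partial\mathcal{J}_{\Gamma'}(u)$, so it implicitly requires that geometric assumption for both $\Gamma$ and $\Gamma'$. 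What the paper's approach buys is brevity and a direct link to the dual object, which fits the surrounding narrative about the vector field encoding the structure of all solutions.
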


\begin{proof}
Take $\mathbf{z} \in X_N(\Omega)$ relative to $u$ given by Definition \ref{dfn:1laplace}. We immediately see that $\mathbf{z}$ satisfies the conditions in Definition \ref{dfn:1laplace} for boundary data $\widetilde{f} \in L^1(\Gamma')$.
\end{proof}

In particular, the above Proposition implies that any solution to problem \eqref{eq:variationalsolution} is a solution to the least gradient problem defined on $\Gamma = \partial\Omega$ for some boundary data. However, in the other direction the implication is not true; if we restrict the boundary data to a subset of the boundary, solutions to problem \eqref{eq:variationalsolution} for $\partial\Omega$ may fail to be solutions for $\Gamma$. 

\begin{example}
Let $\Omega = B(0,1) \subset \mathbb{R}^2$. Let $\phi(x,\xi) = |\xi|$. Take
$$ \Gamma = \{ (x,y) \in \partial\Omega: y < 0 \}$$
and let $f \in C(\Gamma)$ be given by $f \equiv 0$. Let $u \in BV(\Omega)$ be given by the formula
$$ u(x,y) = \twopartdef{0}{\mbox{if } y < 0}{1}{\mbox{if } y > 0.}$$
Then $u$ is a function of least gradient, it satisfies $u_{\Gamma} = f$, but it is not a solution to Problem \ref{dfn:1laplace}; the only solution to Problem \ref{dfn:1laplace} is a function constant and equal to zero.
\end{example}

\section{Enforcing the trace condition}\label{sec:strongformulation}

In this Section, we are interested in the relationship between solutions to problem \eqref{problem} and problem \eqref{eq:variationalsolution}. To be more precise, we will prove that under some geometric assumptions on $\Omega$ and regularity assumptions on $f$, solutions to problem \eqref{eq:variationalsolution} attain the boundary condition in the sense of traces, so are in fact solutions to problem \eqref{problem}; in particular, solutions to problem \eqref{problem} exist. Let us stress that this does not automatically happen in the standard least gradient problem (when $\Gamma = \partial\Omega$); solutions in the Euler-Lagrange sense always exist, see \cite{MRL}, but solutions in the sense of traces may not exist if the boundary datum is not sufficiently regular, see \cite{ST}. In the second part of this Section, we study uniqueness and continuity properties of solutions to problem \eqref{problem}. These properties heavily depend on the shape of $\Gamma$ and we provide multiple examples in which the uniqueness and regularity properties known in the standard least gradient problem may fail. In particular, in order to have any positive results we need to assume that $\Gamma$ is connected.

\subsection{Existence in the trace sense}

A key geometric assumption in the anisotropic least gradient problem, which ensures that solutions are attained in the sense of traces for continuous boundary data, is the barrier condition introduced in \cite[Definition 3.1]{JMN}. In order to proceed, we will need a few versions of this condition; in the notation introduced below, the condition in \cite{JMN} the global barrier condition.

\begin{definition}\label{def:barrier}
Let $\Omega \subset \mathbb{R}^N$ be an open bounded set with Lipschitz boundary and let $x_0 \in \partial\Omega$. \\
(1) We say that $\Omega$ satisfies the pointwise barrier condition at $x_0$, if for sufficiently small $\varepsilon > 0$, if $V$ minimises $P_\phi(\cdotp; \mathbb{R}^N)$ in 
\begin{equation}\label{eq:barrier}
\{ W \subset \Omega: W \backslash B(x_0, \varepsilon) = \Omega \backslash B(x_0, \varepsilon) \},    
\end{equation}
then
$$\partial V^{(1)} \cap \partial\Omega \cap B(x_0, \varepsilon) = \emptyset.$$
(2) We say that $\Omega$ satisfies the barrier condition near $\Gamma \subset \partial\Omega$, if it satisfies the pointwise barrier condition at every $x_0 \in \Gamma$. \\
(3) We say that $\Omega$ satisfies the global barrier condition, if it satisfies the pointwise barrier condition at every $x_0 \in \partial\Omega$.
\end{definition}

We stress that the barrier condition depends on the choice of $\phi$. Keeping in mind that this definition is local in nature and $\Gamma \subset \partial\Omega$ is relatively open, we will modify the proof in \cite{JMN} to obtain existence of solutions to problem \eqref{problem} for continuous boundary data.

Now, let us give an illustration of the various versions of the barrier condition in the isotropic case, i.e. when $\phi(x,\xi) = |\xi|$ (an anisotropic discussion on the global barrier condition can be found in \cites{Gor2019IUMJ,Gor2020NA,JMN}). Assume that $\Omega \subset \mathbb{R}^2$; in two dimensions, the only connected minimal surfaces are line segments, so the boundaries of (isotropic) area-minimising sets are unions of line segments and the geometrical situation is easier to read. Then, the global barrier condition is equivalent to strict convexity of $\Omega$. When $\Gamma \subset \partial\Omega$ is a finite union of disjoint arcs, the barrier condition near $\Gamma$ is equivalent to positive mean curvature on a dense subset of $\Gamma$. Note that outside $\Gamma$ we do not assume anything about the shape of the domain, in particular the domain in Figure \ref{fig:barrierconditionneargamma} (with $\Gamma = \Gamma_1 \cup \Gamma_2$) satisfies the barrier condition near $\Gamma$, but does not satisfy the barrier condition near $\partial\Omega$ (i.e. the version introduced in \cite{JMN}). Finally, notice that the pointwise barrier condition is satisfied when $\Omega \cap B(x_0,r)$ is strictly convex for sufficiently small $r$. The converse statement does not hold, as we can see by taking $\Omega$ to be a convex polygon; then, the pointwise barrier condition is satisfied exactly at its corners. 

\begin{figure}
    \centering
    \includegraphics[scale=0.21]{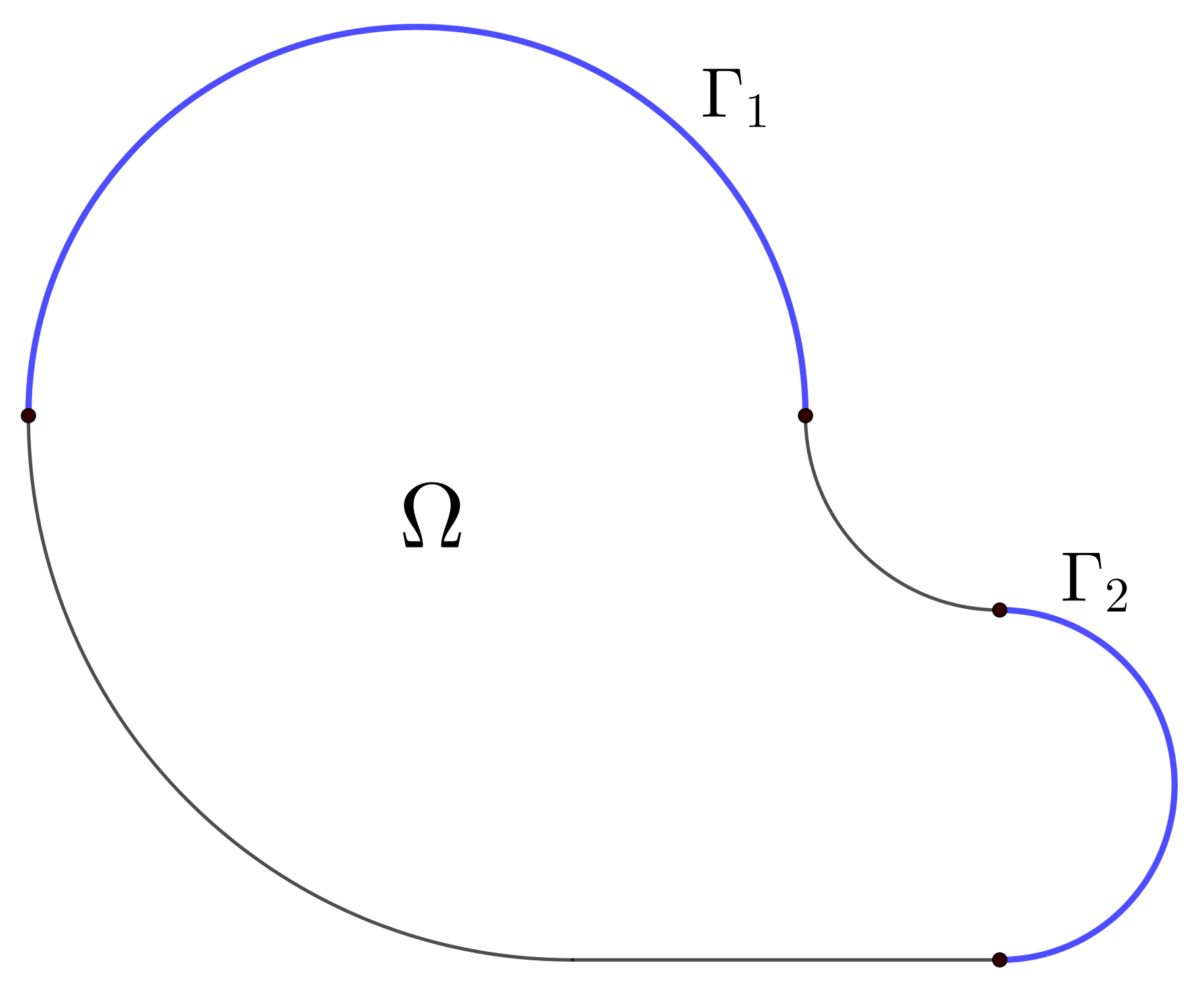} 
    \caption{Barrier condition near $\Gamma$}
    \label{fig:barrierconditionneargamma}
\end{figure}

Now, we prove two Lemmas that we will use in the proof of Theorem \ref{thm:existencetracesense}, which is the main result in this Section. The first Lemma is a corollary of Gagliardo's extension theorem, which gives us an extension with special properties near a fixed point $x_0 \in \partial\Omega$.

\begin{lemma}\label{lem:extension}
Suppose that $\Omega \subset \mathbb{R}^N$ is an open bounded set with Lipschitz boundary. Suppose that $g \in L^1(\partial\Omega)$. Suppose that $g$ is continuous at $x_0 \in \partial\Omega$. Then, there exists a function $\psi \in BV(\mathbb{R}^N \backslash \overline{\Omega})$ such that $\psi = g$ on $\partial\Omega$ and $\psi$ is continuous at $x_0$ in the following sense:
\begin{equation*}
\lim_{r \rightarrow 0} \mathrm{ess \, sup}_{y \in B(x_0,r) \backslash \overline{\Omega}} \, |\psi(y) - g(x_0)| = 0.    
\end{equation*}
\end{lemma}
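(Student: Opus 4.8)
The statement is purely local at $x_0$, so the plan is to build a clean \emph{averaged} Gagliardo-type extension in a neighbourhood of $x_0$ whose value is a genuine boundary average of $g$, and then to glue it to an arbitrary $BV$-extension away from $x_0$ by a cutoff, in such a way that the global trace remains $g$ everywhere. First I would fix a Lipschitz boundary chart $U$ around $x_0$ in which $\partial\Omega$ is a Lipschitz graph and $\mathbb{R}^N \setminus \overline{\Omega}$ lies on one side. For $y \in U \setminus \overline{\Omega}$ write $d(y) = \mathrm{dist}(y, \partial\Omega)$ and let $x_y \in \partial\Omega$ be a nearest boundary point. Define the local extension as the boundary average
\[
\psi_{\mathrm{loc}}(y) = \frac{1}{\mathcal{H}^{N-1}(\partial\Omega \cap B(x_y, c\,d(y)))} \int_{\partial\Omega \cap B(x_y, c\,d(y))} g \, d\mathcal{H}^{N-1},
\]
where $c$ is fixed large in terms of the Lipschitz constant of the graph, so that for small $d(y)$ the averaging sets have positive and comparable $\mathcal{H}^{N-1}$-measure.

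I would then establish three properties of $\psi_{\mathrm{loc}}$. First, its trace: as $y$ approaches a boundary point the averaging radius tends to $0$, so $\psi_{\mathrm{loc}}$ attains the value of $g$ at every Lebesgue point, hence $\psi_{\mathrm{loc}} = g$ on $\partial\Omega \cap U$ for $\mathcal{H}^{N-1}$-a.e. point. Second, the $BV$ bound: this averaged extension is precisely the one underlying the standard proof of Gagliardo's extension theorem, which yields $\int |D\psi_{\mathrm{loc}}| \lesssim \|g\|_{L^1(\partial\Omega \cap U)} < \infty$, so $\psi_{\mathrm{loc}} \in BV(U \setminus \overline{\Omega})$ (equivalently, finite $\phi$-total variation by comparability of $\phi$ with the Euclidean norm). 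Third, and crucially, the continuity at $x_0$: for $y \in B(x_0,r) \setminus \overline{\Omega}$ we have $d(y) \le |y - x_0| < r$ and $|x_y - x_0| \le |x_y - y| + |y - x_0| < 2r$, hence $\partial\Omega \cap B(x_y, c\,d(y)) \subset \partial\Omega \cap B(x_0, (2+c)r)$. Since $\psi_{\mathrm{loc}}(y)$ is a convex average of $g$ over this set,
\[
|\psi_{\mathrm{loc}}(y) - g(x_0)| \le \sup_{\,\partial\Omega \cap B(x_0,(2+c)r)} |g - g(x_0)| \xrightarrow{\,r \to 0\,} 0
\]
by continuity of $g$ at $x_0$, which is exactly the required essential-supremum continuity.

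To globalise, take a slightly larger bounded Lipschitz set $\Omega' \supset \overline{\Omega}$, so that the collar $\Omega' \setminus \overline{\Omega}$ is Lipschitz; by Gagliardo's theorem there is $\psi_{\mathrm{glob}}$ with trace $g$ on $\partial\Omega$ and $0$ on $\partial\Omega'$, which I extend by zero to obtain $\psi_{\mathrm{glob}} \in BV(\mathbb{R}^N \setminus \overline{\Omega})$ (this also resolves the unboundedness of the exterior, as $\psi_{\mathrm{glob}}$ is supported near $\partial\Omega$). I then choose a Lipschitz cutoff $\zeta$ with $\zeta \equiv 1$ on $B(x_0, \rho')$ and $\mathrm{supp}\,\zeta \subset U$, and set $\psi = \zeta \psi_{\mathrm{loc}} + (1-\zeta)\psi_{\mathrm{glob}}$. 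On $\partial\Omega$ the trace is $\zeta g + (1-\zeta) g = g$; the function $\psi$ lies in $BV(\mathbb{R}^N \setminus \overline{\Omega})$ as a sum of products of $BV$ functions with bounded Lipschitz functions; and on $B(x_0, \rho') \setminus \overline{\Omega}$ one has $\psi = \psi_{\mathrm{loc}}$, so the continuity at $x_0$ from the previous step transfers verbatim to $\psi$.

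The main obstacle is the finite total variation of the averaged extension $\psi_{\mathrm{loc}}$: this is the quantitative core of Gagliardo's theorem, and the one place where genuine estimates (using the Lipschitz graph structure to control the difference quotients of the averages and to guarantee that the averaging sets keep positive comparable measure uniformly near $x_0$) are needed. Once that is in hand, the trace identification, the ball-containment giving continuity, and the cutoff gluing are all soft.
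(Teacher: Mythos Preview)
Your argument is correct and takes a genuinely different route from the paper. The paper starts with an \emph{arbitrary} Gagliardo extension $\psi \in W^{1,1}(\mathbb{R}^N\setminus\overline{\Omega})$ and then forces continuity at $x_0$ by an iterative truncation: at the $n$th step one clips $\psi$ to the interval $[g(x_0)-\varepsilon_n,\,g(x_0)+\varepsilon_n]$ inside $B(x_0,\delta_n/2)$, where $\delta_n$ is chosen via continuity of $g$ so that the trace is unaffected; one then passes to the limit. Your approach instead bakes the continuity into the construction itself: you use the explicit averaged Gagliardo extension, observe that near $x_0$ the averaging patches stay inside $B(x_0,(2+c)r)\cap\partial\Omega$, and read off the ess-sup continuity directly from continuity of $g$; a cutoff then glues this to a generic global extension. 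What each buys: the paper's truncation is conceptually minimal (no need to revisit the innards of Gagliardo's proof), but one must track that the successive clippings keep the trace equal to $g$ and that the sequence converges in $BV$. Your approach avoids any limiting procedure and gives the continuity in one clean ball-containment line; the price is that you must invoke the quantitative $BV$ bound for the averaged extension (which you correctly flag as the only hard step) and handle the product rule $D(\zeta u)=\zeta Du+u\nabla\zeta$ for $u\in BV$, $\zeta$ Lipschitz in the gluing. Both are sound; yours is arguably more transparent at $x_0$ and generalises immediately to any finite set of continuity points.
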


\begin{proof}
Since $\partial\Omega$ is Lipschitz, take any extension $\psi \in W^{1,1}(\mathbb{R}^N \backslash \overline{\Omega})$ given by the Gagliardo extension theorem. In particular, $\psi = g$ on $\partial\Omega$. Now, assume that $g$ is continuous at $x_0 \in \partial\Omega$. Fix a sequence of positive numbers $\varepsilon_n$ such that $\varepsilon_n \rightarrow 0$ as $n \rightarrow \infty$. For every $n$, there exists $\delta_n > 0$ such that for $x \in B(x,\delta_n) \cap \partial\Omega$ we have
\begin{equation*}
g(x_0) - \varepsilon_n \leq g(x) \leq g(x_0) + \varepsilon_n.
\end{equation*}
Then, we modify the function $\psi$ in the following way. Let $\psi_0 = \psi$. We set $\psi_n \in BV(\mathbb{R}^N \backslash \overline{\Omega})$ by the formula
\begin{equation*}
\psi_n(y) = \threepartdef{g(x_0) + \varepsilon_n}{\mbox{if } y \in B(x_0,\delta_n / 2) \mbox{ and } \psi(y) > g(x_0) + \varepsilon_n;}{g(x_0) - \varepsilon_n}{\mbox{if } y \in B(x_0,\delta_n / 2) \mbox{ and } \psi(y) < g(x_0) - \varepsilon_n;}{\psi_{n-1}(y)}{\mbox{otherwise.}}
\end{equation*}
Notice that $\psi_n = g$ on $\partial\Omega$. Moreover, $\psi_n$ converges in $BV(\mathbb{R}^N \backslash \overline{\Omega})$ to some $\widetilde{\psi} \in BV(\mathbb{R}^N \backslash \overline{\Omega})$; in particular, also $\widetilde{\psi} = g$ on $\partial\Omega$. Extracting a sequence which converges almost everywhere, by definition of $\psi_n$ it is immediate that $\widetilde{\psi}$ satisfies the statement of the Lemma.
\end{proof}

The second Lemma is a variant of \cite[Lemma 3.4]{JMN} taking into account the definition of the pointwise barrier condition at $x_0$. Here, $E^{(1)}$ is the set of points of density one of $E$. 

\begin{lemma}\label{lem:beforeexistence}
Let $\Omega \subset \mathbb{R}^N$ be an open bounded set with Lipschitz boundary which satisfies the pointwise barrier condition at $x_0 \in \partial\Omega$. Assume that $E \subset \mathbb{R}^N$ is $\phi$-area-minimising in $\Omega$. Then, if $x \in \partial E^{(1)}$, then for all $\varepsilon > 0$ we have
\begin{equation}
(B(x_0,\varepsilon) \cap \partial E^{(1)}) \backslash \overline{\Omega} \neq \emptyset.
\end{equation}
\end{lemma}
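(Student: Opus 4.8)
The statement says that if $\Omega$ satisfies the pointwise barrier condition at $x_0$ and $E$ is $\phi$-area-minimising in $\Omega$, then any boundary point $x \in \partial E^{(1)}$ forces the boundary $\partial E^{(1)}$ to reach outside $\overline{\Omega}$ within every ball $B(x_0,\varepsilon)$. My plan is to argue by contradiction, using the minimality of $E$ together with the fact that the barrier condition rules out competitor sets whose boundary touches $\partial\Omega$ inside $B(x_0,\varepsilon)$.

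First, I would fix $\varepsilon > 0$ small enough that the pointwise barrier condition applies (recall it is assumed only for sufficiently small $\varepsilon$), and suppose for contradiction that $(B(x_0,\varepsilon) \cap \partial E^{(1)}) \backslash \overline{\Omega} = \emptyset$, i.e. the entire portion of $\partial E^{(1)}$ inside $B(x_0,\varepsilon)$ is contained in $\overline{\Omega}$. The key idea, following the structure of \cite[Lemma 3.4]{JMN}, is to compare $E$ with the minimiser $V$ of the $\phi$-perimeter $P_\phi(\,\cdot\,;\mathbb{R}^N)$ over the class \eqref{eq:barrier}, namely sets $W \subset \Omega$ agreeing with $\Omega$ outside $B(x_0,\varepsilon)$. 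Since $E$ is $\phi$-area-minimising in $\Omega$, I would build a competitor for $E$ by modifying it inside $B(x_0,\varepsilon)$ so that it coincides with $\Omega$ (or more precisely with $V$) there; the assumption that $\partial E^{(1)}$ does not exit $\overline{\Omega}$ near $x_0$ is what lets me perform this surgery while keeping the competitor admissible and controlling the change in $\phi$-perimeter.

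The heart of the argument is to derive a contradiction with the barrier condition's conclusion $\partial V^{(1)} \cap \partial\Omega \cap B(x_0,\varepsilon) = \emptyset$. The plan is: using minimality of $E$ and of $V$, I would show that after cutting and pasting near $x_0$, the modified set must have its boundary meeting $\partial\Omega$ inside $B(x_0,\varepsilon)$ (because $x \in \partial E^{(1)}$ provides a genuine boundary piece there that cannot be removed without increasing $\phi$-perimeter), yet the barrier condition asserts the minimal such configuration $V$ has no boundary on $\partial\Omega$ in that ball. Carefully comparing the $\phi$-perimeters — exploiting that both $E$ and $V$ are minimisers in their respective classes and that the anisotropic perimeter is additive over disjoint regions and lower semicontinuous — should force the assumed configuration to be strictly beaten by a competitor, contradicting minimality of $E$.

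\textbf{Main obstacle.} The delicate step is the measure-theoretic bookkeeping of the cut-and-paste: one must control the $\phi$-perimeter contributions on $\partial B(x_0,\varepsilon)$ and on $\partial\Omega \cap B(x_0,\varepsilon)$ when gluing $E$ to $V$, ensuring no extra boundary is created along the sphere $\partial B(x_0,\varepsilon)$ that would spoil the inequality. This is exactly where the analogous difficulty arises in \cite[Lemma 3.4]{JMN}, and I expect to need to choose $\varepsilon$ (or a slightly smaller radius) so that the slicing of $E$ by spheres $\partial B(x_0,\rho)$ is well-behaved — for almost every $\rho$ the reduced boundary meets the sphere transversally and the perimeter of the spliced set splits cleanly. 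Handling the anisotropy $\phi$ rather than the Euclidean perimeter requires only that $\phi$ is comparable to the Euclidean norm and that $P_\phi$ enjoys the same additivity and lower-semicontinuity properties, so the structural argument of \cite{JMN} should transfer with the pointwise (rather than global) form of the barrier condition used to localise everything to the single point $x_0$.
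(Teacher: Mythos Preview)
Your overall strategy --- argue by contradiction, bring in the barrier minimiser $V$, and compare perimeters --- matches the paper's, but you route the contradiction through the wrong object. You aim to build a competitor for $E$ (gluing $V$ into $E$ near $x_0$, with all the attendant sphere-slicing bookkeeping) and contradict the minimality of $E$. The paper instead builds a competitor for the \emph{barrier problem}: it takes $V' = V \cup (E \cap \Omega)$, observes that $V'$ is admissible in \eqref{eq:barrier} (since $V' \subset \Omega$ and $V' \setminus B(x_0,\varepsilon) = \Omega \setminus B(x_0,\varepsilon)$), and invokes \cite[Lemma~2.5]{JMN} --- a general fact that unions with area-minimising sets preserve minimality --- to conclude that $V'$ is itself a minimiser for the barrier problem. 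But the hypothesis $B(x_0,\varepsilon) \cap \partial E^{(1)} \subset \overline{\Omega}$ forces $x_0 \in \partial V'^{(1)}$, which directly violates the pointwise barrier condition.

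The payoff is that the ``main obstacle'' you flag --- controlling perimeter on $\partial B(x_0,\varepsilon)$ when splicing, choosing a good radius $\rho$ for transversal slicing --- simply does not arise. No cut-and-paste on spheres is needed; all the hard work is packaged into \cite[Lemma~2.5]{JMN}, and the contradiction lands on the barrier condition rather than on the minimality of $E$. Your route may ultimately be made to work, but it is considerably more laborious and the intermediate inequalities you would need (strict improvement of a competitor over $E$) are not obviously available without further argument, whereas the paper's union trick is a one-liner once the JMN lemma is in hand.
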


In other words, it is impossible that for some $\varepsilon > 0$ we have $B(x_0,\varepsilon) \cap E^{(1)} \subset \overline{\Omega}$. In the isotropic case in two dimensions, so that boundaries of area-minimising sets are unions of line segments, this can be rephrased as follows: two connected components of $\partial E$ cannot intersect at a boundary point. 

\begin{proof}
First, notice that the problem of minimisation of $P_\phi(\cdot, \mathbb{R}^N)$ in \eqref{eq:barrier} always has a solution. Assume that $V_n$ is a minimising sequence in problem \eqref{eq:barrier}; then, the union $V = \bigcup_{n=1}^\infty V_n$ is also admissible and the claim follows by lower semicontinuity of $\phi$-total variation.

Assume otherwise: let $x_0 \in \partial\Omega \cap \partial E^{(1)}$ be such that $B(x_0,\varepsilon) \cap \partial E^{(1)} \subset \overline{\Omega}$ for some $\varepsilon > 0$. Let $V$ minimise $P_\phi(\cdot, \mathbb{R}^N)$ in \eqref{eq:barrier}. Then, by \cite[Lemma 2.5]{JMN} the set $V' = V \cup (E \cap \Omega)$ also minimises $P_\phi(\cdot, \mathbb{R}^N)$ in \eqref{eq:barrier}. However, $x_0 \in \partial V'^{(1)}$, which contradicts the barrier condition near $\Gamma$.
\end{proof}

The Theorem below is the main result of this Section. Here, we give pointwise result concerning the way in which the trace of the solution is attained at the boundary. The result is stated in a fairly general setting, including existence of solutions to problem \eqref{problem} for continuous boundary data. This Theorem is a generalisation of \cite[Theorem 1.1]{JMN} in two ways: the first part of this result concerns local properties of the solution around a continuity point of the boundary datum and is new even in the context of the standard anisotropic least gradient problem, i.e. when $\Gamma = \partial\Omega$ (also note that in that case the assumptions are simpler, because the Lipschitz extension property is automatically satisfied). The second part provides an existence result for the anisotropic least gradient problem in the case when $\Gamma \neq \partial\Omega$.

\begin{theorem}\label{thm:existencetracesense}
Let $\Omega \subset \mathbb{R}^N$ be an open bounded set with Lipschitz boundary. Let $\Gamma \subset \partial\Omega$ be relatively open and suppose that $\Omega$ satisfies the Lipschitz extension property near $\Gamma$. Then: \\
(1) Suppose that $\Omega$ satisfies the pointwise barrier condition at $x_0$. If $f \in L^1(\Gamma)$ is continuous at $x_0$ and $u \in BV(\Omega)$ is a solution to problem \eqref{eq:variationalsolution}, then $u(x_0) = f(x_0)$. Moreover, $u$ is continuous at $x_0$ in the sense that
\begin{equation*}
\lim_{r \rightarrow 0} \mathrm{ess \, sup}_{y \in B(x_0,r) \cap \Omega} \, |u(y) - f(x_0)| = 0.    
\end{equation*}
(2) Suppose that $\Omega$ satisfies the barrier condition near $\Gamma$. If $f \in L^1(\Gamma)$ is continuous $\mathcal{H}^{N-1}$-a.e. on $\Gamma$, then every solution to problem \eqref{eq:variationalsolution} is a solution to problem \eqref{problem}. In particular, there exists a solution to problem \eqref{problem}.
\end{theorem}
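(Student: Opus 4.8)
The plan is to reduce everything to the behaviour of the superlevel sets of $u$ and to import the geometric information encoded in the barrier condition through Lemma \ref{lem:beforeexistence}. Since $u$ solves \eqref{eq:variationalsolution}, it is a function of $\phi$-least gradient, so by Theorem \ref{thm:anisobgg} the sets $E_t = \{ u > t \}$ are $\phi$-minimal in $\Omega$ for almost every $t \in \mathbb{R}$. To be able to speak about the behaviour of these sets outside $\overline{\Omega}$, I would first extend the datum: applying Lemma \ref{lem:extension} to an arbitrary measurable extension to $\partial\Omega$ of $f$, I obtain $\psi \in BV(\mathbb{R}^N \setminus \overline{\Omega})$ with $\psi = f$ on $\partial\Omega$ which is continuous at $x_0$ in the essential-sup sense, with value $f(x_0)$. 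Gluing, I set $U = u$ in $\Omega$ and $U = \psi$ in $\mathbb{R}^N \setminus \overline{\Omega}$, and I consider the glued superlevel sets $E = \{ U > t \}$, whose intersection with $\Omega$ is exactly $E_t$ and is therefore $\phi$-area-minimising in $\Omega$ in the sense required by Lemma \ref{lem:beforeexistence}.

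For part (1), I would argue by contradiction. If the essential-sup continuity fails, then after possibly replacing $u, f, t$ by $-u, -f, -t$ (which is legitimate since $\phi$ is even in the second variable, so $\{ u < t \}$ is $\phi$-minimal whenever $\{ u > t \}$ is), there is $\delta > 0$ such that $\{ u > f(x_0) + \delta \}$ has positive Lebesgue measure in every ball $B(x_0, r)$. Fixing a level $t \in (f(x_0), f(x_0) + \delta)$ for which $E_t$ is $\phi$-minimal, the set $E$ then has density-one points in every ball around $x_0$, while continuity of $\psi$ forces $U = \psi < t$ almost everywhere on $B(x_0, \varepsilon) \setminus \overline{\Omega}$ for small $\varepsilon$, so $E$ is Lebesgue-null there; since $\partial\Omega$ is Lipschitz the exterior has positive density at $x_0$, and I conclude $x_0 \in \partial E^{(1)}$. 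Now Lemma \ref{lem:beforeexistence}, applied to the $\phi$-area-minimising set $E$ at the point $x_0$ where the pointwise barrier condition holds, yields $(B(x_0, \varepsilon) \cap \partial E^{(1)}) \setminus \overline{\Omega} \neq \emptyset$ for every $\varepsilon$. This contradicts the fact that $E$ is null on $B(x_0, \varepsilon) \setminus \overline{\Omega}$ and hence carries no essential-boundary points there. Therefore the essential-sup continuity holds, and in particular the trace of $u$ at $x_0$ equals $f(x_0)$.

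For part (2), the pointwise barrier condition holds at every point of $\Gamma$, and $f$ is continuous at $\mathcal{H}^{N-1}$-almost every $x_0 \in \Gamma$; applying part (1) at each such point shows that the trace of $u$ coincides with $f$ $\mathcal{H}^{N-1}$-almost everywhere on $\Gamma$. Combined with the fact that $u$ is a function of $\phi$-least gradient, this means $u$ is a solution to \eqref{problem}. Existence then follows because \eqref{eq:variationalsolution} always admits a solution: equivalently, $\mathcal{J}_\Gamma$ attains its minimum, by the lower semicontinuity from Proposition \ref{prop:lowersemicontinuity} together with the coercivity coming from $\phi \geq \lambda |\cdot|$, as recorded after Theorem \ref{thm:characterisation}.

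I expect the main obstacle to be the measure-theoretic bookkeeping at the boundary point in part (1): one must carefully translate the failure of essential-sup continuity into the assertion $x_0 \in \partial E^{(1)}$, and conversely extract from the continuity of the extension $\psi$ that $E$ carries no essential boundary outside $\overline{\Omega}$ near $x_0$, so that the conclusion of Lemma \ref{lem:beforeexistence} can be contradicted. The Lipschitz extension property near $\Gamma$ is what keeps the underlying variational framework and the relaxed functional $\mathcal{J}_\Gamma$ in force throughout, so that $u$ is genuinely a $\phi$-least gradient function to which Theorem \ref{thm:anisobgg} applies.
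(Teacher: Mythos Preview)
Your proposal is correct and follows essentially the same route as the paper: extend $f$ via Lemma~\ref{lem:extension}, use that a minimiser of $\mathcal{J}_\Gamma$ is a $\phi$-least gradient function so that Theorem~\ref{thm:anisobgg} applies to its superlevel sets, glue with $\psi$ outside $\Omega$, and derive a contradiction from Lemma~\ref{lem:beforeexistence} at the point $x_0$. The only cosmetic difference is that the paper phrases the gluing step through the auxiliary domain $\Omega'$ from Definition~\ref{dfn:lipschitzextension} (recasting $u$ as a minimiser on $\Omega'$ with $w=\psi$ fixed outside), whereas you argue directly in $\mathbb{R}^N$; both lead to the same invocation of Lemma~\ref{lem:beforeexistence}.
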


\begin{proof}
(1) Extend $f \in L^1(\Gamma)$ to a function on $L^1(\partial\Omega)$ by setting it equal to zero on $\partial\Omega\backslash\Gamma$ (without changing the notation). Let $\psi \in BV(\mathbb{R}^N \backslash \overline{\Omega})$ be the extension of $f$ given by Lemma \ref{lem:extension}. Let $\Omega'$ be the set given in Definition \ref{dfn:lipschitzextension}. Since $u$ is a solution to problem \eqref{eq:variationalsolution}, it is also a solution to the problem
\begin{equation*}
\min \bigg\{ \int_{\Omega'} |Dw|_\phi: \, w = \psi \mbox{ on } \Omega' \backslash \overline{\Omega} \, \bigg\}.
\end{equation*}
By Theorem \ref{thm:anisobgg}, superlevel sets are $\phi$-area-minimising in $\Omega$. Now, suppose that $u(x_0) \neq f(x_0)$; then, there exists $\delta > 0$ such that
\begin{equation}\label{eq:nowweusebarrier}
\lim_{r \rightarrow 0} \mathrm{ess \, sup}_{y \in B(x_0,r) \backslash \overline{\Omega}} \, (u(y) - f(x_0)) \geq \delta \mbox{   or   } \lim_{r \rightarrow 0} \mathrm{ess \, sup}_{y \in B(x_0,r) \backslash \overline{\Omega}} \, (f(x_0) - u(y)) \leq -\delta.    
\end{equation}
Assume that the first condition holds (the second one is handled similarly). Let $E = E_{f(x) + \delta /2}$. Notice that $x \in \partial E^{(1)}$; by equation \eqref{eq:nowweusebarrier}, we either have $x \in E^(1)$ or $x \in \partial E^(1)$, but the former possibility is excluded by our choice of $\psi$, because it satisfies the statement of Lemma \ref{lem:extension}. Now, we apply Lemma \ref{lem:beforeexistence} to conclude that for all $r > 0$ we have $(B(x_0,r) \cap \partial E^{(1)}) \backslash \overline{\Omega} \neq \emptyset$. We reach a contradiction, because Lemma \ref{lem:extension} implies that for sufficiently small balls centered at $x_0$ the extension $\psi$ only admits values smaller than $f(x) + \delta /4$ except for a set of level zero. \\
(2) This immediately follows from the first part: because $\Omega$ satisfies the Lipschitz extension property near $\Gamma$, problem \eqref{eq:variationalsolution} admits a solution. Let $u \in BV(\Omega)$ be a solution to problem \eqref{eq:variationalsolution}. Then, by point (1), we have $u = f$ $\mathcal{H}^{N-1}$-a.e. on $\Gamma$, so $u$ is a solution to problem \eqref{problem}. 
\end{proof}

\subsection{Regularity and structure of solutions}

Now, we turn to the issue of regularity and structure of solutions. Here, the situation is more complicated; even when $\Gamma = \partial\Omega$, then regularity of solutions depends on exact assumptions on $\phi$. Suppose that the boundary data are continuous. In the isotropic case, if the domain is strictly convex, solutions are continuous in $\overline{\Omega}$ in any dimension $N \geq 2$, see \cite{SWZ} (a similar result holds in the weighted case for sufficiently smooth weights, see \cite{Zun}). In the anisotropic case, if $\phi$ is uniformly convex and smooth enough, then solutions in dimensions two and three are continuous in $\overline{\Omega}$ (assuming that $\partial\Omega$ is connected), see \cite{JMN}. Moreover, under some additional assumptions H\"older regularity of boundary data implies H\"older regularity of solutions (with a worse exponent), see \cites{FM,Gor2020NA,SWZ}. 

When $\Gamma \neq \partial\Omega$, a few additional phenomena appear. To highlight them, we now present a series of examples. All the examples are isotropic, i.e. throughout this subsection we have $\phi(x,\xi) = |\xi|$; the loss of regularity of solutions compared to the standard least gradient problem is inherent to the case $\Gamma \neq \partial\Omega$ and is not caused by the lack of regularity or uniform convexity of the anisotropy.

The first example is very simple and it illustrates two issues. The first one is that in order to prove any regularity or uniqueness results for continuous boundary data, we need to assume that $\Gamma$ is connected.
The second is that even though we have a structure result (Corollary \ref{cor:singlevectorfield}) of the same type as for the full least gradient problem, in the case when $\Gamma \neq \partial\Omega$ it gives us far less information.

\begin{example}\label{ex:square}
Let $\Omega = (0,1)^2 \subset \mathbb{R}^2$. Let $\phi(x,\xi) = |\xi|$. Suppose that $\Gamma = \Gamma_0 \cup \Gamma_1$, where
\begin{equation*}
\Gamma_0 = \{ (x,y) \in \partial\Omega: \, y = 0 \}, \qquad\qquad \Gamma_1 = \{ (x,y) \in \partial\Omega: \, y = 1 \}.
\end{equation*}
Suppose that $f \in C(\Gamma)$ is as follows: $f \equiv 0$ on $\Gamma_0$ and $f \equiv 1$ on $\Gamma_1$. Then, any function $u(x,y) = u(y)$ which is an increasing function of $y$ such that $u(x,0)=0$ and $u(x,1)=1$ satisfies Definition \ref{dfn:1laplace} with the vector field $\mathbf{z} = (0,1)$. In particular, there exist multiple solutions to the least gradient problem and they may fail to be continuous inside $\Omega$.
\end{example}

This example has the virtue of simplicity, but one may point out that even though we lose continuity of some solutions, there exist solutions which are continuous (for instance $u(x,y)=y$). Moreover, $\Omega$ fails to satisfy the barrier condition near $\Gamma$. 

The second example is a version of the Brother-Marcellini example; for different versions of this example, see \cites{Mar,MRL,SZ}. We show that when $\Gamma$ is not connected, then even if domain is strictly convex (so the barrier condition is satisfied near the whole $\partial\Omega$), we may still lose continuity inside the domain and uniqueness of solutions. Moreover, unlike the previous example, there is no continuous solution to problem \eqref{problem}.

\begin{example}
Let $\Omega = B(0,1) \subset \mathbb{R}^2$. Let $\phi(x,\xi) = |\xi|$. Suppose that $\Gamma = \Gamma_0 \cup \Gamma_1$, where
\begin{equation*}
\Gamma_0 = \{ (x,y) \in \partial\Omega: \, y < - \frac{1}{\sqrt{2}} \}, \qquad\qquad \Gamma_1 = \{ (x,y) \in \partial\Omega: \, y > \frac{1}{\sqrt{2}} \}.
\end{equation*}
Suppose that $f \in C(\Gamma)$ is as follows: $f \equiv 0$ on $\Gamma_0$ and $f \equiv 1$ on $\Gamma_1$. Then, there exist multiple solutions to the least gradient problem and they are not continuous inside $\Omega$. Namely, for $\lambda \in [0,1]$ let $u_\lambda \in BV(\Omega)$ be defined by the formula
\begin{equation*}
u_\lambda(x,y) = \threepartdef{0}{\mbox{if } y < \frac{1}{\sqrt{2}};}{\lambda}{\mbox{if } -\frac{1}{\sqrt{2}} < y < \frac{1}{\sqrt{2}};}{1}{\mbox{if } y > \frac{1}{\sqrt{2}}.}
\end{equation*}
Then, $u_\lambda$ is a solution to problem \eqref{eq:variationalsolution}. To see this, take
\begin{equation}\label{eq:brotherexample}
\mathbf{z}(x,y) = \twopartdef{(0,1)}{\mbox{if } |x| < \frac{1}{\sqrt{2}}}{(0,0)}{\mbox{if } |x| > \frac{1}{\sqrt{2}}.}    
\end{equation}
Then, we have $\mathrm{div}(\mathbf{z}) = 0$, $(\mathbf{z}, Du_\lambda) = |Du_\lambda|$, $[\mathbf{z},\nu^\Omega] = 0$ on $\partial\Omega \backslash \Gamma$ and $u = f$ on $\Gamma$, so $\mathbf{z}$ satisfies all the conditions in Definition \ref{dfn:1laplace}. In particular, the solution is not unique and all the solutions $u_\lambda$ are discontinuous inside $\Omega$.

Furthermore, every solution to problem \eqref{problem} is of the form $u_\lambda$ for some $\lambda \in [0,1]$. We can see this in the following way: by Corollary \ref{cor:singlevectorfield}, any solution $u \in BV(\Omega)$ to problem \eqref{eq:variationalsolution} satisfies Definition \ref{dfn:1laplace} with the vector field $\mathbf{z}$. Hence, it is constant in $\{ (x,y) \in \Omega: |x| > \frac{1}{\sqrt{2}} \}$ and it is a function of only one variable, $u(x,y) = u(y)$, in $\{ (x,y) \in \Omega: |x| < \frac{1}{\sqrt{2}} \}$; moreover, it is increasing as a function of $y$. In this class of functions, in order for $u$ to minimise the functional $\mathcal{J}_\Gamma$, it has to be constant on the square $\{ (x,y) \in \Omega: |x|,|y| < \frac{1}{\sqrt{2}} \}$ and we easily see that it is of the form $u_\lambda$ for some $\lambda \in [0,1]$.
In particular, all the solutions to problem \eqref{eq:variationalsolution} are discontinuous inside $\Omega$.
\end{example}

The Example above serves as an illustration to Proposition \ref{prop:extendinggamma}: the vector field $\mathbf{z}$ given by formula \eqref{eq:brotherexample} also works in the classical least gradient problem, when $\Gamma = \partial\Omega$, with boundary datum equal to the trace of $u_\lambda$ (it satisfies Definition \ref{dfn:1laplace} with $\Gamma = \partial\Omega$). Moreover, in that case it also satisfies Definition \ref{dfn:1laplace} with the vector field $\mathbf{z}' = (0,1)$, but $\mathbf{z}'$ does not satisfy Definition \ref{dfn:1laplace} with $\Gamma$ as in the above Example.

Since in the case when $\Gamma$ is not connected we may lose continuity of solutions inside $\Omega$, we now turn our attention to the case when $\Gamma$ is connected; in two dimensions, this means that $\Gamma$ is an arc. Some analysis in the isotropic case, using a different method based on a variant of the Sternberg-Williams-Ziemer construction (see \cite{SWZ}), has been done in \cite{GRS2017NA}; in particular, it was observed that even for simplest boundary data, we lose continuity of solutions in $\overline{\Omega}$; discontinuities naturally form at the ends of $\Gamma$, see \cite[Theorem 3.2]{GRS2017NA}. This phenomenon is presented in the following Example, with special emphasis on how Definition \ref{dfn:1laplace} works in this case. 

\begin{example}\label{ex:arc}
Let $\Omega = B(0,1) \subset \mathbb{R}^2$. Let $\phi(x,\xi) = |\xi|$. We set
\begin{equation*}
\Gamma = \{ (x,y) \in \partial\Omega: \, y < 0 \}.
\end{equation*}
Take boundary data $f \in C(\Gamma)$ equal to $f(x,y) = x$. The situation is as regular as possible: $\Gamma$ is an arc, the boundary datum is smooth and it admits a Lipschitz continuous extension to $\overline{\Gamma}$.

Nonetheless, the solution is not continuous in $\overline{\Omega}$. We define $u$ in the following way: for any $t \in (0,1)$, we denote by $l_t$ the line segment between the points $(1,0)$ and $f^{-1}(t) = (t,-\sqrt{1-t^2})$. Similarly, for any $t \in (-1,0)$, we denote by $l_t$ the line segment between the points $(-1,0)$ and $f^{-1}(t) = (t,-\sqrt{1-t^2})$. Then, for $t \in (-1,1) \backslash \{ 0 \}$ we set $u$ to be equal to $t$ on $l_t$. Finally, we set $u$ to equal $0$ on the rest of the domain. The function $u$ constructed in this way is smooth in $\Omega$ and has two discontinuity points in $\overline{\Omega}$: $(-1,0)$ and $(1,0)$, the endpoints of $\Gamma$. 

Let us see that $u \in C^\infty(\Omega) \cap W^{1,1}(\Omega)$ constructed in this way is a solution to problem \eqref{problem}. Since the boundary data are continuous, by Theorem \ref{thm:existencetracesense} it suffices to check that it satisfies Definition \ref{dfn:1laplace}. To this end, divide $\Omega$ into four parts (and a set of codimension one) as follows:
\begin{equation*}
\Omega_1 = \{ (x,y) \in \Omega: \, y < |x| - 1 \}, \qquad \Omega_2 = \{ (x,y) \in \Omega: \, x < 0, \, - x - 1 < y < x + 1 \},
\end{equation*}
\begin{equation*}
\Omega_3 = \{ (x,y) \in \Omega: \, x > 0, \, x - 1 < y < 1 - x \},  \qquad  \Omega_4 = \{ (x,y) \in \Omega: \, y > 1 - |x| \}.
\end{equation*}
The situation is presented in Figure \ref{fig:rhombus}. We define the vector field $\mathbf{z} \in L^\infty(\Omega; \mathbb{R}^2)$ by the formula
\begin{equation*}
\mathbf{z}(x,y) = \left\{ \begin{array}{lll}
			\frac{\nabla u}{|\nabla u|} & \mbox{ in } \Omega_1; \\
			(\frac{1}{\sqrt{2}},\frac{1}{\sqrt{2}}) & \mbox{ in } \Omega_2; \\
			(\frac{1}{\sqrt{2}},-\frac{1}{\sqrt{2}}) & \mbox{ in } \Omega_3; \\
			(0,0) & \mbox{ in } \Omega_4.
		\end{array} \right.
\end{equation*}
Then, we have $\mathrm{div}(\mathbf{z}) = 0$, $(\mathbf{z},Du) = |Du|$, $[\mathbf{z},\nu^\Omega] = 0$ on $\partial\Omega\backslash\Gamma$ and $u = f$ on $\Gamma$, so $\mathbf{z}$ satisfies all the conditions in Definition \ref{dfn:1laplace}. Hence, $u$ is a solution to problem \eqref{problem}, which is not continuous in $\overline{\Omega}$. In particular, classical estimates regarding H\"older exponent for the solution (see \cite{SWZ}) cannot be true when $\Gamma \neq \partial\Omega$. However, note that the solution is continuous in $\Omega \cup \Gamma$; we will come back to this issue in the next subsection.

\begin{figure}
    \centering
    \includegraphics[scale=0.15]{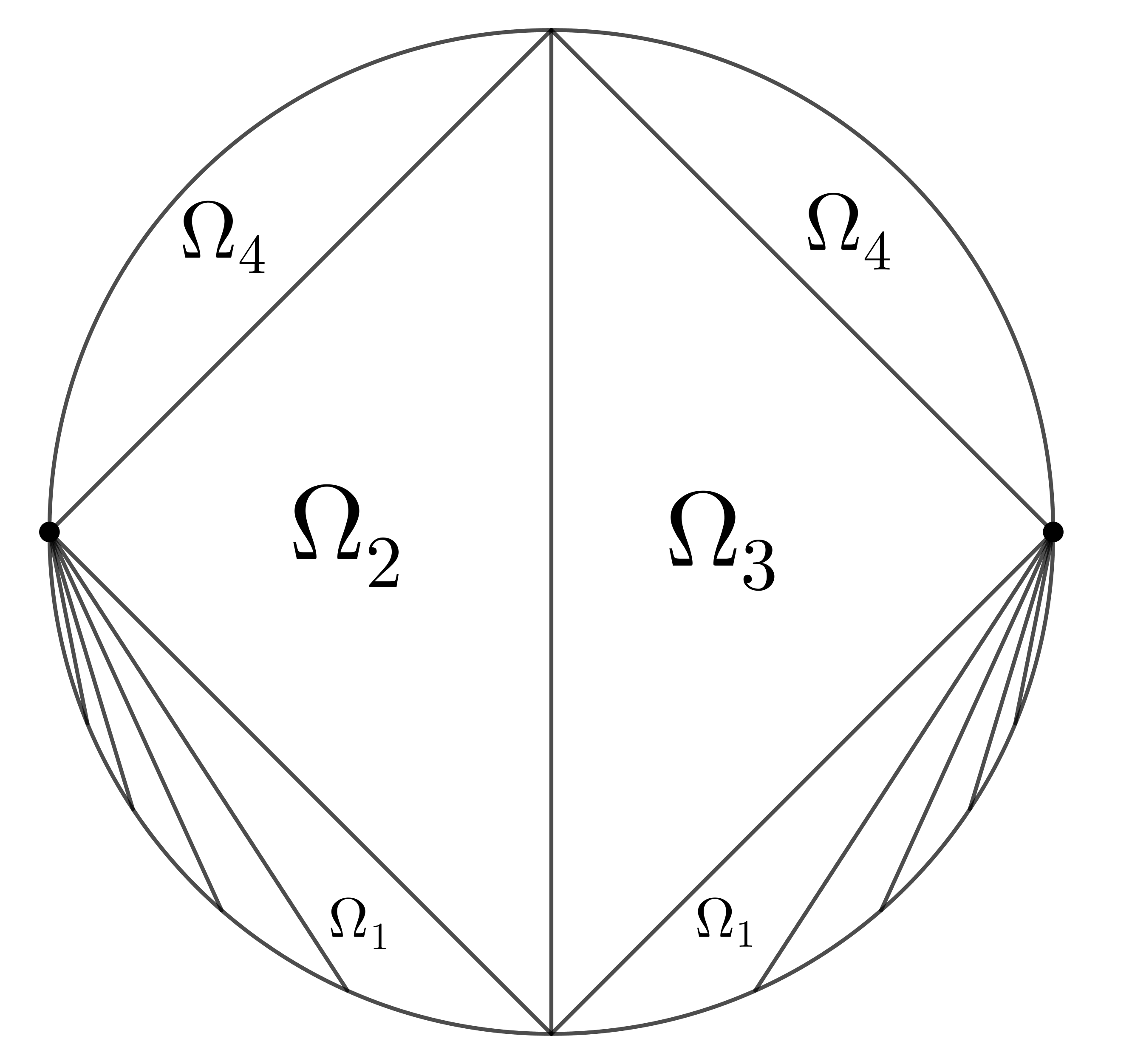} 
    \caption{Discontinuities at the ends of $\Gamma$}
    \label{fig:rhombus}
\end{figure}
\end{example}

In the previous Example, the discontinuity points were limited to points in $\partial\Gamma$. This need not be the case; depending on the shape of $\Omega$, discontinuities may also form in the interior of $\partial\Omega \backslash \Gamma$. This is presented in the following Example.

\begin{example}
Let $\Omega \subset \mathbb{R}^2$ be defined as follows:
\begin{equation*}
\Omega = \bigg\{ (x,y): \, y < 0, \, x^2 + y^2 < 1 \bigg\} \, \cup \, \bigg\{ (x,y): \, -1 < x < 0, \, 0 \leq y < \frac{1}{2\sqrt{3}} - \frac{1}{\sqrt{3}} \bigg|x + \frac{1}{2} \bigg| \bigg\} \, \cup \qquad\qquad
\end{equation*}
\begin{equation*}
\qquad\qquad\qquad\qquad\qquad\qquad\qquad\qquad\qquad\qquad \cup \, \bigg\{ (x,y): \, 0 < x < 1, \, 0 \leq y < \frac{1}{2\sqrt{3}} - \frac{1}{\sqrt{3}} \bigg|x - \frac{1}{2} \bigg| \bigg\}.
\end{equation*}
Let $\phi(x,\xi) = |\xi|$. We set $\Gamma = \{ (x,y) \in \partial\Omega: \, y < 0 \}$, so that $\Omega$ satisfies the barrier condition near $\Gamma$, and take boundary data $f \in C(\Gamma)$ equal to $f(x,y) = x$. The situation is similar to the previous Example, but the shape of the domain enforces a different structure of the solution.

We define $u$ in the following way: for any $t \in (\frac{1}{2},1)$, we denote by $l_t$ the line segment between the points $(1,0)$ and $f^{-1}(t) = (t,-\sqrt{1-t^2})$. Similarly, for any $t \in (-1,-\frac{1}{2})$, we denote by $l_t$ the line segment between the points $(-1,0)$ and $f^{-1}(t) = (t,-\sqrt{1-t^2})$. For $t \in (-\frac{1}{2},\frac{1}{2})$, we denote by $l_t$ the line segment between the points $(0,0)$ and $f^{-1}(t) = (t,-\sqrt{1-t^2})$. Then, for $t \in (-1,1) \backslash \{ -\frac{1}{2}, \frac{1}{2} \}$ we set $u$ to be equal to $t$ on $l_t$. Finally, we set $u$ to equal $\pm\frac{1}{2}$ on the remaining part of the domain, with positive sign when $x > 0$ and with negative sign when $x < 0$. The function $u$ constructed in this way is smooth in $\Omega$ and has three discontinuity points in $\overline{\Omega}$: $(-1,0)$, $(1,0)$ and $(0,0)$. 

Now, we check that $u \in C^\infty(\Omega) \cap W^{1,1}(\Omega)$ constructed in this way satisfies Definition \ref{dfn:1laplace}, so by Theorem \ref{thm:existencetracesense} it is a solution to problem \eqref{problem}. To this end, divide $\Omega$ into five parts (and a set of codimension one) as follows:
\begin{equation*}
\Omega_4 = \bigg\{ (x,y) \in \Omega: \, 0 < x < \frac{1}{2}, \, -\sqrt{3} x < y < \frac{1}{\sqrt{3}} x \bigg\}, \qquad \Omega_2 = \Omega_4 + (-1,0),
\end{equation*}
\begin{equation*}
\Omega_3 = \bigg\{ (x,y) \in \Omega: \, -\frac{1}{2} < x < 0, \, \sqrt{3} x < y < -\frac{1}{\sqrt{3}} x \bigg\}, \qquad \Omega_5 = \Omega_3 + (1,0).
\end{equation*}
Here, the notation $\Omega_i + (\pm 1,0)$ means simply that we shift the set $\Omega_i$ by the vector $(\pm 1,0)$. Finally, we set $\Omega_1 = \Omega \backslash (\Omega_2 \cup \Omega_3 \cup \Omega_4 \cup \Omega_5)$. The situation is presented in Figure \ref{fig:corner}. We define the vector field $\mathbf{z} \in L^\infty(\Omega; \mathbb{R}^2)$ by the formula
\begin{equation*}
\mathbf{z}(x,y) = \threepartdef{\frac{\nabla u}{|\nabla u|}}{\mbox{ in } \Omega_1;}{(\frac{\sqrt{3}}{2},\frac{1}{2})}{\mbox{ in } \Omega_2 \cup \Omega_4;}{(\frac{\sqrt{3}}{2},-\frac{1}{2} )}{\mbox{ in } \Omega_3 \cup \Omega_5.}
\end{equation*}
Such vector field $\mathbf{z}$ satisfies all the conditions in Definition \ref{dfn:1laplace}, so $u$ is a solution to problem \eqref{problem}. It is discontinuous not only at points in $\partial\Gamma$, but also in the interior of $\partial\Omega\backslash\Gamma$.

\begin{figure}
    \centering
    \includegraphics[scale=0.15]{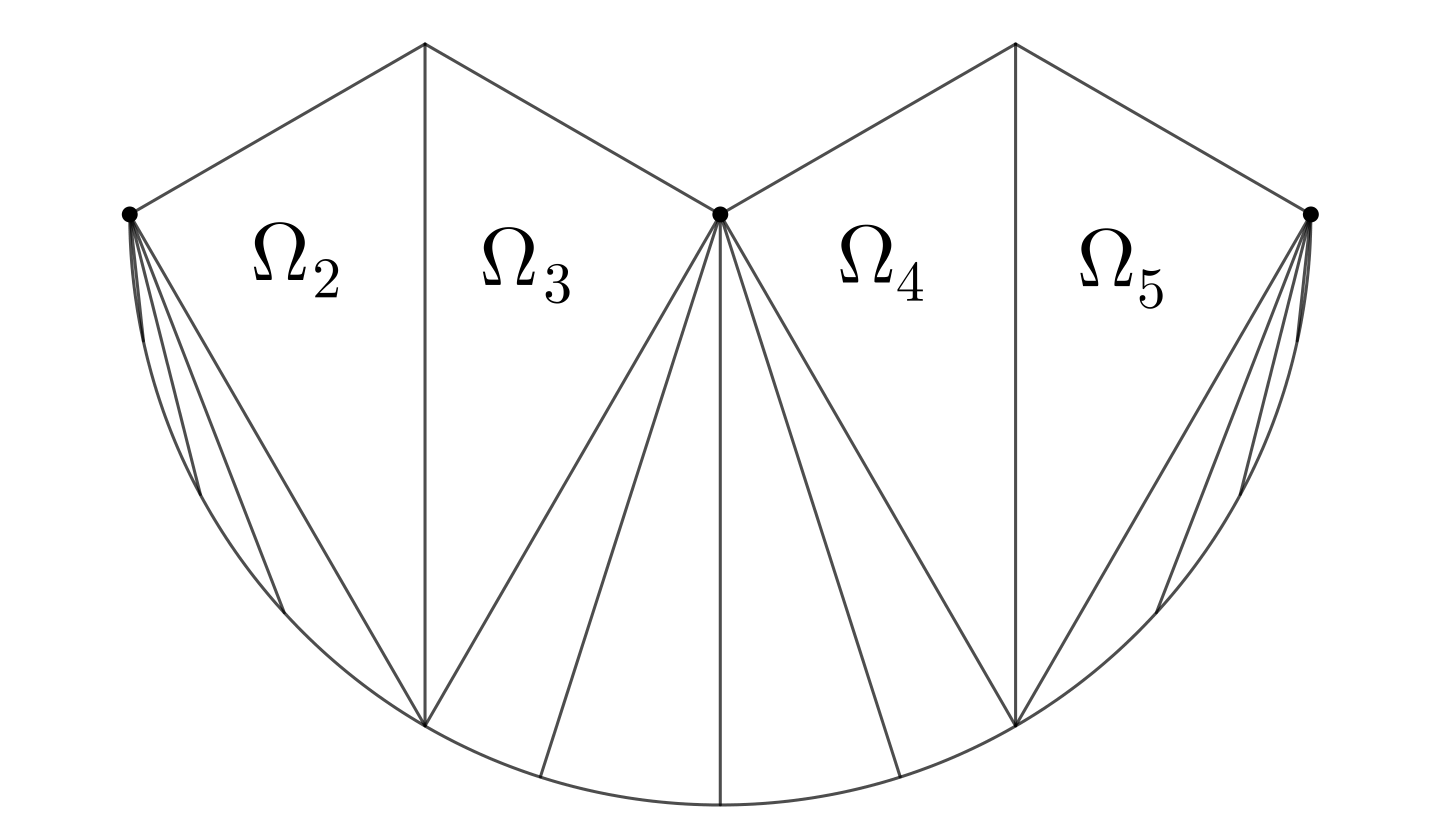} 
    \caption{Discontinuities may form in the interior of $\partial\Omega\backslash\Gamma$}
    \label{fig:corner}
\end{figure}
\end{example}

The final Example concerns the structure of solutions to problem \eqref{problem}. In the study of the least gradient problem, there is a well-known conjecture formulated by Maz\'on, Rossi and Segura de Le\'on in \cite{MRL} regarding the structure of solutions. It can be formulated as follows: even when the boundary datum is discontinuous, all the solutions share the same frame of superlevel sets. In the original version formulated by the authors, this was expressed by the fact that we can use the same vector field $\mathbf{z}$ in the $1$-Laplace formulation (as in Definition \ref{dfn:1laplace}); this has been since proved multiple times in different contexts, see \cites{FM,Mor} and in this paper in Corollary \ref{cor:singlevectorfield}. 

A stronger version of the conjecture, which gives some information about pointwise behaviour of solutions, was proved in \cite{Gor2018JMAA} for the isotropic least gradient problem in low dimensions. Namely, on convex domains two solutions of the least gradient problem agree except for a set on which they are both locally constant. However, even though the weaker version of the conjecture is true when $\Gamma \neq \partial\Omega$, the stronger version fails even in two dimensions. This is easy to see when $\Gamma$ is not connected, see Example \ref{ex:square}; however, the stronger version of the conjecture may fail even when the domain is strictly convex and $\Gamma$ is an arc. This is presented in the following Example.

\begin{example}\label{ex:structure}
Let $\Omega = \Omega_1 \cup \Omega_2 \cup \Omega_3 \subset \mathbb{R}^2$, where
$$ \Omega_1 = B((0,0),2) \cap \{ (x,y): \, x,y > 0 \}, $$
$$ \Omega_2 = B ((2,1),\sqrt{5}) \cap \{ (x,y): x \leq 0 \}, \qquad \Omega_3 = B((1,2),\sqrt{5}) \cap \{ (x,y): y \leq 0 \}. $$
Let $\phi(x,\xi) = |\xi|$. The domain is strictly convex, so it satisfies the barrier condition. Let
$$ \Gamma = \partial\Omega \cap (\{ (x,y): xy <0 \} \cup \{ (0,0) \}),$$
so $\Gamma$ is an arc, in particular it is connected. We define $f \in L^\infty(\Gamma)$ by the formula
\begin{equation*}
f(x,y) = \twopartdef{1}{\mbox{ if } x < 0;}{0}{\mbox{ if } x > 0.}
\end{equation*}
The boundary datum is continuous $\mathcal{H}^1$-a.e., so by Theorem \ref{thm:existencetracesense} there exists a solution to problem \eqref{problem}. 

Denote by $(r,\theta)$ the polar coordinates on $\mathbb{R}^2$. Assume that $u \in BV(\Omega)$ is of the form
\begin{equation*}
u(r,\theta) = \threepartdef{0}{\mbox{ if } \theta < 0;}{g(\theta)}{\mbox{ if } \theta \in [0,\frac{\pi}{2}];}{1}{\mbox{ if } \theta > \frac{\pi}{2},}
\end{equation*}
where $g: [0, \frac{\pi}{2}] \rightarrow [0,1]$ is an increasing function of $\theta$. Then, $u$ satisfies Definition \ref{dfn:1laplace} with the vector field
\begin{equation*}
\mathbf{z}(r,\theta) = \threepartdef{(0,1)}{\mbox{ if } \theta < 0;}{\hat{e}_\theta}{\mbox{ if } \theta \in [0,\frac{\pi}{2}];}{(-1,0)}{\mbox{ if } \theta > \frac{\pi}{2},}
\end{equation*}
where $\hat{e}_\theta$ is the unit vector in the direction $\theta$. In particular, there are uncountably many solutions to problem \eqref{problem} with different structure of level sets. The situation is presented in Figure \ref{fig:niejmaa}; all the boundaries of superlevel sets are line segments from $(0,0)$ to points in $\partial\Omega \backslash \Gamma$. 
\end{example}

We can modify the above Example in such a way that $\partial\Omega$ is smooth; failure of the structure theorem \cite[Theorem 1.1]{Gor2018JMAA} is caused only by the fact that the distance from the discontinuity point $(0,0)$ to any point in the Neumann part of the boundary $\partial\Omega\backslash\Gamma$ is the same.

\begin{figure}
    \centering
    \includegraphics[scale=0.25]{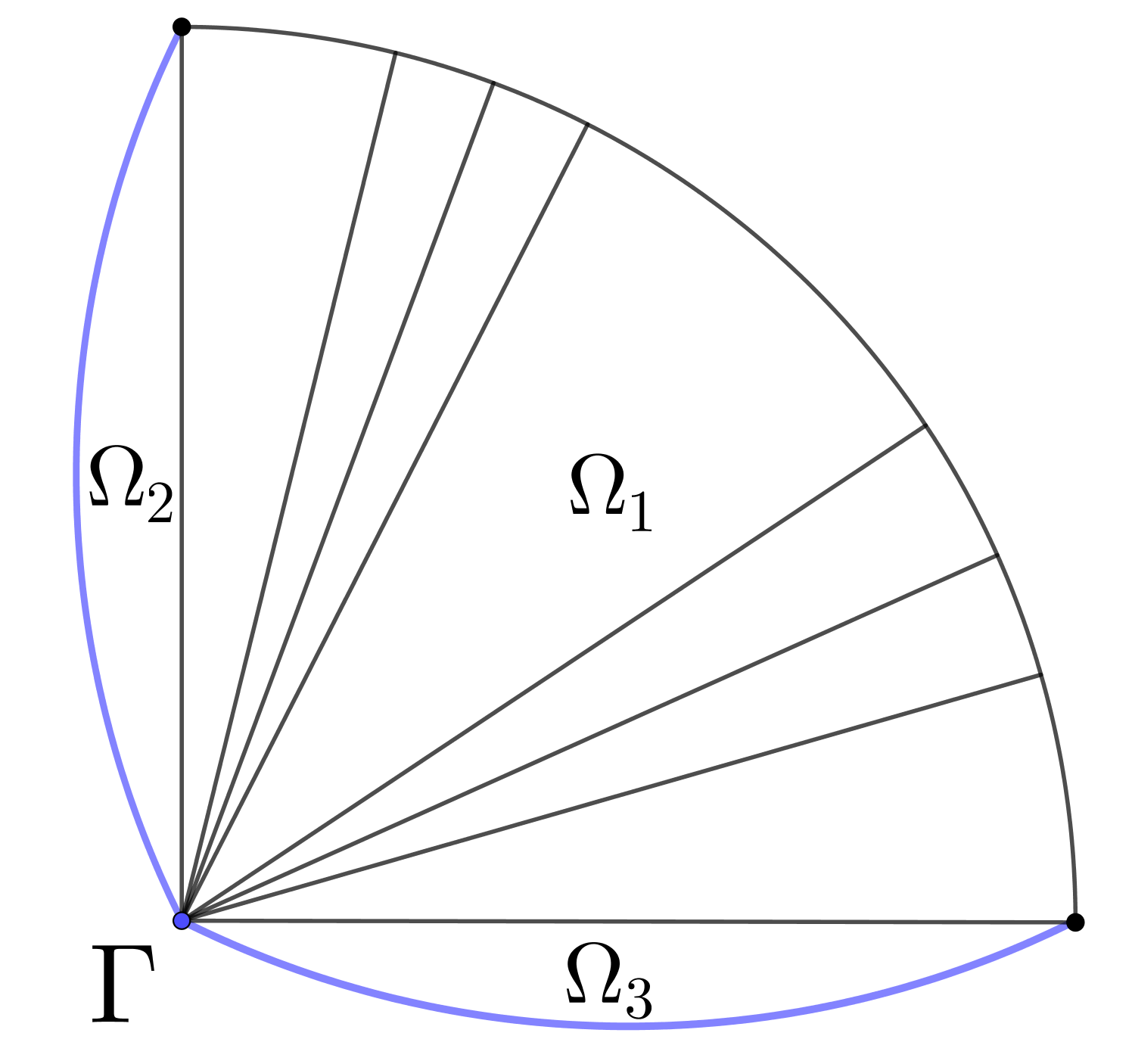} 
    \caption{Nonunique structure of solutions}
    \label{fig:niejmaa}
\end{figure}

We conclude this series of Examples by proving continuity of solutions in $\Omega \cup \Gamma$, when $\Gamma$ is connected and boundary data are continuous. In light of the analysis above, this is the optimal regularity result that we may obtain. Let us stress that the standard method of proving regularity used in the literature, using a comparison principle developed in \cite{JMN}, cannot be used here; in \cite{JMN} (see also \cite{FM}) continuity of solutions in low dimensions is obtained by extending a solution $u$ by a continuous function outside $\Omega$ and then applying \cite[Theorem 4.6]{JMN} to the superlevel sets $E_1 = \{ u \geq t \}$ and $E_2 = \{ u \geq s \}$. One of the hypotheses of \cite[Theorem 4.6]{JMN} is that $E_1 \backslash \Omega \subset \subset E_2 \backslash \Omega$; as we saw in Example \ref{ex:arc}, this hypothesis may fail even in the isotropic case when $\Gamma$ is a one-dimensional arc, see \ref{ex:arc}. A similar argument shows that we cannot apply the methods developed in \cite{JMN} to conclude uniqueness of solutions for continuous boundary data; nonetheless, using a different approach, some positive results about uniqueness of solutions in two dimensions in the isotropic case were proved in \cite{GRS2017NA}.

Nonetheless, under a bit more restrictive assumptions, we are able to prove continuity of solutions in $\Omega \cup \Gamma$ when $\Gamma$ is connected and $f$ is continuous. To this end, we will assume that a maximum principle for area-minimising surfaces holds, which is true for instance if $\phi(x,Du) = a(x)|Du|$ with sufficiently regular $a$. Before we give its precise statement, we first prove the regularity result or when $\phi$ is a strictly convex norm in two dimensions; then, connected $\phi$-minimal surfaces are line segments, which plays the role of the maximum principle. This proof will act as a model for the proof in the (higher-dimensional) weighted case.

\begin{theorem}\label{thm:continuitynorms}
Suppose that $\Omega \subset \mathbb{R}^2$ is an open bounded set with connected Lipschitz boundary. Let $\phi(x,\xi) = \| \xi \|$, where $\| \cdot \|$ is a strictly convex norm on $\mathbb{R}^2$. Suppose that $\Gamma \subset \partial\Omega$ is connected, i.e. it is an arc, and that $\Omega$ satisfies the barrier condition near $\Gamma$. Then, if $f \in C(\Gamma)$ and $u \in BV(\Omega)$ is a solution to problem \eqref{problem}, we have $u \in C(\Omega \cup \Gamma)$.
\end{theorem}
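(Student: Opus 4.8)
The plan is to separate boundary continuity from interior continuity and then exploit the rigidity of one-dimensional $\phi$-minimal surfaces. First, since $\Omega$ satisfies the barrier condition near $\Gamma$ and $f \in C(\Gamma)$, Theorem \ref{thm:existencetracesense}(1) already gives that $u$ is continuous at every point of $\Gamma$, in the sense that $\lim_{r \to 0} \mathrm{ess\,sup}_{B(x_0,r) \cap \Omega} |u(y) - f(x_0)| = 0$ for each $x_0 \in \Gamma$. Thus it remains to prove continuity at every interior point $x_0 \in \Omega$. For this I would pass to the level-set description: by Theorem \ref{thm:anisobgg}, for almost every $t$ the superlevel set $E_t = \{u \geq t\}$ is $\phi$-minimal, and since $\|\cdot\|$ is a strictly convex norm on $\mathbb{R}^2$ the connected components of $\partial E_t \cap \Omega$ are line segments (chords) whose endpoints lie on $\partial\Omega$. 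Moreover, by nestedness of the superlevel sets, chords coming from different levels cannot cross inside $\Omega$.

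The crucial structural observation is that an interior discontinuity can only be a jump across a single chord. Suppose $u$ fails to be approximately continuous at $x_0 \in \Omega$; then there are levels $t^- < t^+$ whose separating chords pass arbitrarily close to $x_0$. A "fan" of distinct chords through $x_0$ is impossible: two distinct chords through the interior point $x_0$ meet only at $x_0$, and two half-plane-type sets $E_t, E_s$ bounded near $x_0$ by distinct lines through $x_0$ cannot be nested unless they coincide, contradicting $E_s \subsetneq E_t$ for $s > t$. Hence all level chords through $x_0$ lie on one line, so the discontinuity is a jump across a single chord $\ell$ with $x_0$ in its relative interior and both endpoints $p, q \in \partial\Omega$. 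The chord $\ell$ splits $\Omega$ into two subdomains; writing them as the regions where $u \geq t^+$ and where $u \leq t^-$, the trace of $u$ onto $\ell$ is $\geq t^+$ from one side and $\leq t^-$ from the other.

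Next I would locate the endpoints $p, q$ using the boundary continuity already established. If, say, $p \in \Gamma$, then in every ball $B(p,r)$ both sides of $\ell$ meet $\Omega$ in positive measure, so $u$ takes values $\geq t^+$ and values $\leq t^-$ arbitrarily close to $p$; this is incompatible with $\mathrm{ess\,sup}_{B(p,r) \cap \Omega} |u - f(p)| \to 0$, since $t^+ - t^- > 0$. Therefore $p, q \in \partial\Omega \setminus \Gamma$. Now connectedness enters: as $\partial\Omega$ is a connected Lipschitz boundary in the plane it is a Jordan curve, and removing $p, q$ leaves two open arcs. Since $\Gamma$ is connected and contains neither $p$ nor $q$, it lies entirely in one of these arcs, say $A_1$; consequently the other arc $A_2$ is contained in the free part $\partial\Omega \setminus \Gamma$. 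Let $\Omega_2$ denote the component of $\Omega \setminus \ell$ bounded by $\ell$ and $A_2$; on $\partial\Omega_2 \cap \partial\Omega = A_2$ no Dirichlet condition is imposed.

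Finally I would reach a contradiction with minimality through a localized competitor. Define $v = u$ on $\Omega \setminus \Omega_2$ and let $v$ be constant on $\Omega_2$, equal to $t^+$ when $u \leq t^-$ on $\Omega_2$ (and to $t^-$ in the symmetric case). Since $\Gamma \subset A_1 \subset \partial(\Omega \setminus \Omega_2)$ and $v = u$ there, the trace of $v$ on $\Gamma$ still equals $f$, so $v$ is admissible for problem \eqref{problem}. Decomposing the total variation over $\Omega \setminus \Omega_2$, over $\Omega_2$, and over the interface $\ell$, the interior variation on $\Omega_2$ drops to zero while the jump of $v$ across $\ell$ is strictly smaller than that of $u$; a short computation then yields $\int_\Omega |Dv|_\phi \leq \int_\Omega |Du|_\phi - \int_\ell (t^+ - t^-)\, \phi(x, \nu_\ell)\, d\mathcal{H}^1 < \int_\Omega |Du|_\phi$, contradicting that $u$ solves \eqref{problem}. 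This rules out any interior jump chord, so $u$ is continuous in $\Omega$, and combined with the first step we obtain $u \in C(\Omega \cup \Gamma)$. I expect the main obstacle to be the structural step: rigorously excluding interior "fan" discontinuities and showing that any interior discontinuity is carried by a single chord reaching $\partial\Omega$, which is precisely where the two-dimensionality and the strict convexity of the norm are genuinely used.
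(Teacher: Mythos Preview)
Your approach is essentially the same as the paper's: boundary continuity via Theorem~\ref{thm:existencetracesense}(1); at an interior discontinuity point $x_0$ pick two levels $t<s$ with $x_0\in\partial E_t\cap\partial E_s$, observe that the connected components of $\partial E_t$ and $\partial E_s$ through $x_0$ are line segments that must coincide, use connectedness of $\Gamma$ and the Jordan curve theorem to see that one side $\Omega^-$ of this chord carries no Dirichlet data, and then replace $u$ by a constant on $\Omega^-$ to strictly lower the $\phi$-total variation. The paper takes the constant $(t+s)/2$ rather than your $t^\pm$, but the competitor computation is the same.

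Regarding the step you flag as the main obstacle: it is shorter than you suggest. You do not need to analyse a general ``fan''; once you know the two boundary components through $x_0$ are straight line segments (this is exactly where strict convexity of the norm in $\mathbb{R}^2$ enters), the inclusion $E_s\subset E_t$ forces them to lie on the same line---two distinct lines through $x_0$ would produce, in any neighbourhood of $x_0$, a sector contained in $E_s$ but not in $E_t$. So the discontinuity is automatically carried by a single chord, and the rest of your argument goes through as written.
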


\begin{proof}
Given $x \in \Omega$, denote by $u^{\wedge}(x)$ and $u^{\vee}(x)$ the upper and lower limits of $u$ at $x$, namely
\begin{equation*}
u^\wedge(x) = \lim_{r \rightarrow 0} \mbox{ess sup}_{y \in B(x,r)} \, u(y), \qquad  u^\vee(x) = \lim_{r \rightarrow 0} \mbox{ess inf}_{y \in B(x,r)} \, u(y).
\end{equation*}
Suppose that $u$ is not continuous at $x_0 \in \Omega$; then, $u^\wedge(x_0) > u^\vee(x_0)$, and in particular there exist $s,t \in \mathbb{R}$ such that
\begin{equation*}
u^\vee(x_0) < t < s < u^\wedge(x_0),
\end{equation*}
in particular $x_0 \in \partial E_t \cap \partial E_s$. Take the connected components $\gamma$ of $E_t$ and $\gamma'$ of $E_s$ passing through $x_0$; because $E_s \subset E_t$ and $\gamma$ and $\gamma'$ are line segments, they have to coincide. By the Jordan curve theorem, $\gamma$ separates $\Omega$ into two open connected sets $\Omega^+$ and $\Omega^-$. Let $p_1, p_2 \in \partial\Omega$ be the endpoints of $\gamma$. Because $f \in C(\Gamma)$, using part (1) of Theorem \ref{thm:existencetracesense} we see that $p_1, p_2 \notin \Gamma$; therefore, $\Gamma$ intersects the boundary of exactly one of the sets $\Omega^\pm$ in $\mathbb{R}^2$ (in fact, it lies in it entirely); we choose the notation so that $\Gamma \subset \partial\Omega^+$. Now, take a function $v \in BV(\Omega)$ defined by the formula
\begin{equation*}
v(x) = \twopartdef{u(x)}{x \in \Omega^+}{\frac{t + s}{2}}{x \in \Omega^-;}
\end{equation*}
note that since $\Gamma \subset \partial\Omega^+$, it also satisfies the boundary condition. Denote by $u^\pm$ the trace of $u$ on $\gamma$ from $\Omega^\pm$. Then,
\begin{equation*}
\int_\Omega |Dv|_\phi = \int_{\Omega^+} |Du|_\phi + \int_{\gamma} \phi(x,\nu^\gamma) \, \bigg| u^+ - \frac{t+s}{2} \bigg| \, d\mathcal{H}^{1} + \int_{\Omega^-} 0 < \qquad\qquad\qquad\qquad\qquad\qquad\qquad 
\end{equation*}
\begin{equation*}
\qquad\qquad\qquad\qquad\qquad\qquad < \int_{\Omega^+} |Du|_\phi + \int_{\gamma} \phi(x, \nu^\gamma) \, |u^+ - u^-| \, d\mathcal{H}^{1} + \int_{\Omega^-} |Du|_\phi = \int_\Omega |Du|_\phi,
\end{equation*}
so $v$ has strictly lower total variation than $u$. Hence, $u$ was not a solution to problem \eqref{problem}, contradiction. Finally, let us note that continuity at points in $\Gamma$ follows from Theorem \ref{thm:existencetracesense}.
\end{proof}

We now give a precise statement of the maximum principle for area-minimising surfaces; in this form, it was proved in \cite[Theorem 3.1]{Zun} (for an isotropic version, see \cite{SWZ}).

\begin{proposition}\label{prop:maximumprinciple}
Assume that $\Omega \subset \mathbb{R}^N$ is an open bounded set with Lipschitz boundary. Let $\phi(x,\xi) = a(x) |\xi|$, where $a \in C^2(\overline{\Omega})$ is bounded from below by a positive number. Suppose that $E_1 \subset E_2$ are $\phi$-minimal sets in $\Omega$. If $x \in \partial E_1 \cap \partial E_2 \cap \Omega$, then $\partial E_1$ and $\partial E_2$ agree in some neighbourhood of $x$.
\end{proposition}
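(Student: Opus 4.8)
The plan is to reduce the statement to a comparison principle for solutions of a single quasilinear elliptic equation --- namely the weighted minimal surface equation that both $\partial E_1$ and $\partial E_2$ satisfy --- and then to invoke the classical strong maximum principle of Hopf. The hypothesis $a \in C^2(\overline{\Omega})$ enters precisely at this point: it guarantees that the coefficients of the resulting equation are $C^1$, which is exactly what the linearisation step requires. Since the statement is quoted from \cite[Theorem 3.1]{Zun}, I would ultimately appeal to the regularity theory established there, but the conceptual core is the Hopf maximum principle applied to the weighted minimal surface equation.

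First I would record the Euler--Lagrange equation satisfied by $\phi$-minimal sets. Since $P_\phi(E, \Omega) = \int_{\partial^* E \cap \Omega} a(x) \, d\mathcal{H}^{N-1}$, computing the first variation under a normal perturbation with speed $\varphi$ shows that on the regular part of $\partial E$ one has
\begin{equation*}
a(x) \, H(x) + \nabla a(x) \cdot \nu(x) = 0,
\end{equation*}
where $H$ is the mean curvature and $\nu$ the outer unit normal. Writing the surface locally as a graph $\{ x_N = w(x') \}$ over a hyperplane, this becomes a quasilinear elliptic equation $Q[w] = 0$ whose coefficients depend on $x'$, $w$ and $\nabla w$ and are $C^1$ because $a \in C^2$.

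Next I would exploit the inclusion $E_1 \subset E_2$ near the touching point $x$. Because $E_1$ lies inside $E_2$ and their boundaries meet at $x$, the contact is necessarily tangential: $\partial E_1$ and $\partial E_2$ share the tangent plane and the normal at $x$, and in a small cylinder over that tangent plane they are graphs $w_1 \le w_2$ with $w_1 = w_2$ at the projection of $x$. Both $w_1$ and $w_2$ solve $Q[w_i] = 0$. Expressing $Q[w_2] - Q[w_1]$ as the integral of the derivative of $Q$ along the segment joining $w_1$ and $w_2$, the difference $v := w_2 - w_1 \ge 0$ solves a linear elliptic equation $Lv = 0$ with bounded coefficients. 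Since $v \ge 0$ attains its minimal value $0$ at an interior point, Hopf's strong maximum principle forces $v \equiv 0$ in a neighbourhood, so the two graphs --- and hence $\partial E_1$ and $\partial E_2$ --- coincide near $x$.

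The main obstacle is regularity at the touching point: both the graph representation and the derivation of $Q[w_i]=0$ require $\partial E_1$ and $\partial E_2$ to be smooth near $x$. For minimisers of the elliptic parametric integrand $a(x)|\xi|$, De Giorgi--type regularity theory yields smoothness away from a closed singular set of Hausdorff dimension at most $N-8$; in particular, when $N \le 7$ the boundaries are everywhere smooth and the argument above applies directly, with the touching point automatically regular. In higher dimensions one must either restrict attention to touching at regular points of both boundaries, or replace the graph-based Hopf argument by a measure-theoretic strong maximum principle valid up to the singular set, in the spirit of Simon's strict maximum principle for area-minimising hypersurfaces. This regularity issue is the only delicate point; the remainder is the standard ordered-solutions comparison for the weighted minimal surface equation.
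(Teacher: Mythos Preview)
The paper does not supply its own proof of this proposition: it is stated with the attribution ``in this form, it was proved in \cite[Theorem 3.1]{Zun} (for an isotropic version, see \cite{SWZ})'' and is then used as a black box in the proof of Theorem~\ref{thm:continuityweights}. So there is nothing to compare your argument against within the paper itself.

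That said, your sketch is exactly the standard route taken in \cite{Zun} and \cite{SWZ}: derive the weighted minimal surface equation $aH + \nabla a \cdot \nu = 0$, write the two ordered boundaries locally as graphs $w_1 \le w_2$ over the common tangent plane at the contact point, linearise the difference $Q[w_2]-Q[w_1]$ to obtain a linear uniformly elliptic equation for $v = w_2 - w_1 \ge 0$, and apply the Hopf strong maximum principle. Your identification of where $a \in C^2$ is used (to make the coefficients $C^1$ so that the linearisation is legitimate) is correct, and your discussion of the singular set is the right caveat. One small remark: the paper later quotes the singular set as having Hausdorff dimension at most $N-3$ rather than $N-8$; this is the bound available for the weighted/anisotropic case via the Schoen--Simon--Almgren theory invoked in \cite{JMN}, which is weaker than the classical isotropic bound but still sufficient for the density argument used in Theorem~\ref{thm:continuityweights}. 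Your proposal is sound and matches the cited source.
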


Now, we state the main regularity result, namely continuity of solutions in the weighted case when $\Gamma$ is connected. To this end, we will use Proposition \ref{prop:maximumprinciple}, but also some classical results in geometric measure theory for area-minimising integral currents, see \cite{Sim}. These originally refer to the isotropic case, but they have been improved to include the weighted case in \cite{JMN} and \cite{Zun}.

\begin{theorem}\label{thm:continuityweights}
Suppose that $\Omega \subset \mathbb{R}^N$ is an open bounded set with connected Lipschitz boundary. Let $\phi(x,\xi) = a(x) |\xi|$, where $a \in C^2(\overline{\Omega})$ is bounded from below by a positive number. Suppose that $\Gamma \subset \partial\Omega$ is connected and that $\Omega$ satisfies the barrier condition near $\Gamma$. Then, if $f \in C(\Gamma)$ and $u \in BV(\Omega)$ is a solution to problem \eqref{problem}, we have $u \in C(\Omega \cup \Gamma)$.
\end{theorem}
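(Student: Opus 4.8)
The plan is to run the contradiction argument of Theorem \ref{thm:continuitynorms} verbatim at the level of superlevel sets, but to replace its two planar inputs — that connected $\phi$-minimal boundaries are line segments, and that the Jordan curve theorem separates $\Omega$ — by the maximum principle of Proposition \ref{prop:maximumprinciple} together with the regularity theory for weighted area-minimising boundaries (\cite{Sim}, in the form adapted to the weighted case in \cite{JMN} and \cite{Zun}). First I would define $u^\wedge, u^\vee$ as in the earlier proof, assume $u$ is discontinuous at some $x_0 \in \Omega$, and pick levels with $u^\vee(x_0) < t < s < u^\wedge(x_0)$, so that $x_0 \in \partial E_t \cap \partial E_s \cap \Omega$. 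By Theorem \ref{thm:anisobgg} the superlevel sets $E_t \supset E_s$ are $\phi$-minimal, which puts us in the setting of Proposition \ref{prop:maximumprinciple}.

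Next I would upgrade the local coincidence at $x_0$ to a whole sheet of the boundary. Proposition \ref{prop:maximumprinciple} shows $\partial E_s$ and $\partial E_t$ agree near $x_0$; letting $\Sigma$ be the connected component of $\partial E_t \cap \Omega$ through $x_0$, the set of points of $\Sigma$ at which $\partial E_s = \partial E_t$ is relatively open (directly from the maximum principle) and relatively closed (a limit of coincidence points again lies in $\partial E_s \cap \partial E_t \cap \Omega$, where the maximum principle reapplies), hence all of $\Sigma$ by connectedness. Consequently $u$ jumps across $\Sigma$ directly from values $\le t$ on the $E_t^{(0)}$ side to values $\ge s$ on the common $E_s^{(1)} = E_t^{(1)}$ side, the intermediate region $\{t < u \le s\}$ not accumulating on $\Sigma$.

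I would then locate $\Sigma$ relative to the data. If a point $p \in \overline{\Sigma} \cap \partial\Omega$ belonged to $\Gamma$, continuity of $f$ at $p$ and part (1) of Theorem \ref{thm:existencetracesense} would force $\mathrm{ess\,sup}_{B(p,r)\cap\Omega}|u - f(p)| \to 0$, yet arbitrarily small balls about $p$ meet both sides of $\Sigma$, on which $u$ differs by at least $s-t$ — a contradiction. Hence $\overline{\Sigma}$ meets $\partial\Omega$ only in $\partial\Omega \setminus \Gamma$. Since $\Gamma$ is connected and disjoint from $\overline{\Sigma}$, it lies in a single connected component of $\Omega \setminus \overline{\Sigma}$; I would let $\Omega^-$ be the complementary component on the other side of $\Sigma$, so that its relative boundary in $\Omega$ lies in $\overline{\Sigma}$ and $\Gamma \not\subset \partial\Omega^-$. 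Defining $v$ to equal $u$ on $\Omega \setminus \Omega^-$ and the constant $\tfrac{s+t}{2}$ on $\Omega^-$, the competitor is admissible (its trace on $\Gamma$ is unchanged), and writing $u^+, u^-$ for the traces of $u$ on $\Sigma$ from the retained side and from $\Omega^-$ (one $\ge s$, the other $\le t$) the computation of Theorem \ref{thm:continuitynorms} gives
\[
\int_\Omega |Dv|_\phi = \int_{\Omega \setminus \Omega^-} |Du|_\phi + \int_{\Sigma} \phi(x,\nu^\Sigma)\, \Big| u^+ - \tfrac{s+t}{2} \Big|\, d\mathcal{H}^{N-1} < \int_\Omega |Du|_\phi,
\]
the strict inequality coming from $\big|u^+ - \tfrac{s+t}{2}\big| < |u^+ - u^-|$ and the discarded nonnegative term $\int_{\Omega^-}|Du|_\phi$. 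This contradicts $u$ being of $\phi$-least gradient, and continuity on $\Gamma$ itself is again supplied by Theorem \ref{thm:existencetracesense}.

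The hard part will be the separation step, which in the plane is free from the Jordan curve theorem once $\gamma$ joins two boundary points, but in $\mathbb{R}^N$ requires genuinely showing that the connected minimal sheet $\Sigma$ disconnects $\Omega$ with its two local sides in distinct components of $\Omega \setminus \overline{\Sigma}$. This is exactly where the connectedness of $\partial\Omega$ and the regularity theory enter: $\Sigma$ is a smooth two-sided hypersurface away from a singular set of Hausdorff dimension at most $N-8$, too small to reconnect the two sides, and as a component of $\partial E_t$ it carries $E_t$ on one side and its complement on the other, yielding the required separation. Making the "two sides lie in different components" claim rigorous, and controlling the trace of $\Sigma$ at $\partial\Omega$ and at its singular set, is the step I expect to demand the most care.
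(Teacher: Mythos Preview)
Your proposal is correct and follows essentially the same strategy as the paper's proof: assume a discontinuity, use the maximum principle to make two superlevel boundaries coincide along a connected sheet, argue that the sheet cannot touch $\Gamma$ by Theorem~\ref{thm:existencetracesense}(1), and build a strictly better competitor by flattening $u$ on the far side. The one substantive difference is in the separation step you flag as hard. The paper does not analyse connected components of $\Omega \setminus \overline{\Sigma}$; instead it invokes a structure result (as in \cite[Lemma 4.5]{JMN}, in the spirit of \cite[Theorem 27.6]{Sim}) which says directly that the connected component $\gamma$ of $\partial E_t$ through $x_0$ is the relative boundary in $\Omega$ of a set of finite perimeter $\Omega^+ \subset \Omega$, and then simply sets $\Omega^- = \Omega \setminus \Omega^+$. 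This sidesteps any topological argument about sides being in distinct components and makes no use of connectedness of the pieces. One small correction: in the weighted case with $a \in C^2$, the singular set of a $\phi$-area-minimising boundary has Hausdorff dimension at most $N-3$ (via the Schoen--Simon--Almgren estimates as adapted in \cite{JMN}), not $N-8$; the stronger bound is specific to the unweighted case.
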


\begin{proof}
Suppose that $u$ is not continuous at $x_0 \in \Omega$; as in the proof of Theorem \ref{thm:continuitynorms}, there exist $s,t \in \mathbb{R}$ such that
\begin{equation*}
u^\vee(x_0) < t < s < u^\wedge(x_0)
\end{equation*}
and so $x_0 \in \partial E_t \cap \partial E_s$. Take the connected components $\gamma$ of $E_t$ and $\gamma'$ of $E_s$ passing through $x_0$; because $E_s \subset E_t$, Proposition \ref{prop:maximumprinciple} implies that $\gamma$ and $\gamma'$ coincide. 

We recall that by the classical estimates by Schoen, Simon and Almgren (see the extended discussion in \cite{JMN}), the set of singular points of $\phi$-area-minimising boundaries has Hausdorff dimension at most $N-3$. Hence, the set of regular points of $\gamma$ is dense in $\gamma$. Let $R = \mathrm{reg}(\gamma)$ denote the set of regular points of $\gamma$. Using an argument as in the proof of \cite[Lemma 4.5]{JMN} (this is a standard proof in geometric measure theory, for the isotropic case see \cite[Theorem 27.6]{Sim}), we see that there exists a set of finite perimeter $\Omega^+ \subset \Omega$ such that $\gamma = \overline{\mathrm{reg}(\gamma)} = \partial \Omega^+$ (here, we mean the boundary of $\Omega^+$ relative to $\Omega$). We set $\Omega^- = \Omega \backslash \Omega^+$.

Notice that since $\Gamma$ is connected, it lies entirely in the boundary (in $\mathbb{R}^N$) of exactly one of the sets $\Omega^\pm$. Suppose otherwise; then, for some $x \in \Gamma$ we have $x \in \partial\Omega^+ \cap \partial\Omega^- = \overline{\gamma}$, but this is not possible by part (1) of Theorem \ref{thm:existencetracesense} because $f \in C(\Gamma)$. We choose the notation so that $\Gamma \subset \partial\Omega^+$. Now, take a function $v \in BV(\Omega)$ defined by the formula
\begin{equation*}
v(x) = \twopartdef{u(x)}{x \in \Omega^+}{\frac{t + s}{2}}{x \in \Omega^-;}
\end{equation*}
note that since $\Gamma \subset \partial\Omega^+$, it also satisfies the boundary condition. Denote by $u^\pm$ the trace of $u$ on $\gamma$ from $\Omega^\pm$. Then,
\begin{equation*}
\int_\Omega |Dv|_\phi = \int_{\Omega^+} |Du|_\phi + \int_{\gamma} \phi(x,\nu^\gamma) \, \bigg| u^+ - \frac{t+s}{2} \bigg| \, d\mathcal{H}^{N-1} + \int_{\Omega^-} 0 < \qquad\qquad\qquad\qquad\qquad\qquad\qquad 
\end{equation*}
\begin{equation*}
\qquad\qquad\qquad\qquad\qquad\qquad < \int_{\Omega^+} |Du|_\phi + \int_{\gamma} \phi(x, \nu^\gamma) \, |u^+ - u^-| \, d\mathcal{H}^{N-1} + \int_{\Omega^-} |Du|_\phi = \int_\Omega |Du|_\phi,
\end{equation*}
so $v$ has strictly lower total variation than $u$. Hence, $u$ was not a solution to problem \eqref{problem}, contradiction. Finally, let us note that continuity at points in $\Gamma$ follows from Theorem \ref{thm:existencetracesense}.
\end{proof}

{\bf Acknowledgements.} This work was partly supported by the research project no. 2017/27/N/ST1/02418, ``Anisotropic least gradient problem'', funded by the National Science Centre, Poland.

\bibliographystyle{plain}

\begin{bibdiv}
\begin{biblist}

\bib{AB}{article}{
      author={Amar, M.},
      author={Bellettini, G.},
       title={A notion of total variation depending on a metric with
  discontinuous coefficients},
        date={1994},
     journal={Ann. Inst. H. Poincar\'{e} Anal. Non Lin\'{e}aire},
      volume={11},
       pages={91\ndash 133},
}

\bib{AFP}{book}{
      author={Ambrosio, L.},
      author={Fusco, N.},
      author={Pallara, D.},
       title={Functions of bounded variation and free-discontinuity problems},
   publisher={Oxford Math. Monogr.},
     address={Oxford},
        date={2000},
}

\bib{ACM}{book}{
      author={Andreu, F.},
      author={Caselles, V.},
      author={Maz\'{o}n, J.M.},
       title={Parabolic quasilinear equations minimizing linear growth
  functionals},
   publisher={Birkh\"auser},
     address={Basel},
        date={2004},
}

\bib{Anz}{article}{
      author={Anzelotti, G.},
       title={Pairings between measures and bounded functions and compensated
  compactness},
        date={1983},
     journal={Ann. di Matematica Pura ed Appl. IV},
      volume={135},
       pages={293\ndash 318},
}

\bib{BeS}{article}{
      author={Bends{\o}e, M.P.},
      author={Sigmund, O.},
       title={Material interpolation schemes in topology optimization},
        date={1999},
     journal={Arch. Applied Mech.},
      number={69},
       pages={635\ndash 654},
}

\bib{BGG}{article}{
      author={Bombieri, E.},
      author={de~Giorgi, E.},
      author={Giusti, E.},
       title={Minimal cones and the {Bernstein} problem},
        date={1969},
     journal={Invent. Math.},
      volume={7},
       pages={243\ndash 268},
}

\bib{CFM}{article}{
      author={Caselles, V.},
      author={Facciolo, G.},
      author={Meinhardt, E.},
       title={Anisotropic {C}heeger sets and applications},
        date={2009},
     journal={SIAM J. Imaging Sciences},
      volume={2},
      number={4},
       pages={1211\ndash 1254},
}

\bib{CL}{article}{
      author={Czarnecki, S.},
      author={Lewi\'{n}ski, T.},
       title={A stress-based formulation of the free material design problem
  with the trace constraint and multiple load conditions},
        date={2014},
     journal={Struct. Multidiscip. Optim.},
      number={49},
       pages={707\ndash 731},
}

\bib{DS}{article}{
      author={Dweik, S.},
      author={Santambrogio, F.},
       title={{$L^p$} bounds for boundary-to-boundary transport densities, and
  {$W^{1,p}$} bounds for the {BV} least gradient problem in {2D}},
        date={2019},
     journal={Calc. Var. Partial Differential Equations},
      volume={58},
      number={1},
       pages={31},
}

\bib{ET}{book}{
      author={Ekeland, I.},
      author={Temam, R.},
       title={Convex analysis and variational problems},
   publisher={North-Holland Publ. Company},
     address={Amsterdam},
        date={1976},
}

\bib{FM}{article}{
      author={Fotouhi, M.},
      author={Moradifam, A.},
       title={General least gradient problems with obstacle},
        date={2019},
     journal={Calc. Var. Partial Differential Equations},
      volume={58},
       pages={182},
}

\bib{Gor2018JMAA}{article}{
      author={G\'{o}rny, W.},
       title={({Non})uniqueness of minimizers in the least gradient problem},
        date={2018},
     journal={J. Math. Anal. Appl.},
      volume={468},
       pages={913\ndash 938},
}

\bib{Gor2018CVPDE}{article}{
      author={G\'{o}rny, W.},
       title={Planar least gradient problem: existence, regularity and
  anisotropic case},
        date={2018},
     journal={Calc. Var. Partial Differential Equations},
      volume={57},
      number={4},
       pages={98},
}

\bib{Gor2019IUMJ}{article}{
      author={G\'{o}rny, W.},
       title={Existence of minimisers in the least gradient problem for general
  boundary data},
        date={2019},
     journal={Indiana Univ. Math. J.},
       pages={to appear},
}

\bib{Gor2020NA}{article}{
      author={G\'{o}rny, W.},
       title={Least gradient problem with respect to a non-strictly convex
  norm},
        date={2020},
     journal={Nonlinear Anal.},
      volume={200},
       pages={112049},
}

\bib{GRS2017NA}{article}{
      author={G\'{o}rny, W.},
      author={Rybka, P.},
      author={Sabra, A.},
       title={Special cases of the planar least gradient problem},
        date={2017},
     journal={Nonlinear Anal.},
      volume={151},
       pages={66\ndash 95},
}

\bib{HKLS}{article}{
      author={Hakkarainen, H.},
      author={Korte, R.},
      author={Lahti, P.},
      author={Shanmugalingam, N.},
       title={Stability and continuity of functions of least gradient},
        date={2014},
     journal={Anal. Geom. Metr. Spaces},
      volume={3},
       pages={123\ndash 139},
}

\bib{HaM}{article}{
      author={Hauer, D.},
      author={Maz\'on, J.M.},
       title={The {D}irichlet-to-{N}eumann operator for functions of least
  gradient and associated diffusion problems},
        date={2019},
     journal={arXiv:1910.12219},
}

\bib{JMN}{article}{
      author={Jerrard, R.L.},
      author={Moradifam, A.},
      author={Nachman, A.I.},
       title={Existence and uniqueness of minimizers of general least gradient
  problems},
        date={2018},
     journal={J. Reine Angew. Math.},
      volume={734},
       pages={71\ndash 97},
}

\bib{KS}{inproceedings}{
      author={Kohn, R.V.},
      author={Strang, S.},
       title={The constrained least gradient problem},
        date={1986},
   booktitle={Non-classical continuum mechanics. {P}roceedings of the {L}ondon
  {M}athematical {S}ociety {S}ymposium, {D}urham, {J}uly 1986},
      editor={Knops, R.J.},
      editor={Lacey, A.A.},
   publisher={Cambridge University Press},
     address={Cambridge},
       pages={226\ndash 243},
}

\bib{KoZ}{inproceedings}{
      author={Ko\v{c}vara, M.},
      author={Zowe, J.},
       title={Free material optimization: An overview},
        date={2002},
   booktitle={Trends in {I}ndustrial and {A}pplied {M}athematics, {P}roceedings
  of the 1st {I}nternational {C}onference on {I}ndustrial and {A}pplied
  {M}athematics of the {I}ndian {S}ubcontinent},
      editor={Ko\v{c}vara, M.},
      editor={Siddiqi, A.H.},
   publisher={Kluwer Acad. Publ.},
     address={Dordrecht},
       pages={181\ndash 215},
}

\bib{Mar}{inproceedings}{
      author={Marcellini, P.},
       title={A relation between existence of minima for non convex integrals
  and uniqueness for non strictly convex integrals of the calculus of
  variations},
        date={1983},
   booktitle={Mathematical {T}heories of {O}ptimization},
      editor={Cecconi, J.P.},
      editor={Zolezzi, T.},
   publisher={Springer-Verlag},
     address={Berlin},
       pages={216\ndash 231},
}

\bib{Maz}{article}{
      author={Maz\'on, J.M.},
       title={The {Euler}-{Lagrange} equation for the anisotropic least
  gradient problem},
        date={2016},
     journal={Nonlinear Anal. Real World Appl.},
      volume={31},
       pages={452\ndash 472},
}

\bib{MRL}{article}{
      author={Maz\'on, J.M.},
      author={Rossi, J.D.},
      author={Segura~de Le\'on, S.},
       title={Functions of least gradient and 1-harmonic functions},
        date={2014},
     journal={Indiana Univ. Math. J.},
      volume={63},
       pages={1067\ndash 1084},
}

\bib{Moll}{article}{
      author={Moll, S.},
       title={The anisotropic total variation flow},
        date={2006},
     journal={Math. Ann.},
      volume={332},
       pages={177\ndash 218},
}

\bib{Mor}{article}{
      author={Moradifam, A.},
       title={Existence and structure of minimizers of least gradient
  problems},
        date={2018},
     journal={Indiana Univ. Math. J.},
      volume={67},
      number={3},
       pages={1025\ndash 1037},
}

\bib{MNT}{article}{
      author={Moradifam, A.},
      author={Nachman, A.I.},
      author={Tamasan, A.},
       title={Uniqueness of minimizers of weighted least gradient problems
  arising in conductivity imaging},
        date={2018},
     journal={Calc. Var. Partial Differential Equations},
      volume={57},
      number={1},
       pages={6},
}

\bib{RS}{article}{
      author={Rybka, P.},
      author={Sabra, A.},
       title={The planar least gradient problem in convex domains, the case of
  continuous datum},
        date={2019},
     journal={arXiv:1911.08403},
}

\bib{San2015}{book}{
      author={Santambrogio, F.},
       title={Optimal transport for applied mathematicians},
      series={Progress in Nonlinear Differential Equations and Their
  Applications 87},
   publisher={Birkh\"auser},
     address={Basel},
        date={2015},
}

\bib{Sim}{book}{
      author={Simon, L.},
       title={Lectures on geometric measure theory},
   publisher={Proceedings of the Centre for Mathematical Analysis, Australian
  National University},
        date={1983},
}

\bib{ST}{article}{
      author={Spradlin, G.},
      author={Tamasan, A.},
       title={Not all traces on the circle come from functions of least
  gradient in the disk},
        date={2014},
     journal={Indiana Univ. Math. J.},
      volume={63},
       pages={1819\ndash 1837},
}

\bib{SWZ}{article}{
      author={Sternberg, P.},
      author={Williams, G.},
      author={Ziemer, W.P.},
       title={Existence, uniqueness, and regularity for functions of least
  gradient},
        date={1992},
     journal={J. Reine Angew. Math.},
      volume={430},
       pages={35\ndash 60},
}

\bib{SZ}{article}{
      author={Sternberg, P.},
      author={Ziemer, W.P.},
       title={Generalized motion by curvature with a {Dirichlet} condition},
        date={1994},
     journal={J. Differential Equations},
      volume={114},
       pages={580\ndash 600},
}

\bib{Zun}{article}{
      author={Zuniga, A.},
       title={Continuity of minimizers to the weighted least gradient
  problems},
        date={2019},
     journal={Nonlinear Analysis},
      volume={178},
       pages={86\ndash 109},
}

\end{biblist}
\end{bibdiv}

\end{document}